\definecolor{citecolor}{RGB}{200,0,0} 
\definecolor{linkcolor}{RGB}{0,0,250} 
\newtheorem{theorem}{Theorem}
\newtheorem{corollary}{Corollary}
\newtheorem{definition}{Definition}
\newtheorem{proposition}{Proposition}
\newtheorem{remark}{Remark}
\definecolor{codegreen}{rgb}{0,0.6,0}
\definecolor{codegray}{rgb}{0.5,0.5,0.5}
\definecolor{codepurple}{rgb}{0.58,0,0.82}
\definecolor{backcolour}{rgb}{0.95,0.95,0.92}
\lstdefinestyle{mystyle}{,
    commentstyle=\color{codegreen},
    keywordstyle=\color{magenta},
    numberstyle=\tiny\color{codegray},
    stringstyle=\color{codepurple},
    basicstyle=\ttfamily\footnotesize,
    breakatwhitespace=false,
    breaklines=true,
    captionpos=b,
    keepspaces=true,
    numbers=left,
    numbersep=5pt,
    showspaces=false,
    showstringspaces=false,
    showtabs=false,
    tabsize=2
}
\newenvironment{proof}[1][Proof]{\noindent\textbf{#1.} }{\ \rule{0.5em}{0.5em}}
\begin{document}

\title{Symmetric Truncated Freud polynomials}
\author{{Edmundo J. Huertas$^{1}$\orcidlink{0000-0001-6802-3303}}, {Alberto Lastra$^{1}$\orcidlink{0000-0002-4012-6471}}, {Francisco Marcell\'an$^{2}$\orcidlink{0000-0003-4331-4475}}, {V\'ictor Soto-Larrosa$^{1,3}$\orcidlink{0000-0002-7079-3646}} \\
%EndAName
\\
$^{1}$Departamento de F\'isica y Matem\'aticas, Universidad de Alcal\'a\\
Ctra. Madrid-Barcelona, Km. 33,600\\
28805 - Alcal\'a de Henares, Madrid, Spain\\
edmundo.huertas@uah.es, alberto.lastra@uah.es, v.soto@uah.es\\
\\
$^{2}$Departamento de Matem\'aticas, Universidad Carlos III de Madrid\\
Escuela Polit\'ecnica Superior\\
Av. Universidad 30 28911 Legan\'es, Spain\\
pacomarc@ing.uc3m.es \\
\\
$^{3}$Departamento de Biociencias, Universidad Europea de Madrid\\
Facultad de Ciencias Biom\'edicas y de la Salud\\
 C. Tajo, s/n, 28670 Villaviciosa de Od\'on, Madrid\\
victor.soto@universidadeuropea.es \\
\\
Corresponding author. Email: v.soto@uah.es}
\date{}
\maketitle

\begin{abstract}
We define the family of symmetric truncated Freud polynomials $P_n(x;z)$,
orthogonal with respect to the linear functional $\bm{u}$ defined by 
\begin{equation*}
\langle \bm{u}, p(x)\rangle = \displaystyle\int_{-z}^z p(x)e^{-x^4}dx, \quad
p\in \mathbb{P}, \quad z>0.
\end{equation*}
The semiclassical character of $P_n (x; z)$ as polynomials of class $4$ is
stated. As a consequence, several properties of $P_n (x; z)$ concerning the
coefficients $\gamma_n (z)$ in the three-term recurrence relation they
satisfy as well as the moments and the Stieltjes function of $\bm{u}$ are
studied. Ladder operators associated with such a linear functional and the
holonomic equation that the polynomials $P_n (x; z)$ satisfy are deduced.
Finally, an electrostatic interpretation of the zeros of such polynomials
and their dynamics in terms of the parameter $z$ are given.
\end{abstract}

\section{Introduction}

In the theory of orthogonal polynomials with respect to linear functionals,
a relevant family is constituted by the so-called semiclassical linear
functionals (\cite{Maroni85, Maroni87}). The concept of class allows to
introduce a hierarchy of linear functionals that constitutes an alternative
to the Askey tableau based on the hypergeometric character of the
corresponding sequence of orthogonal polynomials. Semiclassical orthogonal
polynomials appear in the seminal paper \cite{Shohat}, where weight
functions whose logarithmic derivatives are rational functions were
considered. A second order linear differential equation that the
corresponding sequence of orthogonal polynomials is obtained therein. The
theory of semiclassical orthogonal polynomials has been developed by P.
Maroni (see \cite{Maroni91}) and combine techniques of functional analysis,
distribution theory, $z$ and Fourier transforms, ordinary differential
equations, among others, in order to deduce algebraic and structural
properties of such polynomials. Many of the most popular families of
orthogonal polynomials coming from Mathematical Physics as Hermite,
Laguerre, Jacobi and Bessel are semiclassical (of class $s=0$) but other
families appearing in the literature are also semiclassical. For instance,
semiclassical families of class $s=1$ are described in \cite{Belmehdi92}. As
a sample in \cite{Diego} truncated Hermite polynomials associated with the
normal distribution supported on a symmetric interval of the real line have
been considered. They are semiclassical of class $s=2.$\newline

The coefficients of the three-term relation satisfied by orthogonal
polynomials associated with a semiclassical linear functional satisfy
coupled non linear difference equations (Laguerre-Freud equations or string
equations, see \cite{Belmehdi-Ronveaux}), which are related to discrete
Painlev\'e equations see \cite{Magnus99}, \cite{VAssche2018}, among others.
On the other hand, ladder operators (lowering and raising) are associated
with semiclassical orthogonal polynomials. Their construction is based on
the so called first structure relation, see \cite{Maroni91}, they satisfy.
As a consequence, you can deduce a second order linear differential equation
that the polynomials satisfy. Notice that the coefficients are polynomials
with degrees depending on the class.\newline

In the framework of random matrices, several authors have considered
Gaussian unitary ensembles with one or two discontinuities (see \cite{Lyu}).
Therein it is proved that the logarithmic derivative of the Hankel
determinants generated by the normal (Gaussian) weight with two jump
discontinuities in the scaled variables tends to the Hamiltonian of a
coupled Painlev\'e II system and it satisfies a second order PDE. The
asymptotics of the coefficients of the three-term recurrence relation for
the corresponding orthogonal polynomials is deduced. They are related to the
solutions of a coupled Painlev\'e II system. The techniques are based on the
analysis of ladder operators associated with such orthogonal polynomials.
For more information, see \cite{ChenIsmail97, ChenPruessner05}.\newline

Integrable systems also provide nice examples of semiclassical orthogonal
polynomials. They are related to time perturbations of a weight function
(Toda, Langmuir lattices). For more information, see \cite{WVAssche22}.%
\newline

The theory of orthogonal polynomials on unbounded intervals with respect to
linear functionals of the form

\begin{equation}
\int_{-\infty }^{\infty }p(x)|x|^{\rho }e^{-|x|^{m}}dx,\quad p\in \mathbb{P}%
,\quad \rho >-1,\quad m>0,  \label{eq:generalizedFreudLF}
\end{equation}%
started with Géza Freud in the 1970's (\cite{Freud76,Nevai86}). Indeed,
Freud studied the asymptotic behaviour of the recurrence coefficients of
orthogonal polynomials with respect to (\ref{eq:generalizedFreudLF}) for the
cases $m=2,4,6,$ and the corresponding Laguerre-Freud equations for the
recurrence coefficients \cite{Freud76}. Some of them were first obtained in
1885 by Laguerre \cite{Laguerre85} and in 1939 by Shohat \cite{Shohat}. They
are discrete analogues of the Painlevé equations \cite{Magnus99}.\newline
For the general case $m>0$, Freud conjectured the asymptotic behaviour of
the recurrence coefficients, which was proved in 1984 by Magnus \cite%
{Magnus85} when $m$ is an even integer. A more general proof of Freud's
conjecture was given in 1986 by Lubinsky, Mhaskar and Saff \cite{Lubinsky}
when $m>0$.\newline

In particular, if $(P^F_n(x))_{n\geq 0}$ is the sequence of monic
polynomials orthogonal with respect to the linear functional $\bm{u}_F$
defined by

\begin{equation}
\bigl<\bm{u}_F,p(x)\bigr> = \displaystyle\int_{-\infty}^\infty p(x)
e^{-x^4}, \quad p\in \mathbb{P},  \label{eq:standardfreudLF}
\end{equation}
then you have a semiclassical linear functional of class $s=2.$ The
corresponding sequence of orthogonal polynomials $P^F_n(x)$ satisfies a
three-term recurrence relation

\begin{equation*}
xP^F_{n}(x) = P^F_{n+1}(x) +\gamma^F_nP^F_{n-1}(x), \quad n \geq 0.
\end{equation*}
The Laguerre-Freud equation for the coefficients $\gamma^F_{n}$ reduces to
(see \cite{Bauldry88,Freud86,Kriecherbauer1999,Nevai83,Nevai86} among others)

\begin{equation}
4\gamma^F_n(\gamma^F_{n+1}+\gamma^F_n +\gamma^F_{n-1}) = n,\quad n\geq 1.
\label{eq:gammafreud}
\end{equation}

In \cite{Nevai83} P. Nevai proved that (\ref{eq:gammafreud}) has a unique
positive solution with initial conditions

\begin{equation*}
\gamma^F_0 =0, \quad \gamma^F_{1} = \frac{\bigl< \bm{u}_F,x^2\bigr>}{\bigl< %
\bm{u}_F,1\bigr>} = \frac{\Gamma(3/4)}{\Gamma(1/4)}.  \label{eq:IDF}
\end{equation*}
For a more general situation regarding the existence and uniqueness of
positive solutions of nonlinear recurrence relations related with Freud's
weights, see \cite{Alsulami2015}. \newline

In this contribution, we deal with \textit{symmetric truncated Freud
polynomials} $P_{n}(x;z)$, orthogonal with respect to the linear functional $%
\bm{u}$ defined by 
\begin{equation}
\bigl<\bm{u},p(x)\bigr>=\int_{-z}^{z}p(x)e^{-x^{4}}dx,\quad p\in \mathbb{P}%
,\quad z>0,  \label{eq:FreudLF}
\end{equation}%
and satisfying the three-term recurrence relation 
\begin{equation}
xP_{n}(x;z)=P_{n+1}(x;z)+\gamma _{n}(z)P_{n-1}(x;z),\quad n\geq 0.
\label{eq:3trrFreud}
\end{equation}%
Our choice of (\ref{eq:FreudLF}) is motivated by the observation that as $%
z\rightarrow \infty $, we get the Freud linear functional (\ref%
{eq:standardfreudLF}). We use the fact that the linear functional $\bm{u}$
in (\ref{eq:FreudLF}) is semiclassical. This approach provides a natural
framework for exploring the algebraic and analytic properties of the
coefficients involved in the three-term recurrence relation (\ref%
{eq:3trrFreud}). Semiclassical linear functionals were introduced by Pascal
Maroni in \cite{Maroni85,Maroni87}, with a comprehensive overview provided
in \cite{Maroni91}. Notice that the concept of the class of a semiclassical
linear functional enables the establishment of a hierarchy, wherein
classical functionals (such as Hermite, Laguerre, Jacobi, and Bessel) occupy
the bottom. Further details regarding the description of semiclassical
linear functionals of class one can be found in \cite{Belmehdi92}.\newline

The structure of the manuscript is as follows. In Section 2 we provide the
basic background concerning linear functionals and orthogonal polynomials,
focusing particularly on the symmetric and $D$-semiclassical cases. In
Section 3 we study some properties that the linear functional (\ref%
{eq:FreudLF}) satisfies, especially the Pearson equation as well as the
behaviour of the moments and the corresponding Stieltjes function. Section 4
is focused on the nonlinear relations (Laguerre-Freud equations) that the
coefficients $\gamma _{n}(z)$ in (\ref{eq:3trrFreud}) satisfy. In Section 5
we derive a structure relation and the lowering and raising operators
associated with such a linear functional which yield a second order linear
differential (holonomic) equation in $x$ for the polynomials $P_{n}(x;z)$.
In Section 6 we focus our attention on the electrostatic interpretation of
the zeros of the above orthogonal polynomials while their dynamical
behaviour in terms of the parameter $z$ is studied in Section 7. Finally, in
Section 8, some numerical examples regarding the zero behaviour of the
sequence $(P_{n}(x;z))_{n\geq 0}$ are illustrated.

\section{Background}

Let $\mathbb{N}=\{1,2,...\}$ and $\mathbb{N}_0 = \mathbb{N} \cup \{0\}$. Let 
$\mathbb{P}$ be the linear space of polynomials with complex coefficients.
For any linear functional $\bm{\ell}$ and polynomials $h,q\in\mathbb{P}$ let 
$D\bm{\ell}$ and $h(x)\bm{\ell}$ be the linear functionals respectively
defined by

\begin{enumerate}
\item[$i.$] $\bigl< D\bm{\ell},q(x)\bigr> := -\bigl< \bm{\ell},q^{\prime }(x)%
\bigr>$.

\item[$ii.$] $\bigl<h(x)\bm{\ell},q(x)\bigr>:=\bigl<\bm{\ell},h(x)q(x)\bigr>$%
.
\end{enumerate}

In particular, we denote by $\bm{\ell}_n:=\bigl< \bm{\ell},x^n\bigr> $, $%
n\geq 0$, the $n$-th moment of $\bm{\ell}$.\newline

\begin{definition}
A polynomial sequence $(P_n)_{n\geq 0}$ such that $deg (P_n) = n$ is said to
be a \textbf{monic} orthogonal polynomial sequence (MOPS, in short) with
respect to the linear functional $\bm{\ell}$ if

\begin{itemize}
\item[i.] The leading coefficient of $P_n$ is $1$.

\item[ii.] $\bigl<\bm{\ell},P_{n}P_{m}\bigr>=h_{n}\delta _{n,m},\quad n,m\in 
\mathbb{N}_{0},\quad h_{n}\neq 0,$\textup{\label{eq:OPS}}
\end{itemize}

%     \begin{equation}
%         \bigl< \bm{v}, P_nP_m \bigr> = h_n\delta_{n,k}, \quad n,m\in \mathbb{N}_0, \quad h_n\neq 0,
%         \label{eq:OPS}
%     \end{equation}
\end{definition}

where $\delta _{n,m}$ is the Kronecker delta. From (\ref{eq:OPS}) we see
that 
\begin{equation*}
\bigl<\bm{\ell},xP_{n}P_{m}\bigr>=0,\quad 0\leq m<n-1,
\end{equation*}%
and therefore the polynomials $P_{n}$ satisfy a \textit{three-term
recurrence relation} (TTRR, in short) \cite{Chihara78}%
\begin{equation}
xP_{n}(x)=P_{n+1}(x)+\beta _{n}P_{n}(x)+\gamma _{n}P_{n-1}(x),\quad n\geq 1,
\label{eq:3trr}
\end{equation}%
with initial values $P_{0}(x)=1$ and $P_{1}(x)=x-\beta _{0}$. From the
orthogonality condition (\ref{eq:OPS}) the coefficients $\beta _{n}$ and $%
\gamma _{n}$ are given by%
\begin{equation*}
\beta _{n}=\frac{\bigl<\bm{\ell},xP_{n}^{2}\bigr>}{h_{n}},\quad \gamma _{n}=%
\frac{\bigl<\bm{\ell},xP_{n}P_{n-1}\bigr>}{h_{n-1}},\quad n\geq 1,
\end{equation*}%
with%
\begin{equation*}
\beta _{0}=\frac{\bigl<\bm{\ell},x\bigr>}{\bigl<\bm{\ell},1\bigr>}=\frac{%
\bm{\ell}_{1}}{\bm{\ell}_{0}}.
\end{equation*}%
Taking into account that $P_{n}(x)=x^{n}+\mathcal{O}(x^{n-1})$ and using (%
\ref{eq:OPS}) we see that 
\begin{equation*}
h_{n}=\bigl<\bm{\ell},x^{n}P_{n}\bigr>=\bigl<\bm{\ell},xP_{n}P_{n-1}\bigr>%
=\gamma _{n}h_{n-1},
\end{equation*}%
and then 
\begin{equation}
\gamma _{n}=\frac{h_{n}}{h_{n-1}},\quad n\geq 1.  \label{eq:gammah_m}
\end{equation}%
The fact that $\gamma _{n}\neq 0$ for all $n\in \mathbb{N}$ is equivalent to
the nonsingularity of the Hankel matrices $\mathcal{H}_{n}=\bigl(\bm{\ell}%
_{i+j}\bigr)_{i,j\geq 0}^{n},n\geq 0,$ associated with the linear functional 
$\bm{\ell}$. The linear functional $\bm{\ell}$ is said to be quasi-definite
(resp. positive-definite) if every leading principal submatrix of the Hankel
matrix is nonsingular (resp. positive-definite).

\begin{definition}
({\cite{ArdilaMarcellan21, Maroni91}}) A quasi-definite linear functional $%
\bm{\ell}$ is semiclassical of class $\mathfrak{s}$ if and only if the
following statements hold

\begin{enumerate}
\item[i.] There exist non-zero polynomials $\phi(x)$ and $\psi(x)$ with $%
deg(\phi) \geq 0$ and $deg(\psi)\geq 1$ such that $\bm{\ell}$ satisfies the
distributional Pearson equation

\begin{equation}
D(\phi(x;z)\bm{\ell})+\psi(x;z)\bm{\ell}=0.  \label{eq:Pearson}
\end{equation}

\item[ii.] $\displaystyle\prod_{c: \phi(c) = 0}\bigl(\bigl|%
\Psi(c)+\phi^{\prime }(x)\bigr|+\bigl|\bigl< \bm{\ell},\theta_c\Psi(x)+%
\theta^2_c\phi(x)\bigr>\bigr|\bigr) >0,$ where $\theta_c f(x)=\frac{f(x)-f(c)%
}{x-c} $.
\end{enumerate}

Then the class of $\bm{\ell}$ is%
\begin{equation*}
\mathfrak{s}(\bm{\ell})=:\text{min max }\{\deg (\phi )-2,\deg (\psi )-1\},
\end{equation*}%
where the minimum is taken among all pairs of polynomials $\phi (x)$ and $%
\psi (x)$ so that (\ref{eq:Pearson}) holds.
\end{definition}

\begin{definition}
A linear functional $\bm{s}$ is said to be symmetric if all odd moments
vanish, i. e.,%
\begin{equation*}
\bigl<\bm{s},x^{2j+1}\bigr>=0,\quad j\geq 0.
\end{equation*}
\end{definition}

Symmetric functionals are characterized as follows

\begin{theorem}
({\cite[Theorem 4.3]{Chihara78}}) Let $\bigl(S_{n}\bigr)_{n\geq 0}$ be the
sequence of monic orthogonal polynomials with respect to the functional $%
\bm{s}$. Then, the following statements are equivalent

\begin{enumerate}
\item[i.] $\bm{s}$ is symmetric

\item[ii.] For all $n\in \mathbb{N}$, $S_n(x)$ has the same parity as $n$,
that is, 
\begin{equation}
S_n(-x) = (-1)^nS_n(x).  \label{eq:symmetry}
\end{equation}

\item[iii.] $(S_n)_{n\geq 0}$ satisfies a TTRR 
\begin{equation}
xS_n(x) = S_{n+1}(x)+\gamma_n S_{n-1}(x), \quad n \geq 0,  \label{eq:3trrSn}
\end{equation}
with initial conditions $S_{0}(x)=1$ and $S_{1}(x)=x$.
\end{enumerate}
\end{theorem}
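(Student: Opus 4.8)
The plan is to prove the three equivalences by establishing the cyclic chain of implications $i.\Rightarrow ii.\Rightarrow iii.\Rightarrow i.$ The only structural ingredient used repeatedly is the uniqueness of the monic orthogonal polynomial sequence of a quasi-definite functional: any monic polynomial of degree $n$ that is orthogonal, with respect to $\bm{s}$, to every polynomial of degree at most $n-1$ must coincide with $S_n$. Everything else is elementary manipulation of the pairing $\bigl<\bm{s},\cdot\bigr>$ and of the recurrence relation.

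For $i.\Rightarrow ii.$ I would first record the remark that symmetry of $\bm{s}$ is equivalent to $\bigl<\bm{s},p(x)\bigr>=\bigl<\bm{s},p(-x)\bigr>$ for every $p\in\mathbb{P}$: splitting $p$ into its even and odd parts, the odd part is annihilated by $\bm{s}$ on both sides while the even part is invariant under $x\mapsto -x$. Then I would introduce $Q_n(x):=(-1)^nS_n(-x)$, which is monic of degree $n$, and check that for $0\le m<n$,
\[
\bigl<\bm{s},Q_n(x)\,x^m\bigr>=(-1)^n\bigl<\bm{s},S_n(-x)\,x^m\bigr>=(-1)^{n+m}\bigl<\bm{s},S_n(x)\,x^m\bigr>=0,
\]
using the remark in the middle step and the orthogonality of $S_n$ in the last. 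By uniqueness of the MOPS, $Q_n=S_n$, which is exactly \eqref{eq:symmetry}.

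For $ii.\Rightarrow iii.$ I would take the general three-term recurrence \eqref{eq:3trr} satisfied by $(S_n)_{n\ge 0}$, substitute $x\mapsto -x$, apply \eqref{eq:symmetry} to each term, and multiply the resulting identity by $(-1)^n$; comparison with \eqref{eq:3trr} makes the coefficients of $S_{n+1}$ and $S_{n-1}$ agree automatically and forces $2\beta_nS_n(x)\equiv 0$, hence $\beta_n=0$ for all $n$. In particular $S_1(x)=x-\beta_0=x$, and \eqref{eq:3trr} collapses to \eqref{eq:3trrSn}. For $iii.\Rightarrow i.$ I would observe, by induction from the initial data $S_0=1$, $S_1=x$ and \eqref{eq:3trrSn}, that $S_n$ has the parity of $n$, so that $S_0,S_2,\dots,S_{2j}$ form a basis of the even polynomials of degree at most $2j$. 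Given $j\ge 0$, expand $x^{2j}=\sum_{k=0}^{j}c_kS_{2k}(x)$, multiply by $x$, and use \eqref{eq:3trrSn} to rewrite $x\cdot x^{2j}$ as a linear combination of the polynomials $S_{2k\pm 1}$, all of index at least $1$; pairing with $\bm{s}$ and using $\bigl<\bm{s},S_n\bigr>=0$ for $n\ge 1$ gives $\bigl<\bm{s},x^{2j+1}\bigr>=0$, so $\bm{s}$ is symmetric.

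The argument is essentially bookkeeping and I do not anticipate a genuine obstacle; the step most prone to slips is the last implication, where one must correctly track parities and verify that every index appearing after applying the recurrence is indeed $\ge 1$ (so that orthogonality to $S_0=1$ can be invoked). The one non-elementary fact, used in $i.\Rightarrow ii.$, is the uniqueness of the MOPS of a quasi-definite functional, which is standard and may be cited from \cite{Chihara78}.
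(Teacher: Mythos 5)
Your argument is correct: the chain $i.\Rightarrow ii.\Rightarrow iii.\Rightarrow i.$ goes through as written, including the parity bookkeeping in the last implication (the only point needing care is the $k=0$ term, where the convention $S_{-1}=0$ disposes of $\gamma_0 S_{-1}$). The paper states this result without proof, citing \cite[Theorem 4.3]{Chihara78}; your proof is essentially the standard one from that reference, resting on the uniqueness of the monic orthogonal polynomial of each degree for a quasi-definite functional.
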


An iteration of (\ref{eq:3trrFreud}) yields%
\begin{equation}
x^{2}P_{n}(x;z)=P_{n+2}(x;z)+(\gamma _{n+1}(z)+\gamma
_{n}(z))P_{n}(x;z)+\gamma _{n}(z)\gamma _{n-1}(z)P_{n-2}(x;z)
\label{eq:3trrSn2}
\end{equation}%
as well as%
\begin{align}
x^{3}P_{n}(x;z)& =P_{n+3}(x;z)+(\gamma _{n}(z)+\gamma _{n+1}(z)+\gamma
_{n+2}(z))P_{n+1}(x;z)  \notag \\
& \quad +\gamma _{n}(z)(\gamma _{n}(z)+\gamma _{n+1}(z)+\gamma
_{n-1}(z))P_{n-1}(x;z)  \label{eq:3trrSn3} \\
& \quad +(\gamma _{n-2}(z)\gamma _{n-1}(z)\gamma _{n}(z))P_{n-3}(x;z). 
\notag
\end{align}%
If we write%
\begin{equation}
P_{n}(x;z)=x^{n}+\lambda _{n,n-2}(z)x^{n-2}+\lambda _{n,n-4}(z)x^{n-4}+%
\mathcal{O}(x^{n-6}),  \label{eq:Sn}
\end{equation}%
with $\lambda _{n,k}(z)=0$ for all $k<n$, then from (\ref{eq:3trrSn}) and (%
\ref{eq:Sn}) we get%
\begin{equation*}
xP_{n}(x;z)=P_{n+1}(x;z)+(\lambda _{n+1,n-1}(z)+\gamma _{n}(z))x^{n-1}+%
\mathcal{O}(x^{n-3}),
\end{equation*}%
while from (\ref{eq:3trrSn2}) and (\ref{eq:Sn}) we obtain%
\begin{equation*}
x^{2}P_{n}(x;z)=x^{n+2}+[\lambda _{n+2,n}(z)+\gamma _{n}(z)+\gamma
_{n+1}(z)]x^{n}+\mathcal{O}(x^{n-2}).
\end{equation*}%
By multiplying (\ref{eq:Sn}) by $x$ and $x^{2}$, and comparing coefficients
of $x$, we have%
\begin{equation}
\left\{ 
\begin{array}{ll}
\gamma _{n}(z)=\lambda _{n,n-2}(z)-\lambda _{n+1,n-1}(z), &  \\ 
\gamma _{n}(z)+\gamma _{n+1}(z)=\lambda _{n,n-2}(z)-\lambda _{n+2,n}(z). & 
\end{array}%
\right.  \label{eq:sistgamma}
\end{equation}%
Moreover, using (\ref{eq:sistgamma}) 
\begin{align}
\partial _{x}P_{n}(x;z)& =nP_{n-1}(x;z)+\left( (n-2)\lambda
_{n,n-2}(z)-n\lambda _{n-1,n-3}(z)\right) P_{n-3}(x;z)+\mathcal{O}(x^{n-5}) 
\notag \\
& =nP_{n-1}(x;z)-\left( n\gamma _{n-1}(z)+2\lambda _{n,n-2}(z)\right)
P_{n-3}(x;z)+\mathcal{O}(x^{n-5}).  \label{eq:derivP}
\end{align}

\section{Truncated symmetric Freud linear functional}

In this section we study the truncated Freud linear functional $\bm{u}$
defined by (\ref{eq:FreudLF}), which is a $D-$semiclassical functional of
class $\bm{\mathfrak{s}}(\bm{u}) = 4$. In order to prove it, we will first
find the corresponding Pearson equation. Next, we will deduce a fourth-order
linear recurrence equation satisfied by its moments. As a straightforward
consequence we obtain a first-order non-homogeneous linear differential
equation that the Stieltjes function associated with $\bm{u}$ satisfies.

\subsection{Pearson equation}

\begin{proposition}
$\bm{u}$ satisfies the Pearson equation 
\begin{equation}
D(\phi(x;z)\bm{u})+\psi(x;z)\bm{u}=0 ,  \label{eq:PearsonFreud}
\end{equation}
with $\phi(x;z) = x^2-z^2$ and $\psi(x;z) = 4x^3(x^2-z^2)-2x$. As a
consequence, $\bm{u}$ is $D-$semiclassical of class four.
\end{proposition}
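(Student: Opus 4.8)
The plan is to verify the distributional Pearson equation (\ref{eq:PearsonFreud}) directly by testing it against an arbitrary polynomial $q\in\mathbb{P}$, and then to check the class condition (ii) of the definition of semiclassical functional. First I would compute $\bigl\langle D(\phi(x;z)\bm{u})+\psi(x;z)\bm{u},q(x)\bigr\rangle$ using the definitions $\bigl\langle D\bm{\ell},q\bigr\rangle=-\bigl\langle\bm{\ell},q'\bigr\rangle$ and $\bigl\langle h\bm{\ell},q\bigr\rangle=\bigl\langle\bm{\ell},hq\bigr\rangle$. With $\phi(x;z)=x^2-z^2$, this turns the first term into $-\int_{-z}^{z}(x^2-z^2)q'(x)e^{-x^4}dx$. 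The key computational step is integration by parts on this integral: since $x^2-z^2$ vanishes at both endpoints $\pm z$, the boundary terms disappear, and I obtain $\int_{-z}^{z}q(x)\,\frac{d}{dx}\!\left[(x^2-z^2)e^{-x^4}\right]dx=\int_{-z}^{z}q(x)\bigl(2x-4x^3(x^2-z^2)\bigr)e^{-x^4}dx$. Adding the second term $\bigl\langle\psi\bm{u},q\bigr\rangle=\int_{-z}^{z}q(x)\bigl(4x^3(x^2-z^2)-2x\bigr)e^{-x^4}dx$ then cancels everything, giving $0$ for every $q$, which establishes (\ref{eq:PearsonFreud}). The fact that the boundary terms vanish precisely because the chosen $\phi$ has its zeros at the endpoints of the support is the conceptual heart of the argument; this is the standard mechanism by which truncating a weight raises the class.

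Next I would address quasi-definiteness: since $e^{-x^4}>0$ on $[-z,z]$, the functional $\bm{u}$ is positive-definite (all Hankel determinants are positive as Gram determinants of the inner product $\langle f,g\rangle=\int_{-z}^{z}fg\,e^{-x^4}dx$), hence in particular quasi-definite, so the notion of class applies. Then I would compute the class. We have $\deg\phi=2$ and $\deg\psi=5$, so the pair $(\phi,\psi)$ gives $\max\{\deg\phi-2,\deg\psi-1\}=\max\{0,4\}=4$; thus $\mathfrak{s}(\bm{u})\le 4$. To show the class is exactly $4$, I must rule out the existence of a pair $(\tilde\phi,\tilde\psi)$ of lower class satisfying the same Pearson equation. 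By the standard reduction theory of semiclassical functionals, any other admissible pair is obtained from $(\phi,\psi)$ by simplification at a common root $c$ of $\phi$; here the candidate roots are $c=\pm z$. I would invoke condition (ii) of the definition: for reduction at $c$ to be possible one needs both $\psi(c)+\phi'(c)=0$ and $\bigl\langle\bm{u},\theta_c\psi+\theta_c^2\phi\bigr\rangle=0$. A short check shows $\phi'(x)=2x$ so $\phi'(\pm z)=\pm 2z$, while $\psi(\pm z)=4(\pm z)^3\cdot 0-2(\pm z)=\mp 2z$, hence $\psi(c)+\phi'(c)=0$ at both $c=\pm z$ — so the first factor vanishes and I genuinely need the second. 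The main obstacle is therefore evaluating the moment-type quantity $\bigl\langle\bm{u},\theta_c\psi(x)+\theta_c^2\phi(x)\bigr\rangle$ at $c=\pm z$ and showing it is nonzero; by the symmetry of $\bm{u}$ it suffices to do this for one of the two values and the computation reduces to a combination of the low-order moments $\bm{u}_0,\bm{u}_2,\bm{u}_4$ (which can be written via incomplete Gamma functions, but only their nonvanishing and sign are needed). Showing this quantity is nonzero is what pins the class down to exactly $4$; I expect it to follow from positivity of the moments once $\theta_c\psi+\theta_c^2\phi$ is expanded into an explicit polynomial in $x$.

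Finally, I would note that the product in condition (ii) ranges over $c\in\{-z,z\}$, and since each factor is strictly positive (the $|\psi(c)+\phi'(c)|$ part is $0$ but the second absolute value is positive by the previous paragraph, so each factor $|\psi(c)+\phi'(c)|+|\langle\bm{u},\theta_c\psi+\theta_c^2\phi\rangle|>0$), the admissibility condition (ii) holds and no further reduction of $(\phi,\psi)$ is possible. Combining $\mathfrak{s}(\bm{u})\le 4$ with the impossibility of reduction yields $\mathfrak{s}(\bm{u})=4$, completing the proof that $\bm{u}$ is $D$-semiclassical of class four.
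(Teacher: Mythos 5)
Your verification of the Pearson equation is exactly the paper's argument: integration by parts on $-\int_{-z}^{z}\phi(x;z)q'(x)e^{-x^4}dx$, with the boundary term killed because $\phi(\pm z;z)=0$. Where you go beyond the paper is the class computation: the paper simply asserts ``as a consequence, $\bm{u}$ is $D$-semiclassical of class four,'' whereas you correctly observe that $\max\{\deg\phi-2,\deg\psi-1\}=4$ only bounds the class from above, that $\psi(c)+\phi'(c)=0$ at both roots $c=\pm z$ of $\phi$, and that irreducibility therefore rests entirely on $\bigl\langle\bm{u},\theta_c\psi+\theta_c^2\phi\bigr\rangle\neq 0$. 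That is the right structure, and it is genuinely useful content the paper omits.

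The one step you leave open deserves a caveat: you say you expect the nonvanishing to ``follow from positivity of the moments,'' but it does not, because the relevant quantity is a \emph{difference} of moments. Explicitly, for $c=\pm z$ one finds $\theta_c\phi(x)=x\pm z$, $\theta_c^2\phi(x)=1$, and $\theta_c\psi(x)=4x^4\pm 4zx^3-2$, so that
\begin{equation*}
\bigl\langle\bm{u},\theta_c\psi(x)+\theta_c^2\phi(x)\bigr\rangle
=4\bm{u}_4\pm 4z\,\bm{u}_3-\bm{u}_0=4\bm{u}_4-\bm{u}_0 ,
\end{equation*}
using that odd moments vanish. Positivity of $\bm{u}_0$ and $\bm{u}_4$ alone cannot exclude $4\bm{u}_4=\bm{u}_0$; what settles it is the paper's moment identity (\ref{eq:recumomentsexp}) with $n=0$, which gives $4\bm{u}_4-\bm{u}_0=-2ze^{-z^4}\neq 0$ for $z>0$. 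With that one line supplied, your argument is complete and in fact more self-contained than the paper's.
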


\begin{proof}
Using integration by parts 
\begin{align*}
\bigl< D(\phi(x;z) \bm{u}),p(x)\bigr> =& -\bigl<\bm{u}, \phi(x;z)p^{\prime
}(x)\bigr> = - \displaystyle\int^z_{-z}\phi(x;z)p^{\prime -x^4}dx \\
=& \left.-\phi(x;z)p(x)e^{-x^4}\right|^z_{-z}+\displaystyle%
\int^z_{-z}p(x)\left(\phi^{\prime 3}\phi(x;z)\right)e^{-x^4}dx \\
=&\displaystyle\int_{-z}^zp(x)\left(2x-4x^3(x^2-z^2)\right)e^{-x^4}dx \\
=&-\bigl< \bm{u},\psi(x;z)p(x)\bigr>.
\end{align*}
\end{proof}

% \begin{remark}
%     For the linear functional related with the Freud polynomials, we have the Pearson equation
%     \begin{equation}
%         D(\bm{u}_F)+4x^3\bm{u}_F=0
%         \label{eq:PearsonFreud}
%     \end{equation}
%     multiplying (\ref{eq:PearsonFreud}) by $\phi(x,z)$, we get
% \end{remark}

\subsection{Moments}

From the definition of the linear functional $\bm{u}$, the moments $(\bm{u}%
_{n})_{n\geq 0}$ are given by 
\begin{equation*}
\bm{u}_{2n+1}=\displaystyle\int_{-z}^{z}x^{2n+1}e^{-x^{4}}dx=0,\quad n\geq 0,
\end{equation*}%
and setting $s=x^{4}$, from (\ref{eq:FreudLF}) one gets 
\begin{equation}
\bm{u}_{2n}=2\displaystyle\int_{0}^{z}x^{2n}e^{-x^{4}}dx=\frac{1}{2}%
\displaystyle\int_{0}^{z^{4}}s^{\frac{2n-3}{4}}e^{-s}ds=\frac{1}{2}\hat{%
\gamma}(\frac{2n+1}{4},z^{4}),\quad n\geq 0.  \label{eq:evenmomentsTF}
\end{equation}%
In general 
\begin{equation}
\bm{u}_{n}=\frac{\Delta _{n}}{4}\hat{\gamma}\bigl(\frac{n+1}{4},z^{4}\bigr)%
,\quad \Delta _{k}=1+(-1)^{k},  \label{eq:un}
\end{equation}%
where $\hat{\gamma}(a,z)$ is the lower incomplete gamma function defined by 
\begin{equation*}
\hat{\gamma}(a,z)=\displaystyle\int_{0}^{z}x^{a-1}e^{-x}dx.
\end{equation*}%
Its series representation is 
\begin{equation}
\hat{\gamma}(a,z)=\frac{z^{a}e^{-z}}{a}\displaystyle\sum_{k=0}^{\infty }%
\frac{z^{k}}{(a+1)_{k}},  \label{eq:inclowergammaS}
\end{equation}%
where $(c)_{k}$ is the Pochhammer symbol defined by 
\begin{equation*}
(c)_{k}=\displaystyle\prod_{j=0}^{k-1}(c+j),\quad k\geq 1,\quad (c)_{0}=1.
\end{equation*}%
\smallskip In particular, replacing (\ref{eq:inclowergammaS}) in (\ref%
{eq:evenmomentsTF}) we get%
\begin{equation*}
\bm{u}_{2n}=\frac{2}{{2n+1}}z^{2n+1}e^{-z^{4}}\sum_{k=0}^{\infty }\frac{%
z^{4k}}{(\frac{2n+5}{4})_{k}},
\end{equation*}%
and from the recurrence 
\begin{equation*}
\hat{\gamma}(a+1,z)=a\hat{\gamma}(a,z)-z^{a}e^{-z},
\end{equation*}%
we obtain 
\begin{equation}
4\bm{u}_{2n+4}=(2n+1)\bm{u}_{2n}-2z^{2n+1}e^{-z^{4}}.
\label{eq:recumomentsexp}
\end{equation}

From Pearson's equation (\ref{eq:PearsonFreud}), a fourth-term recurrence
relation for the moments can be deduced. Indeed,

\begin{proposition}
Let $(\bm{u}_{2n})_{n\geq 0}$ be the sequence of even moments of $\bm{u}$.
This sequence satisfies the recurrence 
\begin{equation}
4 \bm{u}_{2n+6} -4z^2\bm{u}_{2n+4}-(2n+3)\bm{u}_{2n+2}+(2n+1)z^2\bm{u}%
_{2n}=0,  \label{eq:recumoment}
\end{equation}
with initial conditions 
\begin{equation*}
\bm{u}_0 = \frac{1}{2}\hat{\gamma}(\frac{1}{4},z^4), \quad \bm{u}_2 = \frac{1%
}{2}\hat{\gamma}(\frac{3}{4},z^4),\quad \bm{u}_4 = \frac{\bm{u}_0}{4}-\frac{1%
}{2}ze^{z^4}.
\end{equation*}
\end{proposition}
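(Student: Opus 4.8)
The plan is to feed the Pearson equation (\ref{eq:PearsonFreud}) the monomials $x^{n}$ one at a time. With $\phi(x;z)=x^{2}-z^{2}$ and $\psi(x;z)=4x^{5}-4z^{2}x^{3}-2x$, the definitions of the operators $D$ and $h(x)\bm{\ell}$ from Section 2 give
\begin{equation*}
\bigl<D(\phi\bm{u}),x^{n}\bigr>=-n\bigl<\bm{u},(x^{2}-z^{2})x^{n-1}\bigr>=-n\bm{u}_{n+1}+nz^{2}\bm{u}_{n-1},
\end{equation*}
\begin{equation*}
\bigl<\psi\bm{u},x^{n}\bigr>=4\bm{u}_{n+5}-4z^{2}\bm{u}_{n+3}-2\bm{u}_{n+1},
\end{equation*}
so that (\ref{eq:PearsonFreud}) is equivalent to the five-term recurrence
\begin{equation*}
4\bm{u}_{n+5}-4z^{2}\bm{u}_{n+3}-(n+2)\bm{u}_{n+1}+nz^{2}\bm{u}_{n-1}=0,\qquad n\geq 1
\end{equation*}
(it is trivially $0=0$ when $n$ is even, since the odd moments of $\bm{u}$ vanish).

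Next I would set $n=2m+1$. Because $\bm{u}$ is symmetric, this substitution only involves even moments, and it collapses the five-term relation to the four-term recurrence (\ref{eq:recumoment}) (with $m$ renamed $n$). As an independent check one can instead start from the already-established relation (\ref{eq:recumomentsexp}): writing it at $n$ and at $n+1$, multiplying the former by $z^{2}$ and subtracting cancels the term $z^{2n+1}e^{-z^{4}}$ and leaves exactly (\ref{eq:recumoment}). For the initial data, $\bm{u}_{0}=\tfrac12\hat{\gamma}(\tfrac14,z^{4})$ and $\bm{u}_{2}=\tfrac12\hat{\gamma}(\tfrac34,z^{4})$ are read off directly from the closed form (\ref{eq:evenmomentsTF}), while taking $n=0$ in (\ref{eq:recumomentsexp}) gives $4\bm{u}_{4}=\bm{u}_{0}-2ze^{-z^{4}}$, i.e. the stated value of $\bm{u}_{4}$.

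I do not expect a genuine obstacle here: the argument is a short manipulation once (\ref{eq:PearsonFreud}) is in hand. The only two points to watch are that the integration by parts underlying (\ref{eq:PearsonFreud}) produces no boundary contribution — which holds precisely because $\phi(\pm z;z)=0$, as already used when deriving the Pearson equation — and that the index shifts together with the parity substitution $n=2m+1$ are tracked carefully so that the recurrence genuinely closes on the sublattice of even moments. The fact that the resulting recurrence has four terms is consistent with $\deg\phi=2$ and $\deg\psi=5$, hence with $\bm{u}$ being semiclassical of class $4$.
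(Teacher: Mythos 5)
Your proof is correct and follows essentially the same route as the paper: both test the Pearson equation against odd monomials and read off the four-term recurrence, your only cosmetic difference being that you first write the five-term relation for a general monomial $x^{n}$ and then restrict to the odd sublattice, and that you add an independent cross-check via (\ref{eq:recumomentsexp}). Note that your derivation of $\bm{u}_4=\bm{u}_0/4-\tfrac{1}{2}ze^{-z^4}$ from $n=0$ in (\ref{eq:recumomentsexp}) gives the correct value; the exponent $e^{z^4}$ appearing in the proposition's statement is a typo.
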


\begin{proof}
From (\ref{eq:PearsonFreud}), we have%
\begin{align*}
\bigl<D(\phi (x;z)\bm{u})+\psi (x;z)\bm{u},x^{2n+1}\bigr>& =-\bigl<\bm{u}%
,(2n+1)\phi (x;z)x^{2n}\bigr>+\bigl<\bm{u},\psi (x;z)x^{2n+1}\bigr> \\
& =-\bigl<\bm{u},(2n+1)(x^{2}-z^{2})x^{2n}\bigr>+\bigl<\bm{u}%
,(4x^{3}(x^{2}-z^{2})-2x)x^{2n+1}\bigr> \\
& =(2n+1)\bm{u}_{2n+2}+(2n+1)z^{2}\bm{u}_{2n}+4\bm{u}_{2n+6}-4z^{2}\bm{u}%
_{2n+4}-2\bm{u}_{2n+2} \\
& =4\bm{u}_{2n+6}-4z^{2}\bm{u}_{2n+4}-(2n+3)\bm{u}_{2n+2}+(2n+1)z^{2}\bm{u}%
_{2n}=0.
\end{align*}
\end{proof}

Finally, from the asymptotic expansion of the lower incomplete gamma function%
\begin{equation*}
\hat{\gamma}(a,z)\sim \Gamma (a)\left( 1-z^{a-1}e^{-z}\sum_{k=0}^{\infty }%
\frac{z^{-k}}{\Gamma (a-k)}\right) ,\quad z\rightarrow \infty ,
\end{equation*}%
and from the expression of the even moments of the Freud polynomials 
\begin{equation*}
\bm{u}_{2n}^{F}=\frac{1}{2}\displaystyle\int_{0}^{\infty }s^{\frac{2n-3}{4}%
e^{-s}ds}=\frac{1}{2}\Gamma (\frac{2n+1}{4}),
\end{equation*}%
we get the following asymptotic expansion for the ratio of moments 
\begin{equation*}
\frac{\bm{u}_{2n}}{\bm{u}_{2n}^{F}}\sim 1-e^{-z^{4}}\displaystyle%
\sum_{k=0}^{\infty }\frac{z^{2(n-2k)-3}}{\Gamma (\frac{2n+1}{4}-k)},\quad
z\rightarrow \infty .
\end{equation*}

\subsection{Stieltjes function}

\begin{definition}
The Stieltjes function associated with a linear functional $\bm{\ell}$ with
moments $(\bm{\ell}_{n})_{n\geq 0}$ is the formal series defined by 
\begin{equation*}
\mathbf{S}_{\bm{\ell}}(t)=\sum_{k=0}^{\infty }\frac{\bm{\ell}_{n}}{t^{n+1}}.
\end{equation*}
\end{definition}

\begin{proposition}
The Stieltjes function $\mathbf{S}_{\bm{u}}(t;z)$ associated with the linear
functional $\bm{u}$ satisfies the following first order non-homogeneous
linear ordinary differential equation 
\begin{equation}
\phi(t;z) \partial_t \mathbf{S}_{\bm{u}}(t;z) + [\phi ^{\prime
}(t;z)+\psi(t;z)]\mathbf{S}_{\bm{u}}(t;z) = \bm{u}_0(4t^4-1)+4\left(\bm{u}%
_2t^2+\bm{u}_4\right).  \label{eq:StieltjesODE}
\end{equation}
Here $\phi(t;z)$ and $\psi(t;z)$ are given in $(\ref{eq:Pearson})$.
\end{proposition}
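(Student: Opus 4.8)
The plan is to work with the formal Laurent series $\mathbf{S}_{\bm{u}}(t;z)=\sum_{n\geq 0}\bm{u}_{n}t^{-n-1}$ and to reduce (\ref{eq:StieltjesODE}) to the moment recurrence (\ref{eq:recumoment}), which is already available. First I would record the algebraic simplification $\phi'(t;z)+\psi(t;z)=2t+4t^{3}(t^{2}-z^{2})-2t=4t^{3}\phi(t;z)$, so that the left-hand side of (\ref{eq:StieltjesODE}) is $\phi(t;z)\bigl(\partial_{t}\mathbf{S}_{\bm{u}}(t;z)+4t^{3}\mathbf{S}_{\bm{u}}(t;z)\bigr)$; this is the form forced by $w'/w=-4x^{3}$ for the weight $w(x)=e^{-x^{4}}$.

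Then I would substitute $\partial_{t}\mathbf{S}_{\bm{u}}(t;z)=-\sum_{n\geq 0}(n+1)\bm{u}_{n}t^{-n-2}$, multiply by $\phi(t;z)=t^{2}-z^{2}$ and by $\phi'(t;z)+\psi(t;z)=4t^{5}-4z^{2}t^{3}$, and collect powers of $t$. Since the odd moments vanish, only even powers of $t$ appear. The non-negative powers $t^{4},t^{2},t^{0}$ produce a polynomial whose coefficients involve only $\bm{u}_{0},\bm{u}_{2},\bm{u}_{4}$ (using $\bm{u}_{1}=\bm{u}_{3}=0$), while the coefficient of $t^{-2m}$ for $m\geq 1$ works out to $4\bm{u}_{2m+4}-4z^{2}\bm{u}_{2m+2}-(2m+1)\bm{u}_{2m}+(2m-1)z^{2}\bm{u}_{2m-2}$, i.e. the left-hand side of (\ref{eq:recumoment}) at $n=m-1$, hence $0$. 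The polynomial right-hand side of (\ref{eq:StieltjesODE}) is what survives.

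A cleaner variant that avoids reindexing is to apply the distributional Pearson equation (\ref{eq:PearsonFreud}) to the kernel $(t-x)^{-1}$ with $\bm{u}$ acting on $x$. Using $\langle D(\phi(x;z)\bm{u}),(t-x)^{-1}\rangle=-\langle\bm{u}_{x},\phi(x;z)(t-x)^{-2}\rangle$ together with $\langle\bm{u}_{x},(t-x)^{-2}\rangle=-\partial_{t}\mathbf{S}_{\bm{u}}(t;z)$, the polynomial divisions $\tfrac{\phi(t;z)-\phi(x;z)}{t-x}=t+x$ and $\tfrac{\psi(t;z)-\psi(x;z)}{t-x}$ (a polynomial of degree $4$ in $x$), and the identity $(t+x)(t-x)^{-1}=-1+2t(t-x)^{-1}$, one rewrites $\langle\bm{u}_{x},\phi(x;z)(t-x)^{-2}\rangle$ and $\langle\bm{u}_{x},\psi(x;z)(t-x)^{-1}\rangle$ in terms of $\mathbf{S}_{\bm{u}}$, $\partial_{t}\mathbf{S}_{\bm{u}}$, $\bm{u}_{0}$ and the polynomial $\langle\bm{u}_{x},\tfrac{\psi(t;z)-\psi(x;z)}{t-x}\rangle$. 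Assembling the pieces gives (\ref{eq:StieltjesODE}) directly, with right-hand side $\langle\bm{u}_{x},\tfrac{\psi(t;z)-\psi(x;z)}{t-x}\rangle+\bm{u}_{0}$.

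There is no genuine obstacle: this is the standard passage from a Pearson equation to a first-order linear ODE for the Stieltjes function, and the only effort is the index bookkeeping and the handling of the finitely many low-order moments. The one point I would double-check while writing it up is the explicit $t^{2}$ and $t^{0}$ coefficients on the right, since this is exactly where the terms $z^{2}\bm{u}_{0}$ and $z^{2}\bm{u}_{2}$ coming from the $-4z^{2}t^{3}$ part of $\phi'(t;z)+\psi(t;z)$ enter.
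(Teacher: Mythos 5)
Your first variant is essentially the paper's own proof run in the opposite direction: the paper starts from the series for $\mathbf{S}_{\bm{u}}(t;z)$, substitutes the moment recurrence (\ref{eq:recumoment}) into $\sum_{n\geq 1}\bm{u}_{2n+4}t^{-(2n+5)}$ and resums, whereas you substitute the series into the left-hand side of (\ref{eq:StieltjesODE}) and check that the coefficient of each $t^{-2m}$, $m\geq 1$, is an instance of (\ref{eq:recumoment}); the content is identical. Your Cauchy-kernel variant is a genuinely different (and cleaner) route: it bypasses the moment recurrence entirely and produces the right-hand side in closed form as $\bm{u}_0+\bigl\langle \bm{u},\tfrac{\psi(t;z)-\psi(x;z)}{t-x}\bigr\rangle$, with the functional acting in $x$. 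Both plans are sound.

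The point you flag for double-checking is, however, exactly where a discrepancy appears, and you must follow through on it. Carrying out either computation, the coefficient of $t^{2}$ on the left is $4\bm{u}_2-4z^2\bm{u}_0$ (the $n=1$ term of $4t^{5}\mathbf{S}_{\bm{u}}$ plus the $n=0$ term of $-4z^{2}t^{3}\mathbf{S}_{\bm{u}}$) and the coefficient of $t^{0}$ is $4\bm{u}_4-4z^2\bm{u}_2-\bm{u}_0$; equivalently, $\bigl\langle \bm{u},\tfrac{\psi(t;z)-\psi(x;z)}{t-x}\bigr\rangle = 4t^4\bm{u}_0+4t^2\bm{u}_2+4\bm{u}_4-4z^2t^2\bm{u}_0-4z^2\bm{u}_2-2\bm{u}_0$. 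Hence the right-hand side you will obtain is $\bm{u}_0(4t^4-1)+4(\bm{u}_2-z^2\bm{u}_0)t^2+4(\bm{u}_4-z^2\bm{u}_2)$, not the displayed $\bm{u}_0(4t^4-1)+4(\bm{u}_2t^2+\bm{u}_4)$. The paper's own derivation loses these two terms at the step where $z^2\sum_{n\geq 1}\bm{u}_{2n+2}t^{-(2n+5)}$ is replaced by $\tfrac{z^2}{t^2}\mathbf{S}_{\bm{u}}(t;z)$: that series actually equals $\tfrac{z^2}{t^2}\bigl(\mathbf{S}_{\bm{u}}(t;z)-\tfrac{\bm{u}_0}{t}-\tfrac{\bm{u}_2}{t^3}\bigr)$. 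So do not force your final polynomial to match the one in the statement; the method is correct and the constant and $t^2$ terms need the $z^2$ corrections.
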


\begin{proof}
First note that%
\begin{equation}
\mathbf{S}_{\bm{u}}(t;z)=\sum_{n=0}^{\infty }\frac{\bm{u}_{2n}}{t^{2n+1}},
\label{eq:StieltjesFreud}
\end{equation}%
and, therefore,%
\begin{equation*}
\partial _{t}\mathbf{S}_{\bm{u}}(t;z)=-\sum_{n=0}^{\infty }(2n+1)\frac{\bm{u}%
_{2n}}{t^{2n+2}}.
\end{equation*}%
Then from (\ref{eq:recumoment})%
\begin{align*}
\mathbf{S}_{\bm{u}}(t;z)& =\frac{\bm{u}_{0}}{t}+\frac{\bm{u}_{2}}{t^{3}}+%
\frac{\bm{u}_{4}}{t^{5}}+\displaystyle\sum_{n=3}^{\infty }\frac{\bm{u}_{2n}}{%
t^{2n+1}} \\
& =\frac{\bm{u}_{0}}{t}+\frac{\bm{u}_{2}}{t^{3}}+\frac{\bm{u}_{4}}{t^{5}}+%
\displaystyle\sum_{n=1}^{\infty }\frac{\bm{u}_{2n+4}}{t^{2n+5}} \\
& =\frac{\bm{u}_{0}}{t}+\frac{\bm{u}_{2}}{t^{3}}+\frac{\bm{u}_{4}}{t^{5}}%
+z^{2}\displaystyle\sum_{n=1}^{\infty }\frac{\bm{u}_{2n+2}}{t^{2n+5}}+\frac{1%
}{4}\displaystyle\sum_{n=1}^{\infty }(2n+1)\frac{\bm{u}_{2n}}{t^{2n+5}}-%
\frac{z^{2}}{4}\displaystyle\sum_{n=1}^{\infty }(2n-1)\frac{\bm{u}_{2n-2}}{%
t^{2n+5}} \\
& =\frac{\bm{u}_{0}}{t}+\frac{\bm{u}_{2}}{t^{3}}+\frac{\bm{u}_{4}}{t^{5}}+%
\frac{z^{2}}{t^{2}}\displaystyle\sum_{n=0}^{\infty }\frac{\bm{u}_{2n}}{%
t^{2n+1}}+\frac{1}{4t^{3}}\left( \displaystyle\sum_{n=1}^{\infty }(2n+1)%
\frac{\bm{u}_{2n}}{t^{2n+2}}+\frac{\bm{u}_{0}}{t^{2}}-\frac{\bm{u}_{0}}{t^{2}%
}\right) -\frac{z^{2}}{4t^{5}}\displaystyle\sum_{n=0}^{\infty }(2n+1)\frac{%
\bm{u}_{2n}}{t^{2n+2}} \\
& =\frac{\bm{u}_{0}}{t}+\frac{\bm{u}_{2}}{t^{3}}+\frac{\bm{u}_{4}}{t^{5}}+%
\frac{z^{2}}{t^{2}}\mathbf{S}_{\bm{u}}(t;z)-\frac{1}{4t^{3}}\partial _{t}%
\mathbf{S}_{\bm{u}}(t;z)-\frac{\bm{u}_{0}}{4t^{5}}+\frac{z^{2}}{4t^{5}}%
\partial _{t}\mathbf{S}_{\bm{u}}(t;z).
\end{align*}%
Thus, multiplying both sides by $-4t^{5}$ we get 
\begin{equation*}
(t^{2}-z^{2})\partial _{t}\mathbf{S}_{\bm{u}}(t;z)+4t^{3}(t^{2}-z^{2})%
\mathbf{S}_{\bm{u}}(t;z)=\bm{u}_{0}(4t^{4}-1)+4\left( \bm{u}_{2}t^{2}+\bm{u}%
_{4}\right) .
\end{equation*}
\end{proof}

% \begin{remark}
%     Note that differentiating (\ref{eq:StieltjesODE}) three times with respect to $t$ we have
%     \begin{equation*}
%         \partial^3_t\left[(t^2-z^2)\left(\partial_t\mathbf{S}_{\bm{u}}(t;z)+4t^3\mathbf{S}_{\bm{u}}(t;z)\right)\right] = 96tu_0(z)
%     \end{equation*}
%     and therefore,
%     \begin{equation*}
%         \partial_t\left(\frac{\partial^3_t\left[(t^2-z^2)\left(\partial_t\mathbf{S}_{\bm{u}}(t;z)+4t^3\mathbf{S}_{\bm{u}}(t;z)\right)\right]}{t}\right) = 0
%     \end{equation*}
%     Thus, the Stieltjes function (\ref{eq:StieltjesODE}) satisfies the fifth order homogeneous linear ODE with polynomial coefficients
%     \begin{align*}
%         &t(t^2-z^2)\partial^5_t\mathbf{S}_{\bm{u}}(t;z)+[4t^6-4t^4z^2+7t^2+z^2]\partial^4_t\mathbf{S}_{\bm{u}}(t;z)+[76t^5-44t^3z^2+6t]\partial^3_t\mathbf{S}_{\bm{u}}(t;z) \\
%         &+[420t^4-108t^2z^2-6]\partial^2_t\mathbf{S}_{\bm{u}}(t;z)+24t(30t^2-z^2)\partial_t\mathbf{S}_{\bm{u}}(t;z)+24(10t^2+z^2)\mathbf{S}_{\bm{u}}(t;z)=0
%     \end{align*}
% \end{remark}

\section{Laguerre-Freud equations}

We start this section by obtaining a third-order nonlinear difference
equation that the coefficients of the TTRR satisfy.

\begin{theorem}
The coefficients $\gamma_n(z)$ in (\ref{eq:3trrFreud}) associated with the
linear functional $\bm{u}$ satisfy the Laguerre-Freud equation 
\begin{align}
\frac{z^2}{4} &= \gamma_n\left[\gamma_{n-1}\left(\gamma_{n-2}+\gamma_{n-1}+%
\gamma_n-z^2\right)+\gamma_n\left(\gamma_{n+1}+\gamma_n+\gamma_{n-1}-z^2%
\right)+\frac{1}{4}-\frac{n}{2}\right]  \notag \\
&\quad -\gamma_{n+1}\left[\gamma_{n+2}\left(\gamma_{n+3}+\gamma_{n+2}+%
\gamma_{n+1}-z^2\right)+\gamma_{n+1}\left(\gamma_{n+2}+\gamma_{n+1}+%
\gamma_n-z^2\right)-\frac{n}{2}-\frac{3}{4}\right].  \label{eq:FreudLaguerre}
\end{align}
\end{theorem}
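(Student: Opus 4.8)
The plan is to derive \eqref{eq:FreudLaguerre} from the distributional Pearson equation \eqref{eq:PearsonFreud} by testing it against a suitable product of two consecutive orthogonal polynomials, and then to remove the partial sum $\gamma_1(z)+\cdots+\gamma_n(z)$ that inevitably appears by subtracting two consecutive instances of the resulting relation. From \eqref{eq:PearsonFreud} and the definitions of $D\bm u$ and $h(x)\bm u$ one has $\langle\bm u,\phi(x;z)q'(x)\rangle=\langle\bm u,\psi(x;z)q(x)\rangle$ for every $q\in\mathbb P$. Since $\bm u$ is symmetric and $\psi$ is odd, a product $q=P_aP_b$ gives a nontrivial identity only when $a+b$ is odd: if $a+b$ is even then $q$ is even, $q'$ is odd, and $\phi q'$, $\psi q$ are odd, so both sides vanish (thus $q=P_n^2$ and $q=P_{n-1}P_{n+1}$ are useless). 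The natural choice is therefore $q(x)=P_n(x;z)P_{n+1}(x;z)$.

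\textbf{Computing both sides.} Writing $q'=P_{n+1}'P_n+P_{n+1}P_n'$ and using orthogonality, $\langle\bm u,\phi P_{n+1}P_n'\rangle=\langle\bm u,P_{n+1}(\phi P_n')\rangle=n\,h_{n+1}$, because $\phi P_n'$ has degree $n+1$ with leading coefficient $n$; and $\langle\bm u,\phi P_{n+1}'P_n\rangle=\langle\bm u,(\phi P_{n+1}')P_n\rangle$ equals $h_n$ times the coefficient of $P_n$ in $\phi P_{n+1}'$, which by \eqref{eq:Sn}--\eqref{eq:sistgamma} (equivalently \eqref{eq:derivP}) involves the subleading coefficients $\lambda_{n+1,n-1}(z)=-(\gamma_1(z)+\cdots+\gamma_n(z))=:-g_n(z)$ and $\lambda_{n+2,n}(z)$. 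Hence the left-hand side equals $\bigl[\,2g_n(z)+(2n+1)\gamma_{n+1}(z)-(n+1)z^2\,\bigr]h_n$. For the right-hand side use $\psi=4x^3\phi-2x$, so that $\langle\bm u,\psi P_nP_{n+1}\rangle=4\langle\bm u,x^5P_nP_{n+1}\rangle-4z^2\langle\bm u,x^3P_nP_{n+1}\rangle-2\langle\bm u,xP_nP_{n+1}\rangle$; expand $xP_n$, $x^2P_{n+1}$, $x^3P_n$ in the basis $(P_k)$ via \eqref{eq:3trrFreud}, \eqref{eq:3trrSn2}, \eqref{eq:3trrSn3}, obtain $x^5P_n=x^2(x^3P_n)$, and evaluate with $\langle\bm u,P_jP_k\rangle=h_j\delta_{jk}$ and $h_{n+k}=\gamma_{n+1}(z)\cdots\gamma_{n+k}(z)\,h_n$ from \eqref{eq:gammah_m}. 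This expresses the right-hand side as $\gamma_{n+1}(z)h_n$ times an explicit polynomial in $\gamma_{n-1}(z),\dots,\gamma_{n+3}(z)$ and $z^2$.

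\textbf{Eliminating the partial sum and concluding.} Cancelling $h_n$ yields a relation $R_n$ that still contains $g_n(z)$. Since $g_n(z)-g_{n-1}(z)=\gamma_n(z)$, the difference $R_n-R_{n-1}$ (where $R_{n-1}$ comes from testing against $q=P_{n-1}(x;z)P_n(x;z)$) is free of partial sums, involves $\gamma_{n-2}(z),\dots,\gamma_{n+3}(z)$, and after dividing by $4$ and regrouping is exactly \eqref{eq:FreudLaguerre} (for $n\ge1$, with the conventions $\gamma_0(z)=\gamma_{-1}(z)=0$).

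The genuinely laborious, though routine, part is the expansion of $x^5P_n$ — which requires composing the iterated three-term recurrences \eqref{eq:3trrSn2}--\eqref{eq:3trrSn3} — together with the final reorganization of the resulting degree-six polynomial in the $\gamma_k(z)$ into the precise bracketed form appearing on the right of \eqref{eq:FreudLaguerre}; it is best handled (and verified) as a polynomial identity. A convenient sanity check is the limit $z\to\infty$: in it \eqref{eq:FreudLaguerre} reduces to $4\gamma_{n+1}(\gamma_n+\gamma_{n+1}+\gamma_{n+2})-4\gamma_n(\gamma_{n-1}+\gamma_n+\gamma_{n+1})=1$, i.e. the difference of two consecutive instances of the classical Freud relation \eqref{eq:gammafreud}.
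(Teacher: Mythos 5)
Your proposal is correct and follows essentially the same route as the paper: test the Pearson equation against $P_nP_{n+1}$, evaluate both sides via the iterated recurrences and \eqref{eq:derivP} to obtain a relation containing the subleading coefficient $\lambda_{n+1,n-1}(z)$ (your $-\sum_{k\le n}\gamma_k$ is exactly this, by telescoping \eqref{eq:sistgamma}), and then subtract the shifted instance to eliminate it. Only a notational caution: your auxiliary $g_n(z)=\gamma_1+\cdots+\gamma_n$ clashes with the $g_n(z)$ the paper defines later in \eqref{eq:gn}, so it should be renamed.
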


\begin{proof}
Let $h_n = \bigl< \bm{u},P_n^2\bigr>$. Since $\bm{u}$ satisfies (\ref%
{eq:Pearson}) with $\phi(x;z) = x^2-z^2$ and \newline
$\psi(x;z) = 4x^3(x^2-z^2)-2x$, then 
\begin{equation}
\bigl< \bm{u}, \psi P_nP_{n+1}\bigr> = -\bigl< D(\phi\bm{u}),P_nP_{n+1}%
\bigr> = \bigl< \bm{u},\phi (\partial_x P_n)P_{n+1}\bigr> + \bigl< \bm{u}%
,\phi P_n\partial_x P_{n+1}\bigr>.  \label{eq:FLeq}
\end{equation}

Using (\ref{eq:3trrSn2}), (\ref{eq:derivP}) and taking into account (\ref%
{eq:gammah_m}) we have

\begin{align*}
\bigl<\bm{u},\phi (\partial_x P_n)P_{n+1}\bigr> = \bigl<\bm{u}%
,x^2(\partial_x P_n)P_{n+1}\bigr> =n\bigl< \bm{u},P^2_{n+1}\bigr> =
n\gamma_{n+1}h_n,
\end{align*}

\begin{align*}
\bigl< \bm{u}, \phi P_n\partial_x P_{n+1}\bigr> &= \bigl< \bm{u}, x^2
P_n\partial_x P_{n+1}\bigr> - z^2\bigl< \bm{u}, P_n\partial_x P_{n+1}\bigr>
\\
&= \bigl< \bm{u}, x^2P_n\left[(n+1)P_n-((n+1)\gamma_n+2%
\lambda_{n+1,n-1})P_{n-2}\right]\bigr> -z^2(n+1)h_n \\
&= (\gamma_n+\gamma_{n+1})(n+1)\bigl<\bm{u},P^2_n\bigr>-\gamma_n%
\gamma_{n-1}((n+1)\gamma_n+2\lambda_{n+1,n-1})\bigl< \bm{u}, P^2_{n-2}\bigr>%
-z^2(n+1)h_n \\
&= [(n+1)(\gamma_{n+1}-z^2)-2\lambda_{n+1,n-1}]h_n.
\end{align*}

\bigskip Using (\ref{eq:3trrSn2}), (\ref{eq:3trrSn3}) and (\ref{eq:gammah_m}%
) 
\begin{align*}
\bigl< \bm{u},\psi P_nP_{n+1} \bigr> &= 4\bigl< \bm{u},(x^3P_n)(x^2P_{n+1})%
\bigr> - 4z^2\bigl< \bm{u},x^3P_nP_{n+1}\bigr> -2\bigl< \bm{u},xP_nP_{n+1}%
\bigr> \\
& = 4\bigl<\bm{u},
x^3P_n(P_{n+3}+(\gamma_{n+1}+\gamma_{n+2})P_{n+1}+\gamma_{n}%
\gamma_{n+1}P_{n-1})\bigr>-4z^2\bigl<\bm{u},(\gamma_n+\gamma_{n+1}+%
\gamma_{n+2})P^2_{n+1}\bigr> \\
&-2\bigl<\bm{u},P^2_{n+1}\bigr> \\
\\
&=4\left[\gamma_{n+3}\gamma_{n+2}\gamma_{n+1}+(\gamma_{n+2}+\gamma_{n+1})(%
\gamma_n+\gamma_{n+1}+\gamma_{n+2})\gamma_{n+1}+\gamma_n\gamma_{n+1}(%
\gamma_n+\gamma_{n+1}+\gamma_{n-1})\right]h_n \\
&- 4z^2\gamma_{n+1}(\gamma_n+\gamma_{n+1}+\gamma_{n+2})h_n -
2\gamma_{n+1}h_n.
\end{align*}
Then from (\ref{eq:FLeq}) 
\begin{align}
&4\left[\gamma_{n+3}\gamma_{n+2}\gamma_{n+1}+(\gamma_{n+2}+\gamma_{n+1})(%
\gamma_n+\gamma_{n+1}+\gamma_{n+2})\gamma_{n+1}+\gamma_n\gamma_{n+1}(%
\gamma_n+\gamma_{n+1}+\gamma_{n-1})\right]  \notag \\
&- 4z^2\gamma_{n+1}(\gamma_n+\gamma_{n+1}+\gamma_{n+2}) =
(2n+3)\gamma_{n+1}-z^2(n+1)-2\lambda_{n+1,n-1}.  \label{eq:Freudn}
\end{align}
Shifting $n\rightarrow n-1$

\begin{align}
&4\left[\gamma_{n+2}\gamma_{n+1}\gamma_{n}+(\gamma_{n+1}+\gamma_{n})(%
\gamma_{n-1}+\gamma_{n}+\gamma_{n+1})\gamma_{n}+\gamma_{n-1}\gamma_{n}(%
\gamma_{n-1}+\gamma_{n}+\gamma_{n-2})\right]  \notag \\
&- 4z^2\gamma_{n}(\gamma_{n-1}+\gamma_{n}+\gamma_{n+1}) =
(2n+1)\gamma_{n}-nz^2-2\lambda_{n,n-2}.  \label{eq:Freudn-1}
\end{align}

Subtracting (\ref{eq:Freudn}) - (\ref{eq:Freudn-1}) and using (\ref%
{eq:sistgamma}) we finally get (\ref{eq:FreudLaguerre}).
\end{proof}

\medskip

Notice that (\ref{eq:FreudLaguerre}) is a third-order difference equation
involving six consecutive terms of the sequence $(\gamma _{n}(z))_{n\geq 0}$%
. To achieve a more accurate calculation of $\gamma _{n}(z)$, we now derive
a fifth-order difference relation in terms of five consecutive terms.

\begin{theorem}
The coefficients $\gamma _{n}(z)$ satisfy the nonlinear difference relation%
\begin{align}
& z^{2}\left( \frac{n}{2}-2\gamma _{n}(\gamma _{n+1}+\gamma _{n}+\gamma
_{n-1})\right) ^{2}=  \notag \\
& \gamma _{n}\left( n+\frac{1}{2}-2\gamma _{n}(\gamma _{n+1}+\gamma
_{n}+\gamma _{n-1})-2\gamma _{n+1}(\gamma _{n+2}+\gamma _{n+1}+\gamma
_{n})\right)   \label{eq:Freud2} \\
& \left( n-\frac{1}{2}-2\gamma _{n}(\gamma _{n+1}+\gamma _{n}+\gamma
_{n-1})-2\gamma _{n-1}(\gamma _{n}+\gamma _{n-1}+\gamma _{n-2})\right)
,\quad n\geq 0.  \notag
\end{align}%
Here we assume $\gamma _{0}=\gamma _{-1}=\gamma _{-2}=0.$
\end{theorem}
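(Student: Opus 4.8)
The plan is to return to the integration by parts that produced the Pearson equation (\ref{eq:PearsonFreud}), but now to keep the boundary terms, which no longer vanish: for every $f\in\mathbb{P}$ the definition (\ref{eq:FreudLF}) gives, after one integration by parts,
\[
\langle\bm{u},f'\rangle=\bigl[f(z)-f(-z)\bigr]e^{-z^{4}}+4\langle\bm{u},x^{3}f\rangle .
\]
Everything else is just extraction of information from this identity by choosing $f$ to be products of the $P_{n}(x;z)$ and using the parity (\ref{eq:symmetry}), the recurrences (\ref{eq:3trrFreud}), (\ref{eq:3trrSn2}), (\ref{eq:3trrSn3}), and $h_{n}=\langle\bm{u},P_{n}^{2}\rangle$, $\gamma_{n}=h_{n}/h_{n-1}$. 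It is convenient to abbreviate $a_{n}:=\tfrac{n}{2}-2\gamma_{n}(\gamma_{n+1}+\gamma_{n}+\gamma_{n-1})$; regrouping the rational constants, the two factors on the right of (\ref{eq:Freud2}) are exactly $a_{n}+a_{n+1}$ and $a_{n}+a_{n-1}$, and its left side is $z^{2}a_{n}^{2}$, so the claim is $z^{2}a_{n}^{2}=\gamma_{n}(a_{n}+a_{n+1})(a_{n}+a_{n-1})$.

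First I would take $f=P_{n-1}P_{n}$. On the left only the $P_{n-1}$-component of $P_{n}'$ contributes, so $\langle\bm{u},(P_{n-1}P_{n})'\rangle=n\,h_{n-1}$. On the right, $P_{n-1}(-z)P_{n}(-z)=-P_{n-1}(z)P_{n}(z)$ by (\ref{eq:symmetry}), so the boundary term is $2P_{n-1}(z)P_{n}(z)e^{-z^{4}}$, while by (\ref{eq:3trrSn3}) the $P_{n-1}$-coefficient of $x^{3}P_{n}$ is $\gamma_{n}(\gamma_{n+1}+\gamma_{n}+\gamma_{n-1})$, so $\langle\bm{u},x^{3}P_{n-1}P_{n}\rangle=\gamma_{n}(\gamma_{n+1}+\gamma_{n}+\gamma_{n-1})h_{n-1}$. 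Dividing by $h_{n-1}$ and recognising $n-4\gamma_{n}(\gamma_{n+1}+\gamma_{n}+\gamma_{n-1})=2a_{n}$ yields the crucial identity
\[
a_{n}=\frac{P_{n-1}(z)\,P_{n}(z)}{h_{n-1}}\,e^{-z^{4}} .
\]
Adding this identity at $n$ and $n+1$, factoring out $P_{n}(z)e^{-z^{4}}$, using $h_{n+1}=\gamma_{n+1}h_{n}$ and the recurrence (\ref{eq:3trrFreud}) evaluated at $x=z$, i.e. $zP_{n}(z)=P_{n+1}(z)+\gamma_{n}P_{n-1}(z)$, collapses the bracket and gives
\[
a_{n}+a_{n+1}=\frac{z\,P_{n}(z)^{2}}{h_{n}}\,e^{-z^{4}}
\]
(the same relation also drops out of the boundary-term identity applied to $f=xP_{n}^{2}$, via $x^{4}P_{n}^{2}=(x^{2}P_{n})^{2}$ and (\ref{eq:3trrSn2})).

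With both closed forms in hand the conclusion is pure bookkeeping: since $\gamma_{n}=h_{n}/h_{n-1}$,
\[
\gamma_{n}(a_{n}+a_{n+1})(a_{n}+a_{n-1})=\frac{h_{n}}{h_{n-1}}\cdot\frac{z\,P_{n}(z)^{2}e^{-z^{4}}}{h_{n}}\cdot\frac{z\,P_{n-1}(z)^{2}e^{-z^{4}}}{h_{n-1}},
\]
the two factors $h_{n}$ cancel, leaving $z^{2}\bigl(P_{n-1}(z)P_{n}(z)e^{-z^{4}}/h_{n-1}\bigr)^{2}=z^{2}a_{n}^{2}$, which is (\ref{eq:Freud2}); the case $n=0$ is trivial because $\gamma_{0}=0$ forces both sides to vanish. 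The only genuine computation is the two boundary-term evaluations — reading off the $P_{k}$-coefficients in $x^{3}P_{m}$ and $x^{4}P_{m}^{2}$ from (\ref{eq:3trrSn2})--(\ref{eq:3trrSn3}) and tracking the parity signs in $f(z)-f(-z)$ — and the (mild) obstacle is arranging them so that the transcendental factor $e^{-z^{4}}$ and the normalisers $h_{n}$ cancel exactly. It is then no accident that (\ref{eq:Freud2}) has a ``squared'' shape: it is literally $z^{2}$ times the square of the identity $a_{n}=P_{n-1}(z)P_{n}(z)e^{-z^{4}}/h_{n-1}$.
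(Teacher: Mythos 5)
Your proposal is correct and follows essentially the same route as the paper: both hinge on the two endpoint identities $P_{n}(z;z)P_{n-1}(z;z)e^{-z^{4}}=g_{n}h_{n-1}$ (the paper's \eqref{eq:Pn(z;z)}) and $zP_{n}^{2}(z;z)e^{-z^{4}}=(g_{n}+g_{n+1})h_{n}$ (the paper's \eqref{eq:Pn(z;z)^2}), obtained by keeping the boundary terms in the integration by parts against $e^{-x^{4}}$, and then combine them in exactly the same way. The only (harmless, and in fact slightly slicker) deviation is that you derive the second identity from the first by summing at consecutive indices and using the recurrence at $x=z$, whereas the paper obtains it by a separate integration by parts on $\langle\bm{u},\partial_{x}(xP_{n}^{2})\rangle$ — a variant you yourself note in passing.
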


%%%%%%%%%%%%%%%%%%%%%%%%%

%%%%%%%%%%%%%%%%%%%%%%%%%

\begin{proof}
From (\ref{eq:3trrSn3}) we have%
\begin{equation*}
\bigl<\bm{u},x^{3}P_{n}P_{n-1}\bigr>=\gamma _{n}(\gamma _{n+1}+\gamma
_{n}+\gamma _{n-1})h_{n-1},\quad n\geq 1.
\end{equation*}%
Integrating by parts we get%
\begin{equation*}
\bigl<\bm{u},x^{3}P_{n}P_{n-1}\bigr>=-\frac{1}{4}\left[
P_{n}P_{n-1}e^{-x^{4}}\right] _{-z}^{z}+\frac{1}{4}\bigl<\bm{u},\partial
_{x}(P_{n}P_{n-1})\bigr>,
\end{equation*}%
while (\ref{eq:symmetry}) gives%
\begin{equation*}
\left[ P_{n}P_{n-1}e^{-x^{4}}\right]
_{-z}^{z}=2P_{n}(z;z)P_{n-1}(z;z)e^{-z^{4}}.
\end{equation*}%
Using (\ref{eq:OPS}) we see that $\bigl<\bm{u},P_{n}\partial _{x}P_{n-1}%
\bigr>=0$ and since $P_{n}(x;z)=x^{n}+\mathcal{O}(x^{n-1})$ we have%
\begin{equation*}
\bigl<\bm{u},P_{n-1}\partial _{x}P_{n}\bigr>=nh_{n-1}(z).
\end{equation*}%
Hence,%
\begin{equation}
P_{n}(z;z)P_{n-1}(z;z)e^{-z^{4}}=\left[ \frac{n}{2}-2\gamma _{n}(\gamma
_{n+1}+\gamma _{n}+\gamma _{n-1})\right] h_{n-1}.  \label{eq:Pn(z;z)}
\end{equation}%
On the other hand, from (\ref{eq:3trrSn2}) and (\ref{eq:OPS}) we get 
\begin{equation*}
\bigl<\bm{u},x^{4}P_{n}^{2}\bigr>=\bigl<\bm{u},(x^{2}P_{n})^{2}\bigr>%
=h_{n+2}+(\gamma _{n}+\gamma _{n+1})^{2}h_{n}+\gamma _{n}^{2}\gamma
_{n-1}^{2}h_{n-2}=\bigl[\gamma _{n+2}\gamma _{n+1}+(\gamma _{n}+\gamma
_{n+1})^{2}+\gamma _{n}\gamma _{n-1}\bigr]h_{n}.
\end{equation*}%
An integration by parts gives 
\begin{equation}
\bigl<\bm{u},\partial _{x}(xP_{n}^{2})\bigr>=\left[ xP_{n}^{2}e^{-x^{4}}%
\right] _{-z}^{z}+4\bigl<\bm{u},x^{4}P_{n}^{2}\bigr>%
=2zP_{n}^{2}(z;z)e^{-z^{4}}+4\bigl<\bm{u},x^{4}P_{n}^{2}\bigr>.
\label{eq:Freud2..xP2}
\end{equation}%
As a consequence, 
\begin{equation}
\bigl<\bm{u},\partial _{x}(xP_{n}^{2})\bigr>=2zP_{n}^{2}(z;z)e^{-z^{4}}+4%
\bigl[\gamma _{n+2}\gamma _{n+1}+(\gamma _{n}+\gamma _{n+1})^{2}+\gamma
_{n}\gamma _{n-1}\bigr]h_{n}.  \label{eq:Freud2.xP2}
\end{equation}%
On the other hand, since 
\begin{equation*}
xP_{n}(x;z)\partial _{x}P_{n}(x;z)=P_{n}\left[ nx^{n}+\mathcal{O}(x^{n-1})%
\right] ,
\end{equation*}%
and using (\ref{eq:OPS}), we obtain 
\begin{equation*}
\bigl<\bm{u},\partial _{x}(xP_{n}^{2})\bigr>=\bigl<\bm{u},P_{n}^{2}\bigr>+2%
\bigl<\bm{u},xP_{n}\partial _{x}P_{n}\bigr>=(2n+1)h_{n}.
\end{equation*}%
From (\ref{eq:Freud2.xP2}) and (\ref{eq:Freud2..xP2}) we conclude that
\begin{equation}
P^2_n(z;z)e^{-z^4} = \frac{(2n+1)-4\bigl[\gamma_{n+2}\gamma_{n+1}+(\gamma_n+%
\gamma_{n+1})^2 +\gamma_{n}\gamma_{n-1}\bigr] }{2z}h_n.  \label{eq:Pn(z;z)^2}
\end{equation}
Squaring (\ref{eq:Pn(z;z)}), we have 
\begin{equation*}
P^2_n(z;z)e^{-z^4}P^2_{n-1}(z;z)e^{-z^4} = \left[\frac{n}{2}%
-2\gamma_n(\gamma_{n+1}+\gamma_n+\gamma_{n-1})\right]^2 h^2_{n-1},
\end{equation*}
and using (\ref{eq:Pn(z;z)^2}), we obtain
\begin{align*}
&\frac{(2n+1)-4\bigl[\gamma_{n+2}\gamma_{n+1}+(\gamma_n+\gamma_{n+1})^2
+\gamma_{n}\gamma_{n-1}\bigr] }{2z}h_n \\
&\times \frac{(2n-1)-4\bigl[\gamma_{n+1}\gamma_{n}+(\gamma_{n-1}+%
\gamma_{n})^2 +\gamma_{n-1}\gamma_{n-2}\bigr] }{2z}h_{n-1} \\
&= \left[\frac{n}{2}-2\gamma_n(\gamma_{n+1}+\gamma_n+\gamma_{n-1})\right]^2
h^2_{n-1}, \quad n\geq 2.
\end{align*}
Finally, from (\ref{eq:gammah_m}),
\begin{align*}
&\frac{\gamma_n\biggl(2n-4\gamma_{n+2}\gamma_{n+1}-4(\gamma_n+%
\gamma_{n+1})^2-4\gamma_{n}\gamma_{n-1}+1\biggr)\biggl(2n-4\gamma_{n-1}%
\gamma_{n-2}-4(\gamma_{n}+\gamma_{n-1})^2-4\gamma_{n+1}\gamma_{n}-1\biggr)}{%
4z^2} \\
&= \biggl(\frac{n}{2}-2\gamma_n(\gamma_{n+1}+\gamma_n+\gamma_{n-1})\biggr)^2.
\end{align*}
That yields the result for all $n\geq 2$. Notice that \eqref{eq:Pn(z;z)} and \eqref{eq:Pn(z;z)^2}, together with the initial condition $\gamma_0=0$ give,
for $n=1$, 
\begin{equation*}
\frac{\gamma_1\biggl(3-4\gamma_3\gamma_2-4(\gamma_1+\gamma_2)^2\biggr)\biggl(%
1-4\gamma_1^2+\gamma_2\gamma_1\biggr)}{4z^2} = \biggl(\frac{1}{2}%
-2\gamma_1(\gamma_2+\gamma_1)\biggr)^2.
\end{equation*}
%Notice that, for $n=1$, the previous relation reduces to
% \begin{equation*}
%     \frac{\gamma_1\biggl(2-4\gamma_3\gamma_2-4(\gamma_1+\gamma_2)^2+1\biggr)\biggl(2-4\gamma_1^2+\gamma_2\gamma_1-1\biggr)}{4z^2} = \biggl(\frac{1}{2}-2\gamma_1(\gamma_2+\gamma_1)\biggr)^2
% \end{equation*}
\end{proof}

\begin{corollary}
For $n\in \mathbb{N}$, let $g_n(z)$ be defined by 
\begin{equation}
g_n(z) = \frac{n}{2}-2\gamma_n(z)\left(\gamma_{n+1}(z)+\gamma_n(z)+%
\gamma_{n-1}(z)\right).  \label{eq:gn}
\end{equation}
Then, $g_n(z)$ satisfies the nonlinear relation 
\begin{equation}
z^2g^2_n = \gamma_n(g_{n+1}+g_n)(g_n+g_{n-1}).
\label{eq:z^2gn^2}
\end{equation}
% Moreover, notice that (\ref{eq:gn}) and (\ref{eq:z^2gn^2}) generate the corresponding succession

% \begin{equation*}
%     g_0 = 0, \qquad
% \end{equation*}
\end{corollary}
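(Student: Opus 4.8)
The plan is to recognize the Corollary as a compact re-encoding of the theorem just proved, obtained by a direct substitution. First I would rewrite the definition \eqref{eq:gn} as
\begin{equation*}
2\gamma_n(\gamma_{n+1}+\gamma_n+\gamma_{n-1}) = \frac{n}{2} - g_n,
\end{equation*}
and, after the shifts $n\mapsto n+1$ and $n\mapsto n-1$, the companion identities
\begin{equation*}
2\gamma_{n+1}(\gamma_{n+2}+\gamma_{n+1}+\gamma_n) = \frac{n+1}{2} - g_{n+1},
\qquad
2\gamma_{n-1}(\gamma_n+\gamma_{n-1}+\gamma_{n-2}) = \frac{n-1}{2} - g_{n-1}.
\end{equation*}

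Next I would substitute these three relations into the right-hand side of \eqref{eq:Freud2}. The first factor there,
\begin{equation*}
n+\tfrac12 - 2\gamma_n(\gamma_{n+1}+\gamma_n+\gamma_{n-1}) - 2\gamma_{n+1}(\gamma_{n+2}+\gamma_{n+1}+\gamma_n),
\end{equation*}
becomes $n+\tfrac12 - \bigl(\tfrac n2 - g_n\bigr) - \bigl(\tfrac{n+1}{2} - g_{n+1}\bigr) = g_n + g_{n+1}$, since the numerical part cancels, $n+\tfrac12 - \tfrac n2 - \tfrac{n+1}{2}=0$. In the same way the second factor collapses to $g_n + g_{n-1}$, using $n-\tfrac12 - \tfrac n2 - \tfrac{n-1}{2} = 0$. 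Since the left-hand side of \eqref{eq:Freud2} is precisely $z^2 g_n^2$ by \eqref{eq:gn}, this is exactly the claimed relation \eqref{eq:z^2gn^2}.

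For the boundary index $n=1$, where \eqref{eq:z^2gn^2} formally involves $g_0$, one reads $g_0 = 0$ in agreement with the standing convention $\gamma_0=\gamma_{-1}=\gamma_{-2}=0$ and with \eqref{eq:gn} at $n=0$; the same substitution applied to the $n=1$ identity established at the end of the preceding proof (which rests on \eqref{eq:Pn(z;z)}, \eqref{eq:Pn(z;z)^2} and $\gamma_0=0$) then yields \eqref{eq:z^2gn^2} for $n=1$ as well. There is no real obstacle in this argument: all of the analytic content has already been produced in the preceding theorem, and what is left is only the cancellation of the constants inside the two factors. The value of the statement is the structural simplification $z^2 g_n^2 = \gamma_n(g_{n+1}+g_n)(g_n+g_{n-1})$, which is the form used in the later sections.
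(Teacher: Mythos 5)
Your derivation is correct and is precisely the intended one: the paper states this as an immediate corollary of the preceding theorem, and substituting $2\gamma_n(\gamma_{n+1}+\gamma_n+\gamma_{n-1})=\tfrac n2-g_n$ (and its shifts) into \eqref{eq:Freud2} makes the constants cancel exactly as you compute, turning the two factors into $g_n+g_{n+1}$ and $g_n+g_{n-1}$ and the left-hand side into $z^2g_n^2$. Your remark that $g_0=0$ is consistent with the convention $\gamma_0=0$ correctly covers the boundary case $n=1$.
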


\begin{remark}
Notice that \eqref{eq:gn} and \eqref{eq:z^2gn^2} constitute a system of
coupled difference equations that can be recursively obtained from an
iterative process with initial conditions $\gamma_0=g_0= 0$ as well as (\ref%
{eq:evenmomentsTF}),

\begin{equation*}
\gamma_1(z) = \frac{\bm{u}_2(z)}{\bm{u}_0(z)}, \quad \gamma_{2}(z) = \frac{%
\bm{u}_4(z) \bm{u}_{0} - \bm{u}^2_2(z)}{\bm{u}_{0}\bm{u}_2(z)}.
\end{equation*}
Then, from \eqref{eq:gn} we have 
\begin{equation*}
g_1(z) = \frac{1}{2}-2\gamma_1(z)(\gamma_2(z)+\gamma_1(z)).
\end{equation*}
\end{remark}

\begin{remark}
According to (\ref{eq:gammafreud}), we see that $g_n(z)\rightarrow 0$ when $%
z\rightarrow\infty.$
\end{remark}

For $k=1,2,...,n-1,$ we follow the scheme in Figure \ref{figEsquema}.

\begin{figure}[tbp]
\centering
\fbox{ 
\begin{tikzpicture}[
    node distance=0.1cm and 2cm,
    every node/.style={minimum size=1cm},
    arrow/.style={-{Stealth[length=2mm, width=2mm]}}
  ]
  \node (A1) at (-6, 4) {$g_1$};
  \node (A2) [below=of A1] {$\gamma_3$};
  \node (A3) [below=of A2] {$\gamma_4$};
  \node (A4) [below=of A3] {$\gamma_5$};
  \node (A5) [below=of A4] {$\vdots$};
  \node (A6) [below=of A5] {$\vdots$};
  \node (A7) [below=of A6] {$\gamma_{n-2}$};
  \node (A8) [below=of A7] {$\gamma_{n-1}$};
  \node (A9) [below=of A8] {$\gamma_{n}$};

    \node (B1) at (4, 4) {$g_2$};
  \node (B2) [below=of B1] {$g_3$};
  \node (B3) [below=of B2] {$g_4$};
  \node (B4) [below=of B3] {$g_5$};
  \node (B5) [below=of B4] {$\vdots$};
  \node (B6) [below=of B5] {$\vdots$};
  \node (B7) [below=of B6] {$g_{n-2}$};
  \node (B8) [below=of B7] {$g_{n-1}$};
  \node (B9) [below=of B8] {$g_{n}$};

    \draw[arrow,black] (A1) -- (B1);
  \draw [arrow, black] (A1) -- (B2);
  \draw[arrow,black] (A2) -- (B3);
  \draw[arrow,black] (A3) -- (B4);
  \draw [arrow, black] (B1) -- (A2);
  \draw[arrow,black] (B2) -- (A3);
  \draw[arrow,black] (B3) -- (A4);
  \draw[arrow, black] (B1) -- (B2);
  \draw[arrow,black] (B2) -- (B3);
  \draw[arrow,black] (B3) -- (B4);
  \draw[arrow,black] (A1) -- (A2);
  \draw[arrow,black] (A2) -- (A3);
  \draw[arrow,black] (A3) -- (A4);
  \draw[arrow,black] (A7) -- (A8);
  \draw[arrow,black] (A8) -- (A9);
  \draw[arrow,black] (B7) -- (B8);
  \draw[arrow,black] (B8) -- (B9);
  \draw[arrow,black] (B7)--(A8);
  \draw[arrow,black] (B8)--(A9);
  \draw[arrow,black] (A7)--(B8);
  \draw[arrow,black] (A8)--(B9);
  \draw[arrow,black] (A4)--(A5);
  \draw[arrow,black] (B4)--(B5);
  \draw[arrow,black] (A5)--(A6);
  \draw[arrow,black] (B5)--(B6);
  \draw[arrow,black] (A6)--(A7);
  \draw[arrow,black] (B6)--(B7);
  \draw[arrow,black] (A4)--(B5);
  \draw[arrow,black] (B4)--(A5);
  \draw[arrow,black] (A6)--(B7);
  \draw[arrow,black] (B6)--(A7);

  \draw [arrow, bend left=60,black] (B1) to (B3);
  \draw [arrow, bend left=60,black] (B2) to (B4);
  \draw [arrow, bend right=60,black] (A2) to (A4);
  \draw [arrow, bend left=60,black] (B7) to (B9);
  \draw [arrow, bend right=60,black] (A7) to (A9);
  \draw [arrow, bend left=60,black] (B6) to (B8);
  \draw [arrow, bend right=60,black] (A6) to (A8);
\draw [arrow, bend left=60,black] (B3) to (B5);
  \draw [arrow, bend right=60,black] (A3) to (A5);
    \node[anchor=east] at (-8,4) {$k = 1$};
  \node[anchor=east] at (-8,3) {$k = 2$};
  \node[anchor=east] at (-8,2) {$k = 3$};
  \node[anchor=east] at (-8,1) {$k = 4$};
  \node[anchor=east] at (-8,-0.45) {$\vdots$};
  \node[anchor=east] at (-8,-1.6) {$\vdots$};
  \node[anchor=east] at (-7.31, -2.6) {$k=n-3$};
  \node[anchor=east] at (-7.31, -3.6) {$k=n-2$};
  \node[anchor=east] at (-7.31, -4.7) {$k=n-1$};

    \node[anchor=south] at (-5,5) {$g_k = \frac{k}{2}-2\gamma_k(\gamma_{k+1}+\gamma_k+\gamma_{k-1})$};
  \node[anchor=south] at (3,5) {$z^2g_k^2 = \gamma_k(g_k+g_{k+1})(g_k+g_{k-1})$ };
\end{tikzpicture}
}
\caption{\textit{Schematic representation of the algorithm used to generate $%
\protect\gamma_n$ and $g_n$ from the coupled difference equations 
\eqref{eq:gn} and \eqref{eq:z^2gn^2} taking into account the initial
conditions $\protect\gamma_0= g_0=0$, and $\protect\gamma_1$, and $\protect%
\gamma_2$ defined as above. On the left side, we have $g_1$ and $\protect%
\gamma_{k+1}$ for $k = 1, 2, \ldots, n-1$, generated using \eqref{eq:gn}. On
the right side, we have $g_{k+1}$ for $k = 1, 2, \ldots, n-1$, generated
using \eqref{eq:z^2gn^2}. The black arrows indicate the iterative process.}}
\label{figEsquema}
\end{figure}
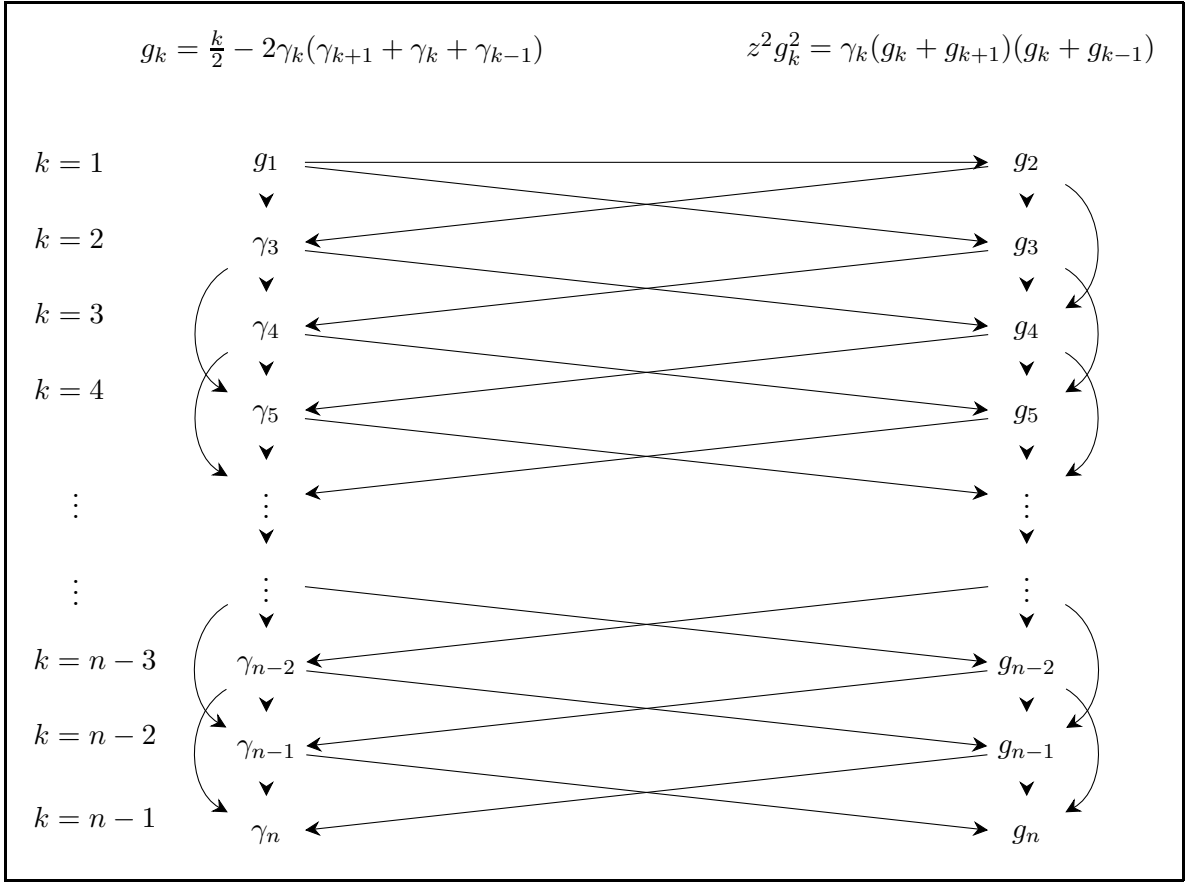
% \begin{algorithm}
% \caption{Algorithm to recursively obtain $\gamma_n(z)$ and $g_n(z)$.}\label{alg:cap}
% \begin{algorithmic}[1]
% \Require $n \geq 1$, $z\geq 0$
% \Ensure $\gamma_n(z)$, $g_n(z)$

% \State Initial Data: $\gamma_0 = g_0=0$, $\gamma_1(z)$ and $\gamma_2(z)$
% \State $X \gets x$
% \State $N \gets n$
% \While{$N \neq 0$}
% \If{$N$ is even}
%     \State $X \gets X \times X$
%     \State $N \gets \frac{N}{2}$  \Comment{This is a comment}
% \ElsIf{$N$ is odd}
%     \State $y \gets y \times X$
%     \State $N \gets N - 1$
% \EndIf
% \EndWhile
% \end{algorithmic}
% \end{algorithm}
%\end{remark}%aqui

From $(\ref{eq:z^2gn^2})$ we obtain an asymptotic expansion of $g_n(z)$ as $%
n\rightarrow\infty.$

\begin{theorem}
For $z=\mathcal{O}(1)$, we have 
\begin{equation}
g_n(z)\sim \frac{n}{2}-\frac{3z^4}{8}-\frac{3z^4}{16}n^{-2}-\frac{9z^8}{32}%
n^{-3}-\frac{3z^4(22+27z^8)}{256}n^{-4}+\mathcal{O}(n^{-5}),
\label{eq:gnasymp}
\end{equation}
as $n\rightarrow \infty$.
\end{theorem}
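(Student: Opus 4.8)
The plan is to exploit the coupled system \eqref{eq:gn}--\eqref{eq:z^2gn^2} to pin down the large-$n$ behaviour of $g_n(z)$ and $\gamma_n(z)$ simultaneously. First I would record the a priori information: since $\bm{u}$ is positive-definite and supported on the bounded interval $[-z,z]$, the recurrence coefficients $\gamma_n(z)$ in \eqref{eq:3trrFreud} are positive and bounded, so \eqref{eq:gn} forces $g_n(z)=\frac n2+\mathcal{O}(1)$ and, in particular, $g_n(z)>0$ and $g_n(z)+g_{n\pm1}(z)=n+\mathcal{O}(1)$ for large $n$. Rewriting \eqref{eq:z^2gn^2} as
\begin{equation*}
\gamma_n(z)=\frac{z^2 g_n^2(z)}{\bigl(g_{n+1}(z)+g_n(z)\bigr)\bigl(g_n(z)+g_{n-1}(z)\bigr)},
\end{equation*}
this already gives $\gamma_n(z)\to z^2/4$. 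Substituting this expression for $\gamma_n$ and its shifts $\gamma_{n\pm1}$ back into \eqref{eq:gn} produces a single closed nonlinear difference equation for $(g_n)$ involving $g_{n-2},\dots,g_{n+2}$; equivalently, one may keep both sequences and iterate \eqref{eq:gn} and \eqref{eq:z^2gn^2} in tandem, which has the advantage that each equation stays simple.

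Next I would postulate formal asymptotic series $g_n(z)\sim \frac n2+\sum_{k\ge0}\alpha_k(z)\,n^{-k}$ and $\gamma_n(z)\sim \frac{z^2}{4}+\sum_{k\ge1}\beta_k(z)\,n^{-k}$, expand the shifts $g_{n\pm1},g_{n\pm2},\gamma_{n\pm1}$ by the binomial series in $n^{-1}$, and match coefficients of like powers of $n^{-1}$. The leading balance in \eqref{eq:gn} reads $-\alpha_0=2\cdot\frac{z^2}{4}\cdot\frac{3z^2}{4}=\frac{3z^4}{8}$, i.e. $\alpha_0=-\frac{3z^4}{8}$, the constant term in \eqref{eq:gnasymp}. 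The order-$n^{1}$ balance in \eqref{eq:z^2gn^2} — once one notices that the $\mp$ contributions of the shifts of $\gamma_n$ cancel — yields $\beta_1=0$, whence the order-$n^{-1}$ balance in \eqref{eq:gn} gives $\alpha_1=-\tfrac32 z^2\beta_1=0$, which is why no $n^{-1}$ term appears. Iterating the bookkeeping, the order-$n^{1-k}$ part of \eqref{eq:z^2gn^2} determines $\beta_k$ from $\alpha_0,\dots,\alpha_{k-1}$ and $\beta_1,\dots,\beta_{k-1}$, and the order-$n^{-k}$ part of \eqref{eq:gn} then determines $\alpha_k$; carrying this through for $k=2,3,4$ reproduces $\alpha_2=-\frac{3z^4}{16}$, $\alpha_3=-\frac{9z^8}{32}$ and $\alpha_4=-\frac{3z^4(22+27z^8)}{256}$.

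Finally I would upgrade the formal computation to a genuine asymptotic expansion. With the candidate coefficients in hand, set $r_n:=g_n(z)-\bigl(\frac n2+\sum_{k=0}^{4}\alpha_k n^{-k}\bigr)$ and show by finite induction that $r_n=\mathcal{O}(n^{-5})$: substituting the partial sum plus $r_n$ into the closed difference equation, all previously matched terms cancel and one is left with an estimate expressing $r_n$ in terms of $r_{n\pm1},r_{n\pm2}$ with a forcing term of size $\mathcal{O}(n^{-5})$, uniformly for $z$ in compact sets — this is where the hypothesis $z=\mathcal{O}(1)$ enters. I expect the genuine obstacle to be twofold: the sheer bulk of the symbolic expansion of the shifts through order $n^{-4}$, which in practice is handled with a computer algebra system; and securing the structural input that $\gamma_n(z)$ is bounded and that both sequences actually admit expansions in integer powers of $n^{-1}$ (rather than, say, oscillatory corrections), which can be obtained either from the standard theory of orthogonal polynomials on a bounded interval or bootstrapped directly from the string equation \eqref{eq:Freud2}.
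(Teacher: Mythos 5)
Your proposal is correct and follows essentially the same route as the paper: eliminate $\gamma_n$ between \eqref{eq:gn} and \eqref{eq:z^2gn^2} to obtain a closed nonlinear difference equation for $g_n$ alone, insert a formal series in powers of $n^{-1}$ with leading term $n/2$, and match coefficients order by order. The only differences are to your credit: you justify the choice of leading coefficient via the a priori boundedness of $\gamma_n(z)$ coming from the compact support of $\bm{u}$ (the paper merely discards the trivial branch $\xi_{-1}=0$ of the leading-order balance), and you sketch a remainder estimate upgrading the formal series to a genuine asymptotic expansion, a step the paper omits entirely.
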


\begin{proof}
Replacing (\ref{eq:z^2gn^2}) in (\ref{eq:gn}), we have 
\begin{align}
&\left(\frac{n}{2}-g_n\right)\left(g_{n+2}+g_{n+1}\right)%
\left(g_{n-1}+g_{n-2}\right)\left(g_{n+1}+g_n
\right)^2\left(g_n+g_{n-1}\right)^2  \notag \\
&= 2z^4g^2_n\biggl[g^2_{n+1}\left(g_n+g_{n-1}\right)\left(g_{n-1}+g_{n-2}%
\right)  \notag \\
&\quad +g^2_n\left(g_{n+2}+g_{n+1}\right)\left(g_{n-1}+g_{n-2}\right)  \notag
\\
&\quad +g^2_{n-1}\left(g_{n+2}+g_{n+1}\right)\left(g_{n+1}+g_{n}\right)%
\biggr].  \label{eq:gngn}
\end{align}
Next, we can replace the expansion 
\begin{equation*}
g_n(z) = \displaystyle\sum_{k\geq -1}\frac{\xi_k(z)}{n^k},
\end{equation*}
in (\ref{eq:gngn}), and a comparison of coefficients of the powers in $n$, yields 
\begin{equation*}
32\xi^6_{-1}(1-2\xi_{-1})=0, \quad \mathcal{O}(n^7).
\end{equation*}
Therefore $\xi_{-1}=0$, or $\xi_{-1}=1/2$. The solution $\xi_{-1}=0$ leads
to $\xi_k(z)=0$ for all $k\geq -1$, and hence we choose $\xi_{-1}=1/2$. For $%
\mathcal{O}(n^6)$, we have 
\begin{equation*}
-\left(\xi_0+\frac{3z^4}{8}\right) =0\longrightarrow \xi_0 = -\frac{3z^4}{8}.
\end{equation*}
The next term, for $\mathcal{O} (n^5)$ gives 0, and continuing this way we
obtain (\ref{eq:gnasymp}).
\end{proof}

\begin{corollary}
For $z=\mathcal{O}(1)$, the following asymptotic expansion holds 
\begin{equation*}
2\gamma_n(z)(\gamma_{n+1}(z)+\gamma_n(z)+\gamma_{n-1}(z)) \sim \frac{3z^4}{8}%
+\frac{3z^4}{16}n^{-2}+\frac{9z^8}{32}n^{-3}+\frac{3z^4(22+27z^8)}{256}%
n^{-4}+\mathcal{O}(n^{-5}).
\end{equation*}
Replacing 
\begin{equation*}
\gamma_n(z) = \sum_{k\geq 0} \frac{\omega_k(z)}{n^k}
\end{equation*}
in (\ref{eq:Freud2}) gives

\begin{equation*}
\gamma_n(z) \sim \frac{z^2}{4}+\frac{z^2}{16}n^{-2}+\frac{3z^6}{32}n^{-3}+%
\frac{z^2\left(4-8z^4+27z^6\right)}{256}n^{-4}+\mathcal{O}(n^{-5}).
\end{equation*}
\end{corollary}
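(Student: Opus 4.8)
The first asymptotic formula I would simply read off from the preceding theorem. By the definition \eqref{eq:gn} one has $2\gamma_n(z)(\gamma_{n+1}(z)+\gamma_n(z)+\gamma_{n-1}(z))=\tfrac{n}{2}-g_n(z)$, so substituting the expansion \eqref{eq:gnasymp} of $g_n(z)$ and cancelling the leading term $n/2$ produces exactly the claimed series; no further work is needed.

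For the expansion of $\gamma_n(z)$ the plan is to insert the formal ansatz $\gamma_n(z)=\sum_{k\ge 0}\omega_k(z)\,n^{-k}$ into the fifth-order relation \eqref{eq:Freud2} and match coefficients of powers of $n$. The technical nuisance is that \eqref{eq:Freud2} involves the shifted coefficients $\gamma_{n\pm1}(z)$ and $\gamma_{n\pm2}(z)$; each of these I would re-expand about $n$ through $(n+j)^{-k}=n^{-k}\sum_{m\ge0}\binom{-k}{m}(j/n)^m$, so that both sides of \eqref{eq:Freud2} become formal Laurent series in $n$ with coefficients polynomial in the $\omega_k(z)$. The computation shortens considerably if one first uses the already established expansion \eqref{eq:gnasymp}: abbreviating $A_m:=2\gamma_m(\gamma_{m+1}+\gamma_m+\gamma_{m-1})=\tfrac{m}{2}-g_m$, the relation \eqref{eq:Freud2} is equivalent to $\gamma_n=z^2\bigl(\tfrac{n}{2}-A_n\bigr)^2\big/\bigl[(n+\tfrac12-A_n-A_{n+1})(n-\tfrac12-A_n-A_{n-1})\bigr]$ (which is just \eqref{eq:z^2gn^2} solved for $\gamma_n$), and each $A_m$ has the known expansion $\tfrac{3z^4}{8}+\tfrac{3z^4}{16}m^{-2}+\cdots$; expanding this explicit ratio in powers of $1/n$ then yields $\gamma_n(z)$.

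Matching coefficients order by order, the top power $n^2$ gives $\omega_0^2=z^4/16$; since the weight $e^{-x^4}$ is positive on $[-z,z]$, $\bm{u}$ is positive-definite and $\gamma_n(z)=h_n/h_{n-1}>0$, so $\omega_0=\tfrac{z^2}{4}$ — the negative root is discarded exactly as the branch $\xi_{-1}=0$ was discarded in the proof of the previous theorem. With $\omega_0$ fixed, the coefficient of $n^{1}$ is linear in $\omega_1$ and forces $\omega_1=0$, and the coefficients of $n^{0}$, $n^{-1}$, $n^{-2}$ are then linear in $\omega_2$, $\omega_3$, $\omega_4$ respectively, yielding $\omega_2=\tfrac{z^2}{16}$, $\omega_3=\tfrac{3z^6}{32}$ and the asserted value of $\omega_4$, with remainder $\mathcal{O}(n^{-5})$. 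The genuine obstacle here is purely computational stamina: \eqref{eq:Freud2} is cubic in the recurrence coefficients, so the shift expansions proliferate monomials quickly and tracking every contribution to each power of $n$ down to $n^{-2}$ is lengthy — in practice I would carry it out with a computer algebra system. Finally I would record, as in the previous theorem, that the argument is formal (the existence of an expansion $\gamma_n(z)=\sum_{k\ge 0}\omega_k(z)n^{-k}$ is assumed rather than proved), and that substituting the resulting series back into \eqref{eq:gn} must reproduce \eqref{eq:gnasymp}, which provides an independent check on every coefficient.
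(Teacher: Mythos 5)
Your proposal is correct and follows essentially the same route the paper intends: the first expansion is read off from the identity $2\gamma_n(\gamma_{n+1}+\gamma_n+\gamma_{n-1})=\tfrac{n}{2}-g_n$ together with \eqref{eq:gnasymp}, and the second comes from substituting the ansatz $\gamma_n=\sum_{k\geq 0}\omega_k(z)n^{-k}$ into \eqref{eq:Freud2} and matching powers of $n$, selecting the positive root $\omega_0=z^2/4$ by positive-definiteness. Your shortcut of solving \eqref{eq:z^2gn^2} for $\gamma_n$ and expanding the explicit ratio is not really a different method, since \eqref{eq:Freud2} is precisely \eqref{eq:z^2gn^2} with the $g$'s written out, but it is a clean way to organize the computation and it reproduces the stated coefficients.
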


\begin{remark}
Notice that, at leading order 
\begin{equation*}
xP_n(x;z) \sim P_{n+1}(x;z) + \frac{z^2}{4}P_{n-1}(x;z), \quad n\rightarrow
\infty,
\end{equation*}
and therefore 
\begin{equation*}
P_n(x;z) \sim z^n U_n\biggl(\frac{x}{z}\biggr), \quad n\rightarrow \infty,
\end{equation*}
where $(U_n)_{n\geq0}$ denotes the sequence of monic Chebyshev polynomials
of the second kind.

%defined by $U_{-1}(x) = 0$, $U_0(x)=1$ and
%\begin{equation*}
%    xU_n(x) = U_{n+1}(x)+\frac{1}{4}U_{n-1}(x), \quad n \geq 1.
%  \end{equation*}
In particular, denoting by $x_{n,k}$ the zeros of the polynomials $P_n(x;z)$%
, it follows that 
\begin{equation*}
x_{n,k}\sim z\cos\biggl(\frac{k\pi}{n+1}\biggr), \quad 1\leq k \leq n, \quad
n \rightarrow \infty.
\end{equation*}
\end{remark}

\section{Structure relation and differential equation}

It is well known that semiclassical polynomials are solutions of a linear
ODE with polynomial coefficients. In this section we find the ODE (in $x$)
satisfied by $P_n(x; z).$

\subsection{Structure relation}

An essential characteristic of semiclassical MOPS is the representation of $%
\partial_xP_n$ in terms of a finite number of terms of the sequence $%
(P_n)_{n\geq0}$, which is known in the literature as first structure relation, see \cite{Maroni91}.

\begin{theorem}
The polynomials $P_n(x;z)$ satisfy the differential-difference relation

\begin{equation}
\phi(x;z)\partial_x P_{n+1} (x)= (n+1)P_{n+2}(x)
+B_nP_{n}(x)+C_nP_{n-2}(x)+D_nP_{n-4}(x),  \label{eq:structure}
\end{equation}
where \flushleft
\begin{align}
B_n &= 4\gamma_{n+1}\biggl[\gamma_{n+2}\biggl(\gamma_{n+3}+\gamma_{n+2}+%
\gamma_{n+1}+\gamma_n-z^2\biggr) +\gamma_{n+1}\biggl(\gamma_{n+2}+%
\gamma_{n+1}+\gamma_n-z^2\biggr)  \notag \\
&\quad+\gamma_{n}\biggl(\gamma_{n+1}+\gamma_n+\gamma_{n-1}-z^2\biggr)-\frac{1%
}{2}-\frac{n}{4}\biggr],  \label{eq:Bn}
\end{align}
\begin{equation}
C_n=
4\gamma_{n+1}\gamma_{n}\gamma_{n-1}(\gamma_{n+2}+\gamma_{n+1}+\gamma_{n}+%
\gamma_{n-1}+\gamma_{n-2}-z^2),  \label{eq:Cn}
\end{equation}
and 
\begin{equation}
D_n = 4\gamma_{n+1}\gamma_{n}\gamma_{n-1}\gamma_{n-2}\gamma_{n-3}.
\label{eq:Dn}
\end{equation}
\end{theorem}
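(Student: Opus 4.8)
The plan is to follow the standard route for semiclassical families. Since $\deg\phi=2$, the polynomial $\phi(x;z)\partial_x P_{n+1}$ has degree $n+2$, so it admits a finite expansion $\phi(x;z)\partial_x P_{n+1}(x)=\sum_{k\ge 0}a_{n,k}P_{n+2-2k}(x)$ in the orthogonal basis, with coefficients $a_{n,k}=\bigl<\bm{u},\phi(\partial_x P_{n+1})P_{n+2-2k}\bigr>/h_{n+2-2k}$. Only even shifts appear: $\bm{u}$ is symmetric, so by \eqref{eq:symmetry} $P_{n+1}$ has parity $n+1$, hence $\partial_x P_{n+1}$ has parity $n$; since $\phi$ is even, $\phi\,\partial_x P_{n+1}$ has the parity of $n$, and therefore $\bigl<\bm{u},(\phi\,\partial_x P_{n+1})P_j\bigr>=0$ whenever $j\not\equiv n\pmod 2$. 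Thus the proof reduces to computing $a_{n,0},a_{n,1},a_{n,2},a_{n,3}$ and checking $a_{n,k}=0$ for $k\ge 4$.

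For the latter I would use the Pearson equation \eqref{eq:PearsonFreud}: combining the distributional identities $i.$ and $ii.$ of Section 2 it gives $\bigl<\bm{u},\phi q'\bigr>=\bigl<\bm{u},\psi q\bigr>$ for every polynomial $q$; taking $q=P_{n+1}P_{n+2-2k}$ and applying the product rule, this is exactly identity \eqref{eq:FLeq} with shifted indices, namely
\[
a_{n,k}\,h_{n+2-2k}=\bigl<\bm{u},\psi P_{n+1}P_{n+2-2k}\bigr>-\bigl<\bm{u},\phi P_{n+1}\,\partial_x P_{n+2-2k}\bigr>.
\]
Since $\deg\psi=5$, the first term vanishes as soon as $\deg(\psi P_{n+2-2k})=n+7-2k<n+1$, i.e. for $k\ge 4$, while the second vanishes as soon as $\deg(\phi\,\partial_x P_{n+2-2k})=n+3-2k<n+1$, i.e. for $k\ge 2$. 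This both proves the termination at $P_{n-4}$ and reduces the cases $k=2,3$ to evaluating $\bigl<\bm{u},\psi P_{n+1}P_{n+2-2k}\bigr>$ alone.

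The coefficient $a_{n,0}$ comes from comparing leading terms: $\phi\,\partial_x P_{n+1}=(n+1)x^{n+2}+\mathcal{O}(x^{n})$ forces $a_{n,0}=n+1$. For $a_{n,2}$ and $a_{n,3}$ I would expand $x^5P_{n+1-2k}$ (and, for $k=2$, also $x^3P_{n-2}$) by iterating the three-term recurrence through \eqref{eq:3trrSn2} and \eqref{eq:3trrSn3}, read off the coefficient of $P_{n+1}$, and turn $h_{n+1}$ into $h_{n-2}$ or $h_{n-4}$ by repeated use of \eqref{eq:gammah_m}. For $k=3$ only the monic leading term of $x^5P_{n-4}$ contributes, giving $\bigl<\bm{u},\psi P_{n+1}P_{n-4}\bigr>=4h_{n+1}$ and hence \eqref{eq:Dn}; the $k=2$ case yields \eqref{eq:Cn}, with the factor $\gamma_{n+2}+\gamma_{n+1}+\gamma_n+\gamma_{n-1}+\gamma_{n-2}-z^2$ emerging as the coefficient of $P_{n+1}$ in $\psi P_{n-2}$ expressed through the $\gamma$'s.

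Finally, for $a_{n,1}=B_n$ both pieces are essentially already computed: $\bigl<\bm{u},\psi P_{n+1}P_n\bigr>=\bigl<\bm{u},\psi P_nP_{n+1}\bigr>$ is exactly the quantity evaluated just before \eqref{eq:Freudn} in the proof of the Laguerre--Freud equation \eqref{eq:FreudLaguerre}, and $\bigl<\bm{u},\phi P_{n+1}\,\partial_x P_n\bigr>=\bigl<\bm{u},\phi(\partial_x P_n)P_{n+1}\bigr>=n\gamma_{n+1}h_n$ as in that proof; subtracting, dividing by $h_n$, and collecting the cubic-in-$\gamma$ terms yields \eqref{eq:Bn}. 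I expect this last regrouping to be the only genuine obstacle: rewriting the raw iterated-recurrence expansion of $\psi P_nP_{n+1}$ into the compact factored form \eqref{eq:Bn}, the analogous manipulation for $C_n$ being much lighter. The small cases $n\le 3$ are covered by the standard conventions $\gamma_0=\gamma_{-1}=\gamma_{-2}=\gamma_{-3}=0$ and $P_{-1}=P_{-2}=\cdots=0$, under which the $C_nP_{n-2}$ and $D_nP_{n-4}$ terms drop out consistently.
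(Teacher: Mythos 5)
Your proof is correct, and its first half --- the expansion of $\phi(x;z)\partial_xP_{n+1}$ in the orthogonal basis, the parity argument killing every other coefficient, and the Pearson-plus-degree-count argument showing the expansion terminates at $P_{n-4}$ with leading coefficient $n+1$ --- coincides with the paper's. Where you genuinely diverge is in how $B_n$, $C_n$, $D_n$ are identified. You compute each coefficient directly from
\begin{equation*}
a_{n,k}\,h_{n+2-2k}=\bigl<\bm{u},\psi P_{n+1}P_{n+2-2k}\bigr>-\bigl<\bm{u},\phi P_{n+1}\,\partial_x P_{n+2-2k}\bigr>,
\end{equation*}
expanding via \eqref{eq:3trrSn2}--\eqref{eq:3trrSn3} and converting the ratio $h_{n+1}/h_{n+2-2k}$ into a product of $\gamma$'s by \eqref{eq:gammah_m}; in particular your observation that the $k=1$ case consists of exactly the two inner products already evaluated in the proof of \eqref{eq:FreudLaguerre} makes $B_n$ essentially free, and the $k=2,3$ cases reduce to reading off the coefficient of $P_{n+1}$ in $x^5P_{n-2}$ and $x^5P_{n-4}$. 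The paper instead differentiates the three-term recurrence, multiplies by $\phi$, and compares coefficients of $P_{n+1},P_{n-1},P_{n-3},P_{n-5}$ to obtain the compatibility relations \eqref{eq:AnBn}--\eqref{eq:WalterDn}, which it then solves by telescoping with the initial values $D_4$, $C_2$, $B_0$ computed by integration by parts. Your route is more direct and avoids the telescoping sums and the separate computation of initial conditions; the paper's route has the side benefit of producing \eqref{eq:AnBn}, which it later remarks is equivalent to the Laguerre--Freud equation. I checked that your direct evaluations reproduce \eqref{eq:Bn}, \eqref{eq:Cn} and \eqref{eq:Dn} exactly, so both arguments are sound.
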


\begin{proof}
Here we follow the approach given in \cite{VAssche2018} \color{black}. From
the expansion of the polynomial $\phi(x;z)\partial_x P_{n+1}(x)$ in the
polynomial basis $(P_k)_{0\leq k\leq n+2 }$, we have 
\begin{equation*}
\phi(x;z)\partial_x P_{n+1}(x) = \displaystyle\sum_{k=0}^{n+2} d_{n,k} P_k
(x).
\end{equation*}
Since the linear functional $\bm{u}$ is symmetric and $\phi(x;z)$ is an even
polynomial in $x$, we have 
\begin{equation}
h_kd_{n,k} = \bigl<\bm{u}, \phi \partial_x P_{n+1} P_k \bigr> = 0, \quad
n+1\equiv k \text{ mod(2)}.  \label{eq:dnkstructure}
\end{equation}

From the Pearson's equation (\ref{eq:Pearson}) we get 
\begin{align*}
h_kd_{n,k} &= \bigl< \bm{u},\phi (\partial_x P_{n+1})P_k\bigr> = \bigl< %
\bm{u},\phi\partial_x(P_{n+1}P_k)\bigr>-\bigl< \bm{u}, \phi P_{n+1}
\partial_x P_k \bigr> \\
& =\bigl< \bm{u},\psi P_{n+1}P_k\bigr> - \bigl< \bm{u},\phi
P_{n+1}\partial_x P_k\bigr>.
\end{align*}
Since $(P_n)_{n\geq 0}$ is a MOPS, we conclude that $d_{n,k}=0$ for all $%
0\leq k < n-4$, and since we work with monic polynomials we have $d_{n,n+2}
= n+1$. Hence, we obtain (\ref{eq:structure}).\newline

Taking derivatives in (\ref{eq:3trr}), multiplying the resulting expression
by $\phi(x;z)$ and then using (\ref{eq:structure}) to replace the
derivatives we get 
\begin{align*}
& x^2P_{n+1}(x) -z^2P_{n+1}(x) +x\bigl[(n+1)P_{n+2}(x)
+B_nP_{n}(x)+C_nP_{n-2}(x)+D_nP_{n-4}(x)\bigr] \\
&= (n+3)P_{n+3}(x) +B_{n+1}P_{n+1}(x)+C_{n+1}P_{n-1}(x)+D_{n+1}P_{n-3}(x) \\
&\quad +\gamma_{n+1}\bigl[nP_{n+1}(x)
+B_{n-1}P_{n-1}(x)+C_{n-1}P_{n-3}(x)+D_{n-1}P_{n-5}(x)\bigr].
\end{align*}

From (\ref{eq:3trrSn}) and (\ref{eq:3trrSn2}) one finds 
\begin{align*}
&P_{n+3} (x)+ (\gamma_{n+2} + \gamma_{n+1}) P_{n+1} (x)+ \gamma_n
\gamma_{n+1} P_{n-1} (x)- z^2 P_{n+1}(x) \\
&\quad + (n+1)(P_{n+3} (x)+ \gamma_{n+2} P_{n+1}(x)) + B_n(P_{n+1}(x) +
\gamma_{n} P_{n-1}(x)) \\
&\quad + C_n(P_{n-1}(x) + \gamma_{n-2} P_{n-3}(x)) + D_n(P_{n-3}(x) +
\gamma_{n-4} P_{n-5}(x)) \\
&= (n+2) P_{n+3}(x) + B_{n+1} P_{n+1}(x) + C_{n+1} P_{n-1}(x) + D_{n+1}
P_{n-3}(x) \\
&\quad + \gamma_{n+1} \left( n P_{n+1}(x) + B_{n-1} P_{n-1}(x) + C_{n-1}
P_{n-3}(x) + D_{n-1} P_{n-5}(x) \right).
\end{align*}
Comparing the coefficients of $P_{n+1}$, $P_{n-1}$, $P_{n-3}$ and $P_{n-5}$
we get 
\begin{equation}
P_{n+1} \Longrightarrow \gamma_{n+2}+\gamma_{n+1} -z^2 + \gamma_{n+2}(n+1) +
B_n = B_{n+1}+n\gamma_{n+1},  \label{eq:AnBn}
\end{equation}
\begin{equation}
P_{n-1}\Longrightarrow \gamma_{n+1}\gamma_{n}+B_n\gamma_{n}+C_n =
C_{n+1}+\gamma_{n+1}B_{n-1},  \label{eq:BnCn}
\end{equation}
\begin{equation}
P_{n-3}\Longrightarrow C_n\gamma_{n-2} +D_n = D_{n+1}+C_{n-1}\gamma_{n+1},
\label{eq:CnDn}
\end{equation}
\begin{equation}
P_{n-5} \Longrightarrow D_n\gamma_{n-4} = \gamma_nD_{n-1}.
\label{eq:WalterDn}
\end{equation}
On the other hand, from (\ref{eq:WalterDn}) we deduce 
\begin{equation*}
\frac{D_n}{\gamma_{n+1}\gamma_{n}\gamma_{n-1}\gamma_{n-2}\gamma_{n-3}} = 
\frac{D_{n-1}}{\gamma_{n}\gamma_{n-1}\gamma_{n-2}\gamma_{n-3}\gamma_{n-4}},
\end{equation*}
i. e., $D_n / \gamma_{n+1}\gamma_{n}\gamma_{n-1}\gamma_{n-2}\gamma_{n-3}$ is
a constant. As a consequence, 
\begin{equation}
D_n = \kappa \gamma_{n+1}\gamma_{n}\gamma_{n-1}\gamma_{n-2}\gamma_{n-3},
\quad n\geq 4 ,  \label{eq:D4}
\end{equation}
for some nonzero constant $\kappa$. \newline
\newline
To find $\kappa$ we use (\ref{eq:structure}) and integrate with $P_0(x)=1$ 
\begin{equation*}
h_0 D_4 = \bigl< \bm{u},\phi \partial_x P_5\bigr>.
\end{equation*}
An integration by parts and using the Pearson's equation (\ref{eq:Pearson}) 
\begin{equation*}
h_0D_4 = \bigl< \bm{u},\psi P_5\bigr> = 4h_5.
\end{equation*}
Since $\gamma_{n}=h_n/h_{n-1}$ for $n\geq 1$ we obtain $D_4 =
4\gamma_5\gamma_4\gamma_3\gamma_2\gamma_1$. By direct comparison with (\ref%
{eq:D4}) for $n=4$ we conclude that $\kappa = 4$.\newline
\newline
Now replacing (\ref{eq:D4}) in (\ref{eq:CnDn}) we have 
\begin{equation*}
\frac{C_n}{\gamma_{n+1}\gamma_n\gamma_{n-1}}-\frac{C_{n-1}}{%
\gamma_n\gamma_{n-1}\gamma_{n-2}} = 4\left(\gamma_{n+2}-\gamma_{n-3}\right).
\end{equation*}
Summing from $3$ to $n$ gives 
\begin{equation*}
\frac{C_n}{\gamma_{n+1}\gamma_{n}\gamma_{n-1}}-\frac{C_2}{%
\gamma_3\gamma_2\gamma_1} =
4\left(\gamma_{n+2}+\gamma_{n+1}+\gamma_{n}+\gamma_{n-1}+\gamma_{n-2}-%
\gamma_1-\gamma_2-\gamma_3-\gamma_4\right).
\end{equation*}
Making use of (\ref{eq:structure}) and (\ref{eq:gammah_m}) we see that,
after integration by parts, 
\begin{equation*}
C_2 = 4\gamma_3\gamma_2\gamma_1(\gamma_4+\gamma_3+\gamma_2+\gamma_1-z^2).
\end{equation*}
Then (\ref{eq:Cn}) holds for $n\geq 3$. Similarly, using (\ref{eq:Cn}) in (%
\ref{eq:BnCn}), we get 
\begin{align*}
&\frac{B_n}{\gamma_{n+1}}-\frac{B_{n-1}}{\gamma_n} = 4\biggl[\gamma_{n+2}%
\biggl(\gamma_{n+3}+\gamma_{n+2}+\gamma_{n+1}+\gamma_{n}-z^2\biggr) %
-\gamma_{n-1}\biggl(\gamma_{n+1}+\gamma_n+\gamma_{n-1}+\gamma_{n-2}-z^2%
\biggr)-\frac{1}{4}\biggr],
\end{align*}
and summing from $1$ to $n$, we obtain 
\begin{align*}
\frac{B_n}{\gamma_{n+1}}-\frac{B_0}{\gamma_1} &= 4\biggl[\gamma_{n+2}\biggl(%
\gamma_{n+3}+\gamma_{n+2}+\gamma_{n+1}+\gamma_n-z^2\biggr) +\gamma_{n+1}%
\biggl(\gamma_{n+2}+\gamma_{n+1}+\gamma_n-z^2\biggr) \\
&\quad+\gamma_{n}\biggl(\gamma_{n+1}+\gamma_n+\gamma_{n-1}-z^2\biggr) %
-\gamma_{1}(\gamma_1+\gamma_2-z^2) -\gamma_2(\gamma_3+\gamma_2+\gamma_1-z^2)-%
\frac{n}{4}\biggr],
\end{align*}
% \begin{align*}
%     B_1 = 4\gamma_2\gamma_3(\gamma_1+\gamma_2+\gamma_3+\gamma_4 -z^2) +\frac{\gamma_2}{\gamma_1}B_0-\gamma_2
% \end{align*}
% and
% \begin{align*}
% B_n &= 4 \gamma_{n+1} \left( \gamma_{n+2} (\gamma_{n+1}+\gamma_{n+2}+\gamma_{n+3})+ \gamma_{n-1} \gamma_{n}+ (\gamma_{n}+\gamma_{n+1}+\gamma_{n+2}) (\gamma_{n}+\gamma_{n+1}-z^2) \right. \\
% & \quad \left. -\gamma_{3} (\gamma_{2}+\gamma_{3}+\gamma_{4})   - (\gamma_{1}+\gamma_{2}+\gamma_{3}) (\gamma_{1}+\gamma_{2}-z^2) + \frac{1}{4}-\frac{n}{4} \right)+\frac{\gamma_{n+1}}{\gamma_2}B_1
% \end{align*}
for $n\geq 1$.\newline
\newline
From (\ref{eq:gammah_m}) and (\ref{eq:structure}) we have

\begin{equation*}
B_0 = 4\gamma_1\left[\gamma_1(\gamma_2+\gamma_1-z^2)+\gamma_2(\gamma_3+%
\gamma_2+\gamma_1-z^2)\right],
\end{equation*}
and then we obtain (\ref{eq:Bn}).
\end{proof}

\begin{remark}
Notice that (\ref{eq:AnBn}) is equivalent to (\ref{eq:FreudLaguerre}) with $%
n\rightarrow n+1$.
\end{remark}

\subsection{Second order differential equation}

Let $(P_n)_{n\geq 0}$ be a MOPS with respect to a weight function $\omega(x)
= exp(-v(x))$ supported on an interval $[a,b]\subset \mathbb{R}$, either
bounded or unbounded, such that

\begin{equation*}
\displaystyle\int_{a}^b P_n(x)P_m(x) \omega(x) dx = h_n \delta_{n,m}.
\end{equation*}

Then the polynomials $P_n(x)$ satisfy the general TTRR (\ref{eq:3trr}).
Moreover, it has been shown, see \cite{Bauldry90,BonanClark90} and also \cite%
{ChenIsmail97,IsmailWimp98}, that under certain assumptions on $\omega(x)$
the polynomials $P_n$ satisfy the differential-difference relation

\begin{equation*}
P^{\prime }_n(x) = A_n(x)P_{n-1}(x)-B_n(x)P_n(x),
\end{equation*}
where $A_n(x)$ and $B_n(x)$ are explicitly given in terms of $\omega(x)$ and
the evaluation of $P_n$ at the end points $a$ and $b$. In particular, if $%
\omega(x)$ does not vanish at the endpoints of the interval of
orthogonality, then $A_n(x)$ and $B_n(x)$ take the form \cite{ChenIsmail97} 
\begin{equation}
A_n(x) = \frac{\omega(b^-)P^2_n(b^-)}{h_{n-1}(b-x)}+\frac{%
\omega(a^+)P^2_n(a^+)}{h_{n-1}(x-a)}+\displaystyle\int_{a}^b \frac{v^{\prime
}(x)-v^{\prime }(y)}{h_{n-1}(x-y)}P^2_n(y)w(y)dy,  \label{eq:AnIsm}
\end{equation}
and 
\begin{align}
B_n(x) = \frac{\omega(b^-)P_n(b^-)P_{n-1}(b^-)}{h_{n-1}(b-x)}&+\frac{%
\omega(a^+)P_n(a^+)P_{n-1}(a^+)}{h_{n-1}(x-a)}  \notag \\
&+\displaystyle\int_{a}^b \frac{v^{\prime }(x)-v^{\prime }(y)}{h_{n-1}(x-y)}%
P_n(y)P_{n-1}(y)w(y)dy.  \label{eq:BnIsm}
\end{align}
Notice that in \cite{ChenIsmail97} the authors proved the result for
orthonormal polynomials whilst (\ref{eq:AnIsm}) and (\ref{eq:BnIsm}) are for
monic orthogonal polynomials. For the sequence $\left(P_n(x;z)\right)_{n\geq
0}$ we have the following result.

\begin{proposition}
The polynomials $P_n(x;z)$ satisfy the differential-difference recurrence
relation 
\begin{equation}
\phi(x;z)\partial_x P_n(x;z) = 4\gamma_n(z)\mathcal{A}_n(x;z)P_{n-1}(x;z)-%
\mathcal{B}_n(x;z)P_{n}(x;z),  \label{eq:diffAnBnFreud}
\end{equation}
where 
\begin{align}
\mathcal{A}_n(x;z) = & \gamma_{n+1}(z)
\left(\gamma_{n+2}(z)+\gamma_{n+1}(z)+\gamma_{n}(z)\right)  \notag \\
& +\gamma_n(z)(\gamma_{n+1}(z)+\gamma_n(z)+\gamma_{n-1}(z))
\label{eq:AnFreud} \\
& + \phi(x;z)(x^2+\gamma_{n+1}(z)+\gamma_n(z))-\frac{n}{2}-\frac{1}{4} , 
\notag
\end{align}
and 
\begin{equation}
\mathcal{B}_n(x;z) = x\left[4\gamma_n(z)(\gamma_{n+1}(z)+\gamma_n(z)+%
\gamma_{n-1}(z)+\phi(x;z))-n\right].  \label{eq:BnFreud}
\end{equation}
Moreover, 
\begin{equation}
\mathcal{B}_n(x;z) + \mathcal{B}_{n+1}(x;z) =4x(\mathcal{A}_n(x;z)
-x^2\phi(x;z)).  \label{eq:BnAnFreud}
\end{equation}
\end{proposition}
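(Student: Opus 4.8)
The plan is to specialize the Chen--Ismail type ladder relations \eqref{eq:AnIsm}--\eqref{eq:BnIsm} to the weight $\omega(x)=e^{-x^{4}}$ on $[a,b]=[-z,z]$ and then to clear denominators with $\phi(x;z)=x^{2}-z^{2}$. Here $v(x)=x^{4}$, so $v'(x)=4x^{3}$ and $\frac{v'(x)-v'(y)}{x-y}=4(x^{2}+xy+y^{2})$ is a genuine polynomial, while $\omega$ is positive and does not vanish at the endpoints, with $\omega(-z^{+})=\omega(z^{-})=e^{-z^{4}}$. Hence \eqref{eq:AnIsm}--\eqref{eq:BnIsm} apply directly and give, for $n\ge 1$, a relation $\partial_x P_n(x;z)=A_n(x;z)P_{n-1}(x;z)-B_n(x;z)P_n(x;z)$ (the case $n=0$ being trivial, since $P_0\equiv 1$ and $\gamma_0=0$).

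First I would simplify the two endpoint fractions by means of the symmetry $P_n(-z;z)=(-1)^{n}P_n(z;z)$ from \eqref{eq:symmetry}: thus $P_n^{2}(-z;z)=P_n^{2}(z;z)$, whereas $P_n(-z;z)P_{n-1}(-z;z)=-P_n(z;z)P_{n-1}(z;z)$, so that combining the fractions through $\frac{1}{z-x}+\frac{1}{z+x}=\frac{-2z}{\phi(x;z)}$ and $\frac{1}{z-x}-\frac{1}{z+x}=\frac{-2x}{\phi(x;z)}$ collapses the boundary part of $A_n$, respectively of $B_n$, to a single term carrying the factor $1/\phi(x;z)$. For the integral terms I expand $4(x^{2}+xy+y^{2})$ and evaluate the resulting moments against $\bm{u}$ using orthogonality and \eqref{eq:3trrFreud}: $\bigl<\bm{u},P_n^{2}\bigr>=h_n$, $\bigl<\bm{u},xP_n^{2}\bigr>=0$, $\bigl<\bm{u},x^{2}P_n^{2}\bigr>=\bigl<\bm{u},(xP_n)^{2}\bigr>=(\gamma_{n+1}+\gamma_n)h_n$, and likewise $\bigl<\bm{u},P_nP_{n-1}\bigr>=0$, $\bigl<\bm{u},xP_nP_{n-1}\bigr>=\gamma_nh_{n-1}$, $\bigl<\bm{u},x^{2}P_nP_{n-1}\bigr>=0$. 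This displays $A_n$ and $B_n$ as a polynomial part in $x$ plus a boundary part proportional to $P_n^{2}(z;z)e^{-z^{4}}/(h_{n-1}\phi(x;z))$ and to $P_n(z;z)P_{n-1}(z;z)e^{-z^{4}}/(h_{n-1}\phi(x;z))$, respectively.

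Next I multiply $\partial_x P_n=A_nP_{n-1}-B_nP_n$ through by $\phi(x;z)$, so that the two boundary quantities appear without their denominator. The leftover factor $2z$ then cancels exactly when I substitute $P_n^{2}(z;z)e^{-z^{4}}=\frac{(2n+1)-4[\gamma_{n+2}\gamma_{n+1}+(\gamma_n+\gamma_{n+1})^{2}+\gamma_n\gamma_{n-1}]}{2z}\,h_n$ from \eqref{eq:Pn(z;z)^2} and $P_n(z;z)P_{n-1}(z;z)e^{-z^{4}}=\bigl(\frac{n}{2}-2\gamma_n(\gamma_{n+1}+\gamma_n+\gamma_{n-1})\bigr)h_{n-1}$ from \eqref{eq:Pn(z;z)}, together with $\gamma_n=h_n/h_{n-1}$ from \eqref{eq:gammah_m}. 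Collecting the coefficients of $P_{n-1}(x;z)$ and of $P_n(x;z)$ then gives $\phi(x;z)\partial_x P_n(x;z)=\phi A_n\,P_{n-1}(x;z)-\phi B_n\,P_n(x;z)$ with $\phi A_n=4\gamma_n\mathcal{A}_n$ and $\phi B_n=\mathcal{B}_n$; the only step beyond routine bookkeeping is the identity $(\gamma_n+\gamma_{n+1})^{2}=\gamma_n^{2}+2\gamma_n\gamma_{n+1}+\gamma_{n+1}^{2}$, after which the coefficients match \eqref{eq:AnFreud} and \eqref{eq:BnFreud} term by term (notably, no use of the Laguerre--Freud equations is needed along this route).

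Finally, \eqref{eq:BnAnFreud} is a direct algebraic consequence of the formulas just obtained: substituting \eqref{eq:BnFreud} for $\mathcal{B}_n$ and $\mathcal{B}_{n+1}$ and \eqref{eq:AnFreud} for $\mathcal{A}_n$, the $-x^{2}\phi(x;z)$ subtracted inside $4x(\mathcal{A}_n-x^{2}\phi)$ cancels the $x^{2}\phi(x;z)$ contained in $\mathcal{A}_n$, and the surviving terms $4\gamma_n(\gamma_{n+1}+\gamma_n+\gamma_{n-1})x$, $4\gamma_{n+1}(\gamma_{n+2}+\gamma_{n+1}+\gamma_n)x$, $4(\gamma_n+\gamma_{n+1})\phi(x;z)x$ and $-(2n+1)x$ reproduce $\mathcal{B}_n+\mathcal{B}_{n+1}$ term by term. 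I expect the main obstacle throughout to be the bookkeeping of the endpoint terms --- getting their signs right under the symmetry relation, and recognizing that multiplication by $\phi(x;z)$ is precisely what turns the rational boundary contributions into polynomials that can be evaluated in closed form via \eqref{eq:Pn(z;z)} and \eqref{eq:Pn(z;z)^2}. As an alternative one could avoid \eqref{eq:AnIsm}--\eqref{eq:BnIsm} altogether and obtain \eqref{eq:diffAnBnFreud} from the structure relation \eqref{eq:structure} with $n\mapsto n-1$, reducing $P_{n+1}$, $P_{n-3}$ and $P_{n-5}$ to the basis $\{P_{n-1},P_n\}$ by iterating \eqref{eq:3trrFreud} downwards and then simplifying with the Laguerre--Freud equation \eqref{eq:FreudLaguerre} together with the explicit forms \eqref{eq:Bn}--\eqref{eq:Dn} of $B_n$, $C_n$, $D_n$; this is more elementary but computationally heavier.
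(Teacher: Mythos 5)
Your proof of the main relation \eqref{eq:diffAnBnFreud} follows essentially the same route as the paper: specialize the Chen--Ismail formulas \eqref{eq:AnIsm}--\eqref{eq:BnIsm} to $v(x)=x^4$ on $[-z,z]$, collapse the two endpoint terms via the symmetry $P_n(-z;z)=(-1)^nP_n(z;z)$ into single terms with denominator $\phi(x;z)$, evaluate the integral parts by orthogonality and the recurrence, substitute the closed forms \eqref{eq:Pn(z;z)} and \eqref{eq:Pn(z;z)^2} for the boundary quantities, and multiply through by $\phi$; your moment evaluations and sign bookkeeping are all correct, and the resulting identifications $\phi A_n=4\gamma_n\mathcal{A}_n$, $\phi B_n=\mathcal{B}_n$ check out. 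The one place you genuinely diverge is the supplementary condition \eqref{eq:BnAnFreud}: the paper re-derives it from the integral representation \eqref{eq:BnSym}, summing $B_n+B_{n+1}$ with the help of the recurrence \eqref{eq:3trrFreud} and \eqref{eq:gammah_m} and then integrating $4(x^2+xy+y^2)(y-x)=v'(y)-v'(x)$ by parts to produce the $-4x^3$ term, whereas you simply substitute the explicit formulas \eqref{eq:AnFreud}--\eqref{eq:BnFreud} and cancel; I verified that this direct substitution does close up exactly, so your argument is valid and in fact shorter. What the paper's integral route buys is that \eqref{eq:BnAnFreud} is exhibited as an instance of the general ladder-operator compatibility condition, independent of having already computed $\mathcal{A}_n$ and $\mathcal{B}_n$ in closed form, which serves as a consistency check on those formulas; your route treats it as a corollary of them. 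Either is acceptable as a proof of the stated proposition.
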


\begin{proof}
Let $h_n=\bigl<\bm{u},P^2_n\bigr>$ and let $\omega(x) = e^{-v(x)}$ with $%
v(x)=x^4$ be the weight function associated with the linear functional $%
\bm{u}$ defined in (\ref{eq:FreudLF}). Under the assumptions of \cite%
{ChenIsmail97} we can write

\begin{equation}
\partial_xP_n(x;z) = A_n(x)P_{n-1}(x;z)-B_n(x)P_n(x;z),  \label{eq:diffAnBn}
\end{equation}
where $A_n(x)$ and $B_n(x)$ are given by (\ref{eq:AnIsm}) and (\ref{eq:BnIsm}%
), respectively. \newline

From (\ref{eq:symmetry}) we have 
\begin{equation}
A_n(x) = -\frac{2zP^2_{n}(z;z)e^{-z^4}}{h_{n-1}} + \frac{4}{h_{n-1}}%
\displaystyle\int_{-z}^z (x^2+xy+y^2)P^2_{n}(y;z) e^{-y^4}dy,
\label{eq:AnSym}
\end{equation}
and 
\begin{equation}
B_n(x) = -\frac{2xP_{n-1}(z;z)P_{n}(z;z)e^{-z^4}}{h_{n-1}\phi}+\frac{4}{%
h_{n-1}}\displaystyle\int _{-z}^z(x^2+xy+y^2)P_{n-1}(y;z)P_{n}(y;z)
e^{-y^4}dy.  \label{eq:BnSym}
\end{equation}
From (\ref{eq:Pn(z;z)^2}), (\ref{eq:3trrSn}), (\ref{eq:3trrSn2}) and since $%
P_n(x;z)$ is orthogonal with respect to the linear functional $\bm{u}$ we
get 
\begin{equation*}
A_n(x) = \frac{4\left[h_{n+2}+(\gamma_n+\gamma_{n+1})^2h_n+\gamma^2_n%
\gamma_{n-1}h_{n-1}\right]-(2n+1)h_n}{\phi h_{n-1}}+\frac{%
4\left(x^2h_n+h_{n+1}+\gamma^2_nh_{n-1}\right)}{h_{n-1}}.
\end{equation*}
Similarly, from (\ref{eq:Pn(z;z)}), (\ref{eq:3trrSn}) and (\ref{eq:3trrSn2})
we obtain 
\begin{equation*}
B_n(x) =\frac{2x}{\phi}\left[2\gamma_n(\gamma_{n+1}+\gamma_n+\gamma_{n-1})-%
\frac{n}{2}\right]+ 4x\frac{ h_n}{h_{n-1}}.
\end{equation*}
Using (\ref{eq:gammah_m}), multiplying (\ref{eq:diffAnBn}) by $\phi$ and
arranging terms we get (\ref{eq:diffAnBnFreud}). \smallskip

On the other hand, from (\ref{eq:3trrFreud}) and (\ref{eq:gammah_m}) 
\begin{equation*}
\frac{P_{n-1}(x)}{h_{n-1}}+\frac{P_{n+1}(x)}{h_n} = \frac{xP_n(x)}{%
\gamma_{n}h_{n-1}}.
\end{equation*}
Then from (\ref{eq:BnSym}) we have

\begin{equation*}
B_n(x) + B_{n+1}(x) = \frac{-2xzP^2_n(z;z)e^{-z^4}}{\phi\gamma_{n} h_{n-1}}+%
\frac{4}{\gamma_{n}h_{n-1}} \displaystyle%
\int_{-z}^z(x^2+xy+y^2)yP^2_n(y;z)e^{-y^4}dy,
\end{equation*}
while from (\ref{eq:AnSym}) we get 
\begin{equation*}
B_n(x) + B_{n+1}(x) = \frac{x}{\gamma_{n}}A_n(x)+\frac{4}{\gamma_{n}h_{n-1}} %
\displaystyle\int_{-z}^z(x^2+xy+y^2)(y-x)P^2_n(y;z)e^{-y^4}dy.
\end{equation*}
Since $4(x^2+xy+y^2)(y-x)=4y^3-4x^3=v^{\prime }(y)-v^{\prime }(x)$,
integration by parts and (\ref{eq:gammah_m}) yield 
\begin{equation*}
B_n(x)+B_{n+1}(x) = \frac{x}{\gamma_{n}}A_n(x)-4x^3.
\end{equation*}
Multiplying by $\phi(x;z)$ (\ref{eq:BnAnFreud}) follows.
\end{proof}

\begin{corollary}
Let the differential operator $L_{1,n}$ be defined by 
\begin{equation}
L_{1,n} = \phi(x;z)\partial_x+\mathcal{B}_n(x;z), \quad n\in \mathbb{N}.
\label{eq:L1}
\end{equation}
Then 
\begin{equation}
L_{1,n}P_n(x;z) = 4\gamma_n(z)\mathcal{A}_n(x;z)P_{n-1}(x;z).
\label{eq:L1Pn}
\end{equation}
On the other hand, using (\ref{eq:3trrFreud}) and (\ref{eq:BnAnFreud}) in (%
\ref{eq:L1Pn}) we define the differential operator $L_{2,n}$ as 
\begin{equation}
L_{2,n} = -\phi(x;z)\partial_x + \mathcal{B}_{n}(x;z)+4x^3\phi(x;z).
\label{eq:L2}
\end{equation}
Then 
\begin{equation}
L_{2,n} P_{n-1}(x;z) = 4\mathcal{A}_{n-1}(x;z)P_{n}(x;z), \quad n\in\mathbb{N%
}.  \label{eq:L2Pn}
\end{equation}
\end{corollary}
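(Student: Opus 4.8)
The plan is to obtain both identities as purely formal consequences of the differential-difference relation \eqref{eq:diffAnBnFreud} of the preceding proposition, together with the three-term recurrence \eqref{eq:3trrFreud} and the companion identity \eqref{eq:BnAnFreud}; no fresh integration or computation with the functional $\bm{u}$ is required. For \eqref{eq:L1Pn} I would simply transpose the term $\mathcal{B}_n(x;z)P_n(x;z)$ in \eqref{eq:diffAnBnFreud} to the left-hand side, so that $\phi(x;z)\partial_x P_n(x;z)+\mathcal{B}_n(x;z)P_n(x;z)=4\gamma_n(z)\mathcal{A}_n(x;z)P_{n-1}(x;z)$, and recognise the left-hand side as $L_{1,n}P_n(x;z)$ by the definition \eqref{eq:L1}.

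For \eqref{eq:L2Pn} I would start from \eqref{eq:L1Pn} with $n$ replaced by $n-1$, that is, $\phi(x;z)\partial_x P_{n-1}(x;z)+\mathcal{B}_{n-1}(x;z)P_{n-1}(x;z)=4\gamma_{n-1}(z)\mathcal{A}_{n-1}(x;z)P_{n-2}(x;z)$, and use the three-term recurrence \eqref{eq:3trrFreud} at level $n-1$ in the form $\gamma_{n-1}(z)P_{n-2}(x;z)=xP_{n-1}(x;z)-P_n(x;z)$ to eliminate $P_{n-2}$. Rearranging then gives $-\phi(x;z)\partial_x P_{n-1}(x;z)+\bigl(4x\mathcal{A}_{n-1}(x;z)-\mathcal{B}_{n-1}(x;z)\bigr)P_{n-1}(x;z)=4\mathcal{A}_{n-1}(x;z)P_n(x;z)$, so it remains to check that the coefficient of $P_{n-1}(x;z)$ equals $\mathcal{B}_n(x;z)+4x^3\phi(x;z)$, which is precisely the zeroth-order part of $L_{2,n}$ in \eqref{eq:L2}. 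This is exactly \eqref{eq:BnAnFreud} after the shift $n\mapsto n-1$, which reads $\mathcal{B}_{n-1}(x;z)+\mathcal{B}_n(x;z)=4x\bigl(\mathcal{A}_{n-1}(x;z)-x^2\phi(x;z)\bigr)$, hence $4x\mathcal{A}_{n-1}(x;z)-\mathcal{B}_{n-1}(x;z)=\mathcal{B}_n(x;z)+4x^3\phi(x;z)$. Substituting yields \eqref{eq:L2Pn}.

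There is no genuine obstacle here: the substantive content has already been established in the proposition, and what is left is bookkeeping of the index shifts. The only point requiring a line of care is the lower range of $n$: the shifted forms of \eqref{eq:diffAnBnFreud} and \eqref{eq:BnAnFreud} involve $\gamma_{n-1}$, $\mathcal{B}_{n-1}$ and $P_{n-2}$, so for the boundary value $n=1$ one should read them with the conventions $\gamma_0=0$, $\mathcal{B}_0(x;z)=0$ and $P_{-1}(x;z)=0$, under which $\mathcal{B}_1(x;z)+4x^3\phi(x;z)=4x\mathcal{A}_0(x;z)$ and $L_{2,1}P_0(x;z)=4\mathcal{A}_0(x;z)P_1(x;z)$ hold directly (recall $P_0=1$ and $P_1(x;z)=x$). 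For $n\geq 2$ the argument above applies verbatim.
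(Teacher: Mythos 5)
Your proposal is correct and follows exactly the route the paper intends: \eqref{eq:L1Pn} is just \eqref{eq:diffAnBnFreud} rearranged, and \eqref{eq:L2Pn} comes from the shifted relation $L_{1,n-1}P_{n-1}=4\gamma_{n-1}\mathcal{A}_{n-1}P_{n-2}$ combined with the recurrence \eqref{eq:3trrFreud} and the identity \eqref{eq:BnAnFreud} shifted to $n-1$, which is precisely what the corollary's own wording prescribes. Your extra care with the boundary case $n=1$ (conventions $\gamma_0=0$, $\mathcal{B}_0=0$, $P_{-1}=0$) is a welcome addition that the paper leaves implicit.
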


\begin{remark}
\label{remark3} Notice that $\mathcal{A}_n(x;z)$ in (\ref{eq:AnFreud}) is a
monic polynomial of degree $4$ 
\begin{equation}
\mathcal{A}_n(x;z) = x^4+b_n(z)x^2+c_n(z), \label{bicuartica}
\end{equation}
where 
\begin{equation*}
b_n(z) = \gamma_{n+1}(z)+\gamma_n(z)-z^2,
\end{equation*}
and 
\begin{equation*}
c_n(z) = \gamma_n(z) (\gamma_{n+1}(z) + \gamma_{n}(z)+\gamma_{n-1}(z)-z^2) +
\gamma_{n+1}(z) ( \gamma_{n+2}(z) + \gamma_{n+1}(z)+\gamma_n(z)-z^2) -\frac{n%
}{2}-\frac{1}{4}.
\end{equation*}
Then, for $n\geq 1$, the polynomial $\mathcal{A}_n(x;z)$ has two real zeros
and two conjugated complex zeros.

\begin{equation}
\eta_{1,2}(n,z) = \pm \sqrt{\frac{1}{2}\left(\sqrt{b^2_n(z)-4c_n(z)}-b_n(z)\right)}, \label{etas}
\end{equation}
and 
\begin{equation}
\zeta_{1,2}(n,z) = \pm i\sqrt{\frac{1}{2}\left(\sqrt{b^2_n(z)-4c_n(z)}+b_n(z)\right)}. \label{zetas}
\end{equation}
\end{remark}

As a consequence of (\ref{eq:diffAnBnFreud}), we can now derive a second
order linear differential equation in $x$ for the polynomials $P_n(x;z)$.

\begin{theorem}[Holonomic differential equation]
The polynomials $P_{n}(x;z)$ are solution of the second order linear
differential equation 
\begin{equation}
\partial^2_xP_n(x;z)+R(x;n)\partial_xP_n(x;z)+S(x;n)P_n(x;z) = 0,
\label{eq:PnODE}
\end{equation}
\end{theorem}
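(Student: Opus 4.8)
The plan is to eliminate $P_{n-1}(x;z)$ between the two ladder relations obtained in the preceding Corollary. Since $\mathcal{A}_n(x;z)$ is a genuine (nonzero) polynomial of degree $4$ by Remark~\ref{remark3}, the relation (\ref{eq:L1Pn}), i.e. $L_{1,n}P_n = 4\gamma_n(z)\mathcal{A}_n(x;z)P_{n-1}$ with $L_{1,n}$ as in (\ref{eq:L1}), can be solved for $P_{n-1}$:
\[
P_{n-1}(x;z) = \frac{1}{4\gamma_n(z)\,\mathcal{A}_n(x;z)}\,L_{1,n}P_n(x;z)
= \frac{\phi(x;z)\,\partial_xP_n(x;z)+\mathcal{B}_n(x;z)\,P_n(x;z)}{4\gamma_n(z)\,\mathcal{A}_n(x;z)}.
\]
First I would differentiate this quotient once in $x$; since $L_{2,n}$ in (\ref{eq:L2}) is a first-order differential operator, substituting both $P_{n-1}$ and $\partial_xP_{n-1}$ into the raising-type relation (\ref{eq:L2Pn}), $L_{2,n}P_{n-1}=4\mathcal{A}_{n-1}(x;z)P_n$, introduces $\partial_x^2P_n$ (through $\partial_x(\phi\,\partial_xP_n)$) together with $\partial_xP_n$ and $P_n$, and nothing else. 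Equivalently, this is just the operator identity $L_{2,n}\,\mathcal{A}_n^{-1}\,L_{1,n}\,P_n = 16\gamma_n(z)\,\mathcal{A}_{n-1}\,P_n$.

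Carrying this out and clearing the common denominator $4\gamma_n(z)\mathcal{A}_n(x;z)^2$ turns $L_{2,n}P_{n-1}=4\mathcal{A}_{n-1}P_n$ into
\[
\alpha_2(x;n)\,\partial_x^2P_n(x;z)+\alpha_1(x;n)\,\partial_xP_n(x;z)+\alpha_0(x;n)\,P_n(x;z)=0,
\]
where a short computation gives
$\alpha_2 = -\phi^2\mathcal{A}_n$,
$\alpha_1 = \phi^2\mathcal{A}_n' - \phi\phi'\mathcal{A}_n - 4x^3\phi^2\mathcal{A}_n$,
and
$\alpha_0 = \mathcal{A}_n\mathcal{B}_n^2 - \phi\mathcal{A}_n\mathcal{B}_n' + \phi\mathcal{A}_n'\mathcal{B}_n + 4x^3\phi\mathcal{A}_n\mathcal{B}_n - 16\gamma_n\mathcal{A}_{n-1}\mathcal{A}_n^2$ (all functions evaluated at $(x;z)$). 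Dividing by $\alpha_2$, which is licit as a rational identity since $\phi^2\mathcal{A}_n\not\equiv 0$, yields (\ref{eq:PnODE}) with
\[
R(x;n) = \frac{\phi'(x;z)}{\phi(x;z)}-\frac{\mathcal{A}_n'(x;z)}{\mathcal{A}_n(x;z)}-4x^3,
\qquad
S(x;n) = -\frac{\alpha_0(x;n)}{\phi(x;z)^2\,\mathcal{A}_n(x;z)}.
\]
I would then put $S(x;n)$ into closed form by inserting the explicit expressions (\ref{eq:AnFreud})--(\ref{eq:BnFreud}) for $\mathcal{A}_n$ and $\mathcal{B}_n$, the bicuartic form (\ref{bicuartica}), the identity (\ref{eq:BnAnFreud}) relating $\mathcal{B}_n+\mathcal{B}_{n+1}$ to $\mathcal{A}_n$, and the recurrence (\ref{eq:3trrFreud}) together with the Laguerre--Freud relations of Section~4 to reduce powers of $x$ and to cancel $\mathcal{A}_n$ out of the denominator where possible.

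Steps in order: (i) invert (\ref{eq:L1Pn}) for $P_{n-1}$; (ii) differentiate once; (iii) substitute into (\ref{eq:L2Pn}) and clear the denominator $\mathcal{A}_n^2$; (iv) read off $\alpha_2,\alpha_1,\alpha_0$ and normalize; (v) simplify $R$ and $S$. The computation is essentially bookkeeping; the only genuinely delicate points are that the elimination is non-degenerate — the second-order coefficient $\alpha_2=-\phi^2\mathcal{A}_n$ is a nonzero polynomial, and the compatibility of $L_{1,n}$ and $L_{2,n}$ guaranteed by (\ref{eq:BnAnFreud}) (which was used to define $L_{2,n}$) prevents the identity from collapsing to $0=0$ — and that dividing by $\phi(x;z)$ and $\mathcal{A}_n(x;z)$ only matters at their finitely many zeros, so (\ref{eq:PnODE}) is an identity of rational functions (or one simply keeps the polynomial form $\alpha_2\partial_x^2P_n+\alpha_1\partial_xP_n+\alpha_0P_n=0$). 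The main nuisance, not a real obstacle, is the final simplification of $S(x;n)=-\alpha_0/(\phi^2\mathcal{A}_n)$ into a compact expression: the raw numerator carries a factor $\mathcal{A}_n^2$ in the last term but $\mathcal{A}_n$ only to the first power elsewhere, so collapsing the quotient cleanly is where the explicit structure of $\mathcal{A}_n$, $\mathcal{B}_n$ and the Laguerre--Freud equations must be brought to bear.
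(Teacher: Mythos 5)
Your proposal is correct and follows essentially the same route as the paper: the paper likewise forms the factored identity $L_{2,n}\bigl(\mathcal{A}_n^{-1}L_{1,n}P_n\bigr)=16\gamma_n\mathcal{A}_{n-1}P_n$, expands the left-hand side, and multiplies by $-\mathcal{A}_n/\phi^2$ to read off $R(x;n)$ and $S(x;n)$. Two sign remarks: your intermediate $\alpha_1$ should carry $+4x^3\phi^2\mathcal{A}_n$ rather than $-4x^3\phi^2\mathcal{A}_n$ (your final $R$ is nonetheless the correct one), and your $S$ carries $+16\gamma_n\mathcal{A}_n\mathcal{A}_{n-1}/\phi^2$ where the paper prints a minus --- a leading-order check (the $x^4$ contributions of $-\mathcal{B}_n(\mathcal{B}_n+4x^3\phi)/\phi^2$ and $16\gamma_n\mathcal{A}_n\mathcal{A}_{n-1}/\phi^2$ must cancel so that $S=\mathcal{O}(x^2)$ and the ODE can hold for $P_n\sim x^n$) confirms that your sign is the right one.
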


where $R(x;n)$ and $S(x;n)$ are the rational functions given by 
\begin{equation*}
R(x;n) = -2x\frac{(2x^2\phi(x;z)-1)\mathcal{A}_n(x;z)+\phi(x;z)(%
\gamma_{n+1}(z)+\gamma_n(z)+2x^2-z^2)}{\phi(x;z)\mathcal{A}_n(x;z)},
\end{equation*}
and 
\begin{align*}
S(x;n) &= \frac{1}{\phi^2\mathcal{A}_n(x;z)} \Biggl[ -\mathcal{A}_n(x;z) 
\mathcal{B}_n(x;z) \left(\mathcal{B}_{n}(x;z) + 4 \phi(x;z) x^3\right) \\
&\quad - 2 x\mathcal{B}_n(x;z) \phi(x;z) \left(2 x^2 - z^2 + \gamma_{n}(z) +
\gamma_{n+1}(z)\right) \\
&\quad + \mathcal{A}_n(x;z) \phi(x;z) \Biggl(-n + 4 \gamma_{n}(z) \left(3
x^2 - z^2 + \gamma_{n-1}(z) + \gamma_{n}(z) + \gamma_{n+1}(z)\right)\Biggr)
\\
&\quad - 16\gamma_n(z)\mathcal{A}^2_n(x;z)\mathcal{A}_{n-1}(x;z) \Biggr].
\end{align*}

\begin{proof}
From (\ref{eq:L1Pn}) and (\ref{eq:L2Pn}) the polynomials $P_n(x;z)$ satisfy
the factored equation (for further details see \cite{ChenIsmail97}) 
\begin{equation}
L_{2,n}\left(\frac{1}{\mathcal{A}_n}\left(L_{1,n}P_n\right)\right) =
16\gamma_n\mathcal{A}_{n-1}P_n.  \label{eq:L2nL1nPn}
\end{equation}
On the other hand, from (\ref{eq:L1}) and (\ref{eq:L2}), we have 
\begin{align}
& L_{2,n}\left(\frac{1}{\mathcal{A}_n}\left(L_{1,n}P_n\right)\right) =
L_{2,n}\left(\frac{1}{\mathcal{A}_n}\left(\phi P^{\prime }_n+\mathcal{B}%
_nP_n\right)\right)  \notag \\
\notag \\
&= -\frac{\phi^2}{\mathcal{A}_n}\partial^2_x + \phi\left[\frac{\mathcal{A}%
^{\prime }_n}{\mathcal{A}^2_n}\phi+\frac{4x^3\phi}{\mathcal{A}_n}-\frac{%
\phi^{\prime }}{\mathcal{A}_n}\right]\partial_xP_n + \left[-\frac{\phi 
\mathcal{B}^{\prime }_n}{\mathcal{A}_n}+\phi \mathcal{B}_n\frac{\mathcal{A}%
^{\prime }_n}{\mathcal{A}^2_n}+\frac{\mathcal{B}_n}{\mathcal{A}_n}%
\left(4x^3\phi+\mathcal{B}_n\right)\right]P_n.  \label{eq:L2(L1Pn)}
\end{align}
Equating (\ref{eq:L2nL1nPn}) and (\ref{eq:L2(L1Pn)}) and multiplying the
resulting expression by $-\mathcal{A}_n/\phi^2$ we obtain

\begin{equation*}
R(x;n) = \frac{1}{\phi}\left[\phi^{\prime 3}\phi-\frac{\mathcal{A}^{\prime
}_n\phi}{\mathcal{A}_n}\right],
\end{equation*}
and 
\begin{equation*}
S(x;n) = \frac{1}{\phi^2}\left[\phi \mathcal{B}^{\prime }_n-\phi\mathcal{B}_n%
\frac{\mathcal{A}^{\prime }_n}{\mathcal{A}_n}-\mathcal{B}_n(4x^3\phi+%
\mathcal{B}_n)-16\gamma_n\mathcal{A}_n\mathcal{A}_{n-1}\right].
\end{equation*}
Thus, (\ref{eq:AnFreud}) and (\ref{eq:BnFreud}) yield the result.
\end{proof}

\section{Electrostatic interpretation}
\label{sect-Electr-Int}

It is very well known that the zeros of orthogonal polynomials with respect
to a positive definite linear functional are real, simple, and located in
the interior of the convex hull of the support of the positive Borel measure
of orthogonality, see \cite{Chihara78, ArdilaMarcellan21}. Thus, let $%
\{x_{n,k}(z)\}_{1\leq k \leq n}$ denote the zeros of $P_n(x)$ in an
increasing order, i. e., 
\begin{equation}
P_n(x_{n,k}(z)) = 0, \quad \text{for all } 1\leq k\leq n,  \label{eq:Pn(xnk)}
\end{equation}
and $x_{n,1}(z)<x_{n,2}(z)<\cdots< x_{n,n}(z) $. \newline

Next we will focus our attention on an electrostatic interpretation of the
zeros of the polynomial $P_n(x;z)$. To do that, let us consider a system of $%
n$ movable unit charges at positions $X=(x_1,x_2,...,x_n)$ and suppose that
the charges interact with each other under the presence of potential $V$.
Then the total energy of the system, $E(X)$, is 
\begin{equation}
E(X) = \displaystyle\sum^n_{k=1} V(x_k) -2\displaystyle\sum_{1\leq j<k\leq
n} \ln|x_j-x_k|.  \label{eq:E(X)}
\end{equation}
% The electrostatic equilibrium occurs in stationary points $X^*=(x^*_1, x^*_2,...,x^*_n)$ of the total energy $E(X)$ such that
% \begin{equation}
%     \frac{\partial E(X)}{\partial x_k}\biggr|_{X=X^*} = -\displaystyle\sum_{1\leq j<k\leq n} \frac{2}{x^*_k-x^*_j}+V'(x_k) =0,
%     \label{eq:Equilibrium}
% \end{equation}
% and this configuration is precisely the zeros of the corresponding orthogonal polynomials.
% In [Electrostatic Ismail] it is stated that if $\omega(x)>0$ on $(a,b)$, both $v(x)$ and $\ln(A_n)$ for $A_n(x)$ in (\ref{eq:AnIsm}) are twice continuously differentiable function on $(a,b)$, then the total energy $E(\bm{x})$ of a system of $n$ unit movable unit charged in $[a,b]$ at positions $\bm{x}=(\Tilde{x}_1,...,\Tilde{x}_n)$ under the influence of a logarithmic potential
% \begin{equation*}
%     E(\bm{x}) = \displaystyle\sum^n_{k=1} V(\Tilde{x}_k) -2\displaystyle\sum_{1\leq j<k\leq n} \ln|\Tilde{x}_j-\Tilde{x}_k|
% \end{equation*}
% has a unique point of global minimum which is precisely the zeros of the orthogonal polynomials $p_n$, i.e.

%       \begin{equation}
%        \left.\frac{\partial E}{\partial \Tilde{x}_k}\right|_{\bm{x}=x_{n,k}} = \displaystyle\sum^n_{k=1} V'(\Tilde{x}_{k}) -2\displaystyle\sum_{1\leq j<k\leq n} \frac{1}{{x}_{n,k}-{x}_{n,j}}=0, \quad 1\leq k\leq n
%        \label{eq:electroequilibrium}
%    \end{equation}

\begin{theorem}
The zeros of $P_n(x;z)$ are located at the equilibrium points of $n$ unit
charged particles located in the interval $(-z, z)$ under the influence of
external the potential

\begin{equation*}
V_n(x;z) = x^4 -\ln\lvert x^2-z^2\rvert +\ln\lvert
x^2-\zeta^2(n,z)\rvert+\ln\lvert x^2-\eta^2(n,z)\rvert,
\end{equation*}
where $\zeta(n,z)$ and $\eta(n,z)$ are given in Remark \ref{remark3}. 
\newline
\end{theorem}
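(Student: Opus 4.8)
The plan is to reduce the statement to the standard ``holonomic equation $\Rightarrow$ electrostatic equilibrium'' mechanism. Since $\bm{u}$ is positive-definite on $(-z,z)$, the zeros $x_{n,1}(z)<\cdots<x_{n,n}(z)$ in \eqref{eq:Pn(xnk)} are real, simple, and lie in $(-z,z)$. A configuration $X=(x_1,\dots,x_n)$ with $x_1<\cdots<x_n$ inside $(-z,z)$ is a critical point of the energy $E$ in \eqref{eq:E(X)} precisely when
\[
V_n'(x_k;z)=2\sum_{\substack{j=1\\ j\neq k}}^{n}\frac{1}{x_k-x_j},\qquad k=1,\dots,n.
\]
The term $-\ln|x^2-z^2|$ in $V_n$ is an infinite barrier at $\pm z$, so the charges stay in $(-z,z)$, and there $V_n(x;z)$ is smooth in $x$ because, by Remark~\ref{remark3}, the real zeros $\pm\eta(n,z)$ of $\mathcal{A}_n$ lie outside $[-z,z]$ while $\pm\zeta(n,z)$ are purely imaginary. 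Thus it is enough to verify the displayed system at $x_k=x_{n,k}(z)$.

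First I would use $P_n(x;z)=\prod_{j=1}^{n}(x-x_{n,j}(z))$, which gives the classical identity
\[
\frac{\partial_x^2 P_n(x_{n,k};z)}{\partial_x P_n(x_{n,k};z)}=2\sum_{j\neq k}\frac{1}{x_{n,k}-x_{n,j}},
\]
where $\partial_x P_n(x_{n,k};z)\neq 0$ since the zeros are simple. On the other hand, evaluating the holonomic equation \eqref{eq:PnODE} at $x=x_{n,k}(z)$ and using $P_n(x_{n,k};z)=0$ gives $\partial_x^2 P_n(x_{n,k};z)+R(x_{n,k};n)\,\partial_x P_n(x_{n,k};z)=0$, so the same ratio equals $-R(x_{n,k};n)$. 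Comparing the two expressions, the equilibrium system is equivalent to $-R(x;n)=V_n'(x;z)$ evaluated at the zeros, and it actually suffices to establish this as an identity between rational functions.

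Finally, recall from the proof of the holonomic equation that
\[
R(x;n)=\frac{1}{\phi(x;z)}\Bigl(\phi'(x;z)-4x^3\phi(x;z)-\frac{\mathcal{A}_n'(x;z)\,\phi(x;z)}{\mathcal{A}_n(x;z)}\Bigr)=\frac{\phi'(x;z)}{\phi(x;z)}-4x^3-\frac{\mathcal{A}_n'(x;z)}{\mathcal{A}_n(x;z)}.
\]
By Remark~\ref{remark3} we have the factorization $\mathcal{A}_n(x;z)=(x^2-\eta^2(n,z))(x^2-\zeta^2(n,z))$, and since $\phi(x;z)=x^2-z^2$, a partial-fraction expansion yields
\[
-R(x;n)=4x^3-\frac{2x}{x^2-z^2}+\frac{2x}{x^2-\eta^2(n,z)}+\frac{2x}{x^2-\zeta^2(n,z)}.
\]
Antidifferentiating in $x$ gives precisely $V_n(x;z)=x^4-\ln|x^2-z^2|+\ln|x^2-\zeta^2(n,z)|+\ln|x^2-\eta^2(n,z)|$ up to an additive constant, hence $V_n'(x;z)=-R(x;n)$ and the equilibrium equations hold at the zeros $x_{n,k}(z)$, which is exactly the claim.

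The steps above are mostly routine; the points requiring care are the bookkeeping ones, namely checking that $-R$ coincides with $V_n'$ exactly (with no leftover terms that merely vanish at the $x_{n,k}$), and reading off from Remark~\ref{remark3} both the factorization of $\mathcal{A}_n$ and that its real roots remain outside $[-z,z]$ (which follows from the size of $b_n$ and $c_n$, e.g., $c_n\to-\infty$). If one also wanted the stronger statement that these critical points constitute the unique minimizer of $E$ on $\{x_1<\cdots<x_n\}\subset(-z,z)^n$, one would additionally verify that the Hessian of $E$ is positive definite there, using that $-2\sum_{j<k}\ln|x_j-x_k|$ is convex on that set and $V_n$ is smooth on $(-z,z)$; the statement as phrased needs only the critical-point characterization.
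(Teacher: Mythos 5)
Your proposal is correct and follows essentially the same route as the paper: the log-derivative identity for $\partial_x^2P_n/\partial_xP_n$ at a simple zero, evaluation of the holonomic equation \eqref{eq:PnODE} at $x_{n,k}$ to identify that ratio with $-R(x_{n,k};n)=4x_{n,k}^3+\mathcal{A}_n'/\mathcal{A}_n-\phi'/\phi$, and recognition of this as $V_n'$ so that the zeros are critical points of $E$. Your explicit factorization $\mathcal{A}_n=(x^2-\eta^2)(x^2-\zeta^2)$ and the partial-fraction check that $-R=V_n'$ exactly (rather than merely at the zeros) is a slightly more careful rendering of the same step the paper performs by writing $V_n=x^4+\ln(\mathcal{A}_n/\phi)$.
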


\begin{proof}
If 
\begin{equation*}
P_n(x;z) = \displaystyle\prod_{k=1}^n(x-x_{n,k}),
\end{equation*}
then \cite[Chapter 10]{ArdilaMarcellan21} 
\begin{equation}
\left(\frac{\partial^2_xP_n}{\partial_xP_n}\right)_{x=x_{n,k}} = -%
\displaystyle\sum_{1\leq j<k\leq n} \frac{2}{x_{n,k}-x_{n,j}}.
\label{eq:DDPxnk}
\end{equation}
Evaluating $(\ref{eq:PnODE})$ at $x_{n,k}$ we get 
\begin{equation}
\left(\frac{\partial^2_xP_n}{\partial_xP_n}\right)_{x=x_{n,k}} = 4x_{n,k}^3+%
\frac{\mathcal{A}_n^{\prime }(x_{n,k};z)}{\mathcal{A}_n(x_{n,k};z)}-\frac{%
\phi^{\prime }(x_{n,k};z)}{\phi(x_{n,k};z)}.  \label{eq:DDP(xnk;z)}
\end{equation}
Using (\ref{eq:DDP(xnk;z)}) in (\ref{eq:DDPxnk}) we have 
\begin{equation}
\displaystyle\sum_{1\leq j<k\leq n} \frac{2}{x_{n,k}-x_{n,j}} + 4x_{n,k}^3+%
\frac{\mathcal{A}_n^{\prime }(x_{n,k};z)}{\mathcal{A}_n(x_{n,k};z)}- \frac{%
\phi^{\prime }(x_{n,k};z)}{\phi(x_{n,k};z)} = 0.  \label{eq:GE(xnk)}
\end{equation}
% Comparing () and () we conclude that
%  \begin{equation*}
%      V_n(x;z) = x^4+\ln\left(\frac{\mathcal{A}_n(x;z)}{\phi(x;z)} \right) = 0
%  \end{equation*}
Notice that (\ref{eq:GE(xnk)}) is the gradient of (\ref{eq:E(X)}), at the
set of points $X^*=(x_{n,1},x_{n,2},...,x_{n,n})$, associated with the
external potential 
\begin{equation}
V_n(x;z) = x^4 + \ln \biggl(\frac{\mathcal{A}_n(x;z)}{\phi(x;z)}\biggr).  \label{eq:TotalPot}
\end{equation}
This means that the zeros of the truncated Freud polynomials $P_n(x;z)$ are critical points of the energy functional.
\end{proof}

\medskip

\begin{remark}
Notice that the total external potential has two terms 
\begin{equation}
V_n(x;z) = x^4+\ln\left(\frac{\mathcal{A}_n(x;z)}{\phi(x;z)} \right) = V_{%
\text{long}}(x) + V_{\text{n,short}}(x;z). \label{totalExtPoten}
\end{equation}
Following \cite{Ismail2000}, the long range potential is, as expected, $V_{%
\text{long}}(x) = x^4$ while the short range potential depends on the degree 
$n$ and is given by 
\begin{align*}
V_{n, \text{short}}(x;z) &= \ln\left(\frac{\mathcal{A}_n(x;z)}{\phi(x;z)}%
\right) \\
&= \ln \left| x-\eta_1(n,z)\right| + \ln \left| x-\eta_2(n,z)\right| +\ln
\left| x-\zeta_1(n,z)\right|+\ln \left| x-\zeta_2(n,z)\right| \\
&\quad - \ln \left| x-z\right|-\ln \left| x+z\right|.
\label{eq:Electromodel}
\end{align*}
We consider that the potential energy at $x$ of a point charge $q$ located at $t$ is $-q\ln\bigl|x-t\bigr|$. Then the external field is generated by two fixed charges $+1$ at $\pm z$ (due to a perturbation of the weight function) plus four fixed charges of magnitude $-1$; two of them at the real positions $\eta_1(n,z)$, $\eta_2(n,z)$, and the remaining ones at complex positions $\zeta_1(n,z)$ and $\zeta_2(n,z)$, according to remark \ref{remark3}.
\end{remark}

\section{The variable $z$}

In this section we will study $\bm{u}_{2n}$, $\gamma_n$ and the zeros of the polynomials $P_n(x;z)$ as functions of $z$.

\subsection{The moments}

\begin{proposition}
The even moments $\bm{u}_{2n} = \bigl<\bm{u},x^{2n}\bigr>$ related to the
linear functional $\bm{u}$ satisfy the following differential-recurrence
relations 
\begin{equation}
z\partial_z \bm{u}_{2n}(z) = (2n+1)\bm{u}_{2n}(z)-4\bm{u}_{2n+4}(z)
\label{eq:zupunto}
\end{equation}
and 
\begin{equation}
\partial_z\bm{u}_{2n+2}(z) = z^2\partial_z\bm{u}_{2n}(z).  \label{eq:u'}
\end{equation}
\end{proposition}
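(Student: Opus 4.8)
The plan is to differentiate the integral representation of the even moments directly with respect to $z$, producing a single elementary formula for $\partial_z\bm{u}_{2n}(z)$, and then to read off both identities from it, using the already-established moment recurrence (\ref{eq:recumomentsexp}) for the first one.

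First I would record that, by the definition (\ref{eq:FreudLF}) of $\bm{u}$ together with the symmetry of the integrand,
\[
\bm{u}_{2n}(z) = \int_{-z}^{z} x^{2n} e^{-x^{4}}\,dx = 2\int_{0}^{z} x^{2n} e^{-x^{4}}\,dx .
\]
Since $x^{2n}e^{-x^{4}}$ is continuous and carries no explicit $z$-dependence, the Leibniz rule for an integral with variable endpoints applies, and the two boundary contributions add (rather than cancel) because the integrand is even:
\[
\partial_z \bm{u}_{2n}(z) = z^{2n}e^{-z^{4}} - \bigl(-z\bigr)^{2n} e^{-(-z)^{4}}\cdot(-1) = 2\,z^{2n} e^{-z^{4}} .
\]
This one formula is the engine for the whole proposition.

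For (\ref{eq:u'}): replacing $n$ by $n+1$ gives $\partial_z \bm{u}_{2n+2}(z) = 2 z^{2n+2} e^{-z^{4}} = z^{2}\cdot 2 z^{2n} e^{-z^{4}} = z^{2}\,\partial_z \bm{u}_{2n}(z)$, which is exactly the claim. For (\ref{eq:zupunto}): multiplying the formula above by $z$ yields $z\,\partial_z \bm{u}_{2n}(z) = 2 z^{2n+1} e^{-z^{4}}$, and then I would invoke the recurrence (\ref{eq:recumomentsexp}), namely $4\bm{u}_{2n+4} = (2n+1)\bm{u}_{2n} - 2 z^{2n+1} e^{-z^{4}}$, solved for the boundary term as $2 z^{2n+1} e^{-z^{4}} = (2n+1)\bm{u}_{2n} - 4\bm{u}_{2n+4}$. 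Substituting this back gives $z\,\partial_z \bm{u}_{2n}(z) = (2n+1)\bm{u}_{2n}(z) - 4\bm{u}_{2n+4}(z)$, as desired.

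There is essentially no hard step; the only point that deserves a word of care is the justification of differentiating under the integral sign with both limits depending on $z$, and the sign bookkeeping that makes the two endpoint terms combine to $2z^{2n}e^{-z^{4}}$. As an alternative that does not even use (\ref{eq:recumomentsexp}), one could differentiate the closed form (\ref{eq:un})--(\ref{eq:evenmomentsTF}) for $\bm{u}_{2n}$ in terms of the lower incomplete gamma function, using $\partial_z \hat\gamma(a,z^{4}) = 4 z^{4a-1} e^{-z^{4}}$, and simplify to the same expression; but routing the argument through (\ref{eq:recumomentsexp}) keeps it shortest.
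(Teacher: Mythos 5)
Your proof is correct, and it takes a genuinely different (and more elementary) route than the paper's. You differentiate $\bm{u}_{2n}(z)=2\int_0^z x^{2n}e^{-x^4}\,dx$ directly via the fundamental theorem of calculus to get the single closed form $\partial_z\bm{u}_{2n}(z)=2z^{2n}e^{-z^4}$, from which (\ref{eq:u'}) is immediate and (\ref{eq:zupunto}) follows by substituting the already-established recurrence (\ref{eq:recumomentsexp}) for the boundary term $2z^{2n+1}e^{-z^4}$. The paper instead rescales via $x=zt$ to move the $z$-dependence into the integrand, so that differentiation under the integral sign produces the combination $\frac{2n+1}{z}\bm{u}_{2n}-\frac{4}{z}\bm{u}_{2n+4}$ directly — yielding (\ref{eq:zupunto}) without invoking (\ref{eq:recumomentsexp}) — and then proves (\ref{eq:u'}) by computing $\partial_z\bigl<\bm{u},\phi x^{2n}\bigr>$ in two ways and appealing to the four-term moment recurrence (\ref{eq:recumoment}). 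Your approach buys brevity and transparency: the identity $\partial_z\bm{u}_{2n}=2z^{2n}e^{-z^4}$ is strictly stronger than either stated relation and makes (\ref{eq:u'}) a one-line consequence, whereas the paper's proof of (\ref{eq:u'}) is considerably longer; the price is that your derivation of (\ref{eq:zupunto}) leans on (\ref{eq:recumomentsexp}), which is legitimate since that relation is established earlier in Section 3.2, while the paper's scaling argument is self-contained and generalizes more readily to moments of $\phi$-weighted functionals where no clean boundary evaluation is available. Your sign bookkeeping at the endpoints (the two contributions adding because the integrand is even) is the only delicate point, and you handle it correctly.
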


\begin{proof}
Setting $x=zt$ in \eqref{eq:evenmomentsTF}, we have 
%%%%%%%%%%%%%%%%%%%%%%%%%%%%%%%%%%%%%%%%%%%%%%%%%%%
\begin{equation}
\bm{u}_{2n}(z) = 2\displaystyle\int_{0}^z x^{2n}e^{-x^4}dx = 2z^{2n+1}%
\displaystyle\int_{0}^1 t^{2n}e^{-z^4t^4}dt.  \label{eq:moments(t)}
\end{equation}
Then 
\begin{align*}
\partial_z\bm{u}_{2n}(z) & = 2(2n+1)z^{2n}\displaystyle%
\int_{0}^1t^{2n}e^{-z^4t^4}dt -8z^{2n+4}\displaystyle%
\int_{0}^1t^{2n+4}e^{-z^4t^4}dt \\
& = \frac{(2n+1)}{z} \bm{u}_{2n}(z) -\frac{4}{z}\bm{u}_{2n+4}(z),
\end{align*}
which yields \eqref{eq:zupunto}. On the other hand, we have 
\begin{equation}
\partial_z\bigl<\bm{u},\phi x^{2n}\bigr> = \partial_z\bm{u}_{2n+2}-2z\bm{u}%
_{2n}-z^2\partial_z\bm{u}_{2n},  \label{eq:partialzphiu}
\end{equation}
where $\phi(x;z) = x^2-z^2$ was defined in (\ref{eq:PearsonFreud}).
Meanwhile, setting again $x=zt$, we get 
\begin{align*}
\partial_z\bigl<\bm{u},\phi x^{2n}\bigr> &= 2\partial_z\displaystyle%
\int_{0}^z \phi(x;z) x^{2n}e^{-x^4}dx \\
&= 2 \displaystyle\int_{0}^1 \partial_z\biggl(\phi(t;z)z^{2n+1} e^{-z^4t^4}%
\biggr) t^{2n} dt \\
& = 2\displaystyle\int_{0}^1\biggl((2n+3)z^{2n+2}-4z^{2n+6}t^4\biggr)%
t^{2n}(t^2-1)e^{-z^4t^4}dt.
\end{align*}
Taking into account \eqref{eq:moments(t)}, we obtain 
\begin{equation*}
\partial_z\bigl<\bm{u},\phi x^{2n}\bigr> = -\frac{1}{z}\biggl(4\bm{u}%
_{2n+6}-4z^2\bm{u}_{2n+4}-(2n+3)\bm{u}_{2n+2}+(2n+3)z^2\bm{u}_{2n}\biggr),
\end{equation*}
and making use of \eqref{eq:recumoment} one finds that 
\begin{equation}
\partial_z\bigl<\bm{u},\phi x^{2n}\bigr> = -2z\bm{u}_{2n}.
\label{eq:partialzphiu2}
\end{equation}
Equating \eqref{eq:partialzphiu} and \eqref{eq:partialzphiu2} yields %
\eqref{eq:u'}.
\end{proof}

\bigskip

Using the differential-recurrence relation (\ref{eq:u'}), we can obtain a
first order linear ODE in $z$ for the Stieltjes function $\mathbf{S}_{\bm{u}%
}(t;z)$.

\begin{proposition}
Let $\mathbf{S}_{\bm{u}}(t;z)$ be defined by (\ref{eq:StieltjesFreud}) and
let $\phi(x;z)$ be defined by (\ref{eq:PearsonFreud}). Then 
\begin{equation}
\phi(t;z)\partial_z\mathbf{S}_{\bm{u}}(t;z)= 2te^{-z^4}.
\label{eq:StieltjesFreud_z}
\end{equation}
\end{proposition}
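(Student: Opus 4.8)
The plan is to differentiate the Stieltjes series \eqref{eq:StieltjesFreud}, $\mathbf{S}_{\bm{u}}(t;z) = \sum_{n\ge 0} \bm{u}_{2n}(z)\,t^{-(2n+1)}$, term by term in $z$, then collapse the resulting series using the differential--recurrence relation \eqref{eq:u'}. First I would write
\[
\partial_z \mathbf{S}_{\bm{u}}(t;z) = \sum_{n=0}^{\infty} \frac{\partial_z \bm{u}_{2n}(z)}{t^{2n+1}},
\]
which is legitimate either as an identity of formal series in $t^{-1}$ with $z$-dependent coefficients, or, if one prefers an analytic statement, on the region $|t|>z$ where the series converges uniformly on compact subsets.

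Next I would multiply by $\phi(t;z)=t^2-z^2$ and split the product,
\[
\phi(t;z)\,\partial_z \mathbf{S}_{\bm{u}}(t;z) = \sum_{n=0}^{\infty} \frac{\partial_z \bm{u}_{2n}(z)}{t^{2n-1}} - z^2\sum_{n=0}^{\infty} \frac{\partial_z \bm{u}_{2n}(z)}{t^{2n+1}} .
\]
Shifting $n\mapsto n+1$ in the first sum peels off the single boundary term $t\,\partial_z \bm{u}_0(z)$ and leaves
\[
\phi(t;z)\,\partial_z \mathbf{S}_{\bm{u}}(t;z) = t\,\partial_z \bm{u}_0(z) + \sum_{n=0}^{\infty} \frac{\partial_z \bm{u}_{2n+2}(z) - z^2\,\partial_z \bm{u}_{2n}(z)}{t^{2n+1}} .
\]
By \eqref{eq:u'} every numerator in the remaining sum vanishes, so $\phi(t;z)\,\partial_z \mathbf{S}_{\bm{u}}(t;z) = t\,\partial_z \bm{u}_0(z)$.

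Finally I would evaluate $\partial_z \bm{u}_0(z)$ directly from the definition \eqref{eq:FreudLF}: since $\bm{u}_0(z)=\int_{-z}^{z} e^{-x^4}\,dx = 2\int_0^z e^{-x^4}\,dx$, the fundamental theorem of calculus gives $\partial_z \bm{u}_0(z) = 2e^{-z^4}$, and substituting this above yields \eqref{eq:StieltjesFreud_z}. The only genuinely delicate point is the index-shift bookkeeping that isolates the boundary term $t\,\partial_z\bm{u}_0(z)$ together with checking that \eqref{eq:u'} is being invoked exactly over the range $n\ge 0$ needed here; the rest is routine, and no separate treatment of low-order moments is required.
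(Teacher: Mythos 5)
Your proof is correct and follows essentially the same route as the paper: both reduce the claim to the relation $\partial_z\bm{u}_{2n+2}=z^2\partial_z\bm{u}_{2n}$ from \eqref{eq:u'} via a series rearrangement that isolates the boundary term $t\,\partial_z\bm{u}_0(z)$. The only (harmless) difference is that you evaluate $\partial_z\bm{u}_0(z)=2e^{-z^4}$ directly from the integral by the fundamental theorem of calculus, whereas the paper deduces it from the moment relations \eqref{eq:zupunto} and \eqref{eq:recumomentsexp}.
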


\begin{proof}
From (\ref{eq:StieltjesFreud}) we see that 
\begin{equation*}
\displaystyle\sum_{n\geq0}\frac{\bm{u}_{2n+2}(z)}{t^{2n+1}} = \displaystyle%
\sum_{n\geq 1} \frac{\bm{u}_{2n}(z)}{t^{2n-1}} = t^2\biggl(\displaystyle%
\sum_{n\geq1} \frac{\bm{u}_{2n}(z)}{t^{2n+1}}\biggr) = t^2\biggl(\mathbf{S}_{%
\bm{u}}(t;z)-\frac{\bm{u}_0(z)}{t}\biggr).
\end{equation*}
Taking the derivative with respect to $z$ and using (\ref{eq:u'}), we obtain 
\begin{equation*}
z^2\partial_z\mathbf{S}_{\bm{u}}(t;z) = t^2\partial_z\mathbf{S}_{\bm{u}%
}(t;z)-t\partial_z\bm{u}_0(z).
\end{equation*}
From \eqref{eq:zupunto} and \eqref{eq:recumomentsexp}, we get 
\begin{equation*}
\partial_z\bm{u}_0(z) = 2e^{-z^4}.
\end{equation*}
\end{proof}

\begin{remark}
Notice that (\ref{eq:StieltjesFreud_z}) with the initial condition $\mathbf{S%
}_{\bm{u}}(t;0)=0$ yields 
\begin{equation*}
\mathbf{S}_{\bm{u}}(t;z) = 2t\displaystyle\int_{0}^z\frac{e^{-x^4}}{t^2-x^2}%
dx,
\end{equation*}
which is (\ref{eq:StieltjesFreud}), provided that 
\begin{equation*}
\displaystyle\sum_{k\geq0}\frac{x^{2n}}{t^{2n+1}}=\frac{t}{t^2-x^2}.
\end{equation*}
\end{remark}

\subsection{Toda-type behaviour}

Next, we will obtain a differential recurrence relation for the sequence $%
(\gamma_n(z))_{n\geq 0}$.

\begin{theorem}
The functions $h_n(z)$ and $\gamma_n(z)$ satisfy the Toda-type equations 
\begin{equation*}
\vartheta \ln h_n(z) =4\biggl[\frac{n}{2}+\frac{1}{4} - \gamma_n\bigl(%
\gamma_{n+1}+\gamma_n+\gamma_{n-1}\bigr)-\gamma_{n+1}\bigl(%
\gamma_{n+2}+\gamma_{n+1}+\gamma_{n}\bigr)\biggr]
\end{equation*}
and 
\begin{equation*}
\vartheta \ln \gamma_{n}(z) = 4\biggl[\gamma_{n-1}\bigl(\gamma_n+%
\gamma_{n-1}+\gamma_{n-2}\bigr)-\gamma_{n+1}(z)\bigl(\gamma_{n+2}+%
\gamma_{n+1}+\gamma_{n}\bigr)+\frac{1}{2}\biggr],
\end{equation*}
\end{theorem}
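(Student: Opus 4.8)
The plan is to compute $\vartheta\ln h_n(z)=z\,\partial_z h_n(z)/h_n(z)$ directly from the integral representation $h_n(z)=\bigl<\bm{u},P_n^2\bigr>=\int_{-z}^{z}P_n(x;z)^2e^{-x^4}\,dx$, and then obtain the relation for $\gamma_n$ by telescoping, using $\gamma_n=h_n/h_{n-1}$ from (\ref{eq:gammah_m}).

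First I would differentiate $h_n(z)$ in $z$ by the Leibniz integral rule. Differentiating under the integral sign is legitimate since $e^{-x^4}$ and the coefficients of $P_n(x;z)$ are smooth and the integration range is compact. The boundary contributions are $P_n(z;z)^2e^{-z^4}+P_n(-z;z)^2e^{-z^4}$, which equal $2P_n(z;z)^2e^{-z^4}$ by the symmetry (\ref{eq:symmetry}) together with the evenness of $e^{-x^4}$. The interior term equals $2\bigl<\bm{u},P_n\,\partial_zP_n\bigr>$. Since $P_n(x;z)$ is monic in $x$ with leading coefficient independent of $z$, the polynomial $\partial_zP_n(x;z)$ has degree at most $n-1$ in $x$; differentiating (\ref{eq:symmetry}) with respect to $z$ shows $\partial_zP_n$ has the parity of $n$, hence degree at most $n-2$. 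By orthogonality this forces $\bigl<\bm{u},P_n\,\partial_zP_n\bigr>=0$, so the interior term vanishes and $\partial_zh_n(z)=2P_n(z;z)^2e^{-z^4}$.

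Next I would insert the already-established closed form (\ref{eq:Pn(z;z)^2}), namely $P_n(z;z)^2e^{-z^4}=\frac{(2n+1)-4[\gamma_{n+2}\gamma_{n+1}+(\gamma_n+\gamma_{n+1})^2+\gamma_n\gamma_{n-1}]}{2z}h_n$, and use the elementary identity $\gamma_{n+2}\gamma_{n+1}+(\gamma_n+\gamma_{n+1})^2+\gamma_n\gamma_{n-1}=\gamma_{n+1}(\gamma_{n+2}+\gamma_{n+1}+\gamma_n)+\gamma_n(\gamma_{n+1}+\gamma_n+\gamma_{n-1})$ to rewrite the bracket in the symmetric form appearing in the statement. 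Dividing by $h_n$ and multiplying by $z$ gives $\vartheta\ln h_n(z)=(2n+1)-4[\gamma_n(\gamma_{n+1}+\gamma_n+\gamma_{n-1})+\gamma_{n+1}(\gamma_{n+2}+\gamma_{n+1}+\gamma_n)]$, which is the first asserted equation once one writes $2n+1=4(\tfrac{n}{2}+\tfrac14)$. For the second equation I would use $\ln\gamma_n=\ln h_n-\ln h_{n-1}$ and apply the first identity at indices $n$ and $n-1$; the two copies of $4\gamma_n(\gamma_{n+1}+\gamma_n+\gamma_{n-1})$ cancel, leaving $\vartheta\ln\gamma_n=2+4[\gamma_{n-1}(\gamma_n+\gamma_{n-1}+\gamma_{n-2})-\gamma_{n+1}(\gamma_{n+2}+\gamma_{n+1}+\gamma_n)]$, which rearranges to the stated relation.

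The argument carries no genuine obstacle: its substance lies entirely in the earlier evaluation (\ref{eq:Pn(z;z)^2}) of $P_n(z;z)^2e^{-z^4}$. The only points requiring care are the parity-and-degree argument showing that the interior term vanishes, and tracking the conventions $\gamma_0=\gamma_{-1}=\gamma_{-2}=0$ so that both formulas remain valid for small $n$; the remaining manipulations are routine algebra.
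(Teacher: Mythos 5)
Your proposal is correct and follows essentially the same route as the paper: the paper likewise obtains $\partial_z\ln h_n=2P_n^2(z;z)e^{-z^4}/h_n$ by differentiating $h_n=\bigl<\bm{u},P_n^2\bigr>$ in $z$ (citing Lyu--Chen for this step rather than spelling out the Leibniz-rule and orthogonality argument you give), then substitutes \eqref{eq:Pn(z;z)^2} and telescopes via $\ln\gamma_n=\ln h_n-\ln h_{n-1}$. Your explicit justification that the interior term $\bigl<\bm{u},P_n\,\partial_zP_n\bigr>$ vanishes is a welcome addition, but the substance is identical.
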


where $\vartheta$ denotes the differential operator 
\begin{equation*}
\vartheta = z\partial_z.
\end{equation*}

\begin{proof}
Following \cite{Lyu}, taking the derivative of $h_n = \langle \bm{u}%
,P_n^2\rangle$ with respect to $z$, we have 
\begin{equation*}
\partial_z\ln h_n = \frac{2P^2_n(z;z)e^{-z^4}}{h_n}.
\end{equation*}
% Note that the second term on the right vanishes from (\ref{eq:OPS}) since deg($\partial_z P_n$) $\leq n-2$.

Then, from (\ref{eq:Pn(z;z)^2}) we get 
\begin{equation}
\vartheta \ln h_n = 2n+1 - 4\biggl[\gamma_n\bigl(\gamma_{n+1}+\gamma_n+%
\gamma_{n-1}\bigr)+\gamma_{n+1}\bigl(\gamma_{n+2}+\gamma_{n+1}+\gamma_{n}%
\bigr)\biggr].  \label{eq:h'}
\end{equation}
Taking into account (\ref{eq:gammah_m}), we obtain 
\begin{equation*}
\ln(\gamma_n) = \ln(h_n)-\ln(h_{n-1}),
\end{equation*}
and then 
\begin{equation*}
\vartheta \ln \gamma_n = \vartheta \ln h_n-\vartheta \ln h_{n-1}.
\end{equation*}
Hence, from (\ref{eq:h'}) our statement follows.
\end{proof}

% \subsection{Power series}
% The Maclaurin series of the function $\gamma_{n}(z)$ is
% \begin{equation*}
%     \gamma_n(z) = \displaystyle\sum_{k=1}^\infty \tau_{n,k}z^{2k}
% \end{equation*}
% with
% \begin{equation*}
%     \tau_{n,1} = \frac{n^2}{4n^2-1}
% \end{equation*}
% and
% \begin{equation*}
%     \tau_{n,k} = \cdots, \quad k\geq 2
% \end{equation*}

% \begin{proof}
%     From () we have
%     \begin{equation*}
%         \frac{z}{2}\gamma'_n(z) = \displaystyle\sum_{k=1}^\infty k \tau_{n,k} z^{2k}
%     \end{equation*}
% \end{proof}
% \subsection{Nonlinear ODE}
% \begin{theorem}
%     The function $\gamma_{n}(z)$ satisfies
%     \begin{equation*}
%         \cdot
%     \end{equation*}
% \end{theorem}
% \begin{proof}
%    \begin{equation*}
%        z^2g_{n-1}^2 = \gamma_{n-1}(g_{n-1}+g_{n})(g_{n-1}+g_{n-2})
%    \end{equation*}
% \end{proof}

\subsection{Dynamical behaviour of zeros}

We are now interested to study the motion of zeros of the polynomials $%
P_n(x;z)$ in terms of the parameter $z.$ Here we follow the approach given
in \cite{IsmailMa2011}. \newline

Let $\bm{v}$ be the linear functional defined by 
\begin{equation*}
\bigl< \bm{v},p(x)\bigr> = \displaystyle\int_{-z^{3/4}}^{z^{3/4}}p(x)
e^{-zx^4} dx, \quad p \in \mathbb{P}, \quad z>0.
\end{equation*}
If $(Q_n(x;z))_{n\geq 0}$ is the sequence of monic orthogonal polynomials
with respect to $\bm{v}$, then it is easy to check that 
\begin{equation}
Q_n(x;z) = \frac{1}{z^{n/4}}P_n(z^{1/4}x;z), \quad n\geq 0.  \label{eq:QnPn}
\end{equation}
Moreover 
\begin{equation*}
x Q_n(x;z) = Q_{n+1}(x;z)+\biggl[\frac{\gamma_n(z)}{\sqrt{z}}\biggr]%
Q_{n-1}(x;z)  \label{eq:xQn}
\end{equation*}
and

\begin{equation}
x^2Q_n(x;z) = Q_{n+2}(x;z) + \biggl[\frac{\gamma_{n+1}(z)+\gamma_n(z)}{\sqrt{%
z}}\biggr]Q_n(x;z) +\biggl[\frac{\gamma_n(z)\gamma_{n-1}(z)}{z}\biggr] %
Q_{n-2}(x;z) .  \label{eq:x^2Qn}
\end{equation}
Also it is easy to show that 
\begin{equation}
h_n(z) =z^{\frac{2n+1}{4}}||Q_n||^2,  \label{eq:hn||Qn||}
\end{equation}
where $||Q_n||^2=\bigl< \bm{v},Q^2_n\bigr>$ is the squared norm of the
polynomials $Q_n(x;z)$.\newline

Notice that (\ref{eq:gammah_m}) and (\ref{eq:hn||Qn||}) yield 
\begin{equation}
\frac{||Q_n||^2}{||Q_{n-1}||^2} = \frac{\gamma_n(z)}{\sqrt{z}}.
\label{eq:||Qn||/||Qn-1||}
\end{equation}
In a next step we can obtain a differential relation in $z$ for the
polynomials $Q_n(x;z)$ and, as a consequence, for the polynomials $P_n(x;z)$.

\begin{proposition}
The polynomials $P_n(x;z)$ satisfy 
\begin{align}
& -nP_n(x;z) +x\partial_xP_n(x;z)+z\dot{P}_n(x;z)  \notag \\
& = 4\gamma_n(z)\gamma_{n-1}(z)\biggl[\gamma_{n+1}(z)+\gamma_n(z)+%
\gamma_{n-1}(z)+\gamma_{n-2}(z)\biggr]P_{n-2}(x;z)  \label{eq:Pnpunto} \\
& \quad +4\biggl[\gamma_n(z)\gamma_{n-1}(z)\gamma_{n-2}(z)\gamma_{n-3}(z)%
\biggr]P_{n-4}(x;z),  \notag
\end{align}
where the dot $<\cdot>$ means derivative with respect to the variable $z$.
\end{proposition}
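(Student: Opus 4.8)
The plan is to recognise the left‑hand side of \eqref{eq:Pnpunto} as a polynomial of degree at most $n-2$ having the parity of $n$, and to compute its expansion in the orthogonal basis $(P_m(x;z))_{m\ge0}$ directly against $\bm u$. Write $G_n(x;z) := x\partial_xP_n(x;z)+z\dot P_n(x;z)-nP_n(x;z)$. Since $P_n(x;z)=x^n+\lambda_{n,n-2}(z)x^{n-2}+\mathcal O(x^{n-4})$ is monic with a $z$‑independent leading coefficient, the $x^n$‑terms of $x\partial_xP_n$ and $nP_n$ cancel and $z\dot P_n$ has no $x^n$‑term, so $\deg G_n\le n-2$; moreover $x\partial_x$ and $z\partial_z$ preserve parity in $x$, so $G_n$ has the parity of $n$, and by the symmetry of $\bm u$ it is enough to evaluate $\langle\bm u,G_nP_{n-2j}\rangle$ for $j\ge1$ and to check that it vanishes for $j\ge3$.

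The one genuinely new ingredient is the $z$‑derivative of $\bm u$. In \eqref{eq:FreudLF} the weight $e^{-x^4}$ does not depend on $z$, so differentiation under the integral sign through the endpoints gives $\langle\dot{\bm u},p\rangle=\bigl(p(z)+p(-z)\bigr)e^{-z^4}$ for all $p\in\mathbb P$. Differentiating $\langle\bm u,P_nP_m\rangle=0$ in $z$ for $0\le m<n$ and using $\deg\dot P_m<n$ yields $\langle\bm u,\dot P_nP_m\rangle=-\langle\dot{\bm u},P_nP_m\rangle=-2P_n(z;z)P_m(z;z)e^{-z^4}$ (the last equality by parity, since $n-m$ is even for $m=n-2j$). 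On the other hand, integrating $\langle\bm u,(x\partial_xP_n)P_{n-2j}\rangle=\langle\bm u,(\partial_xP_n)(xP_{n-2j})\rangle$ by parts with the bare weight $e^{-x^4}$ (so $v'(x)=4x^3$), the pieces $\langle\bm u,P_nP_{n-2j}\rangle$ and $\langle\bm u,P_n\,x\partial_xP_{n-2j}\rangle$ vanish for $j\ge1$, the interior term is $4\langle\bm u,x^4P_nP_{n-2j}\rangle$, and the endpoint term is $2zP_n(z;z)P_{n-2j}(z;z)e^{-z^4}$ (again by parity of $xP_nP_{n-2j}$). Adding $z\langle\bm u,\dot P_nP_{n-2j}\rangle$, the two endpoint contributions cancel exactly, leaving $\langle\bm u,G_nP_{n-2j}\rangle=4\langle\bm u,x^4P_nP_{n-2j}\rangle$ for every $j\ge1$.

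To conclude, iterate \eqref{eq:3trrFreud} (equivalently apply \eqref{eq:3trrSn2} twice) to expand $x^4P_n$ in the basis $(P_k)_{n-4\le k\le n+4}$: the coefficient of $P_{n-2}$ is $\gamma_n\gamma_{n-1}\bigl(\gamma_{n+1}+\gamma_n+\gamma_{n-1}+\gamma_{n-2}\bigr)$, that of $P_{n-4}$ is $\gamma_n\gamma_{n-1}\gamma_{n-2}\gamma_{n-3}$, and no $P_{n-2j}$ with $j\ge3$ appears. Hence $\langle\bm u,G_nP_{n-2j}\rangle=0$ for $j\ge3$, and using $\gamma_k=h_k/h_{k-1}$ from \eqref{eq:gammah_m} one reads off $G_n=4\gamma_n\gamma_{n-1}(\gamma_{n+1}+\gamma_n+\gamma_{n-1}+\gamma_{n-2})P_{n-2}+4\gamma_n\gamma_{n-1}\gamma_{n-2}\gamma_{n-3}P_{n-4}$, which is \eqref{eq:Pnpunto}; the small‑$n$ instances are covered by the conventions $\gamma_0=\gamma_{-1}=\gamma_{-2}=0$. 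As a cross‑check one may instead route the argument through the scaling \eqref{eq:QnPn}: it rewrites $x\partial_xP_n+z\dot P_n-nP_n$ as $z^{n/4}$ times $z\dot Q_n+\tfrac34(x\partial_x-n)Q_n$, and the same endpoint cancellation for $\bm v$ gives $\langle\bm v,[\,z\dot Q_n+\tfrac34(x\partial_x-n)Q_n\,]Q_{n-2j}\rangle=4z\langle\bm v,x^4Q_nQ_{n-2j}\rangle$, which again truncates at $Q_{n-4}$. I expect the main obstacle to be precisely the bookkeeping in that integration‑by‑parts step — keeping track of the parity of $xP_nP_{n-2j}$ and verifying that the boundary term there is annihilated by the moving‑endpoint term coming from $\langle\dot{\bm u},P_nP_{n-2j}\rangle$; the rest is routine recurrence algebra.
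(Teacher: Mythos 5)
Your proof is correct, and it takes a genuinely different route from the paper. The paper passes to the rescaled polynomials $Q_n(x;z)$ of \eqref{eq:QnPn}, orthogonal with respect to $\bm{v}$ (weight $e^{-zx^4}$ on $[-z^{3/4},z^{3/4}]$), expands $\dot{Q}_n$ in the $Q_k$ with coefficients given by the Toda-type formula \eqref{eq:rnk}, computes $r_{n,n-2}$ and $r_{n,n-4}$ from the recurrence, and then undoes the scaling. You instead stay entirely in the $P_n$/$\bm{u}$ picture: you identify $G_n=x\partial_xP_n+z\dot{P}_n-nP_n$ as a polynomial of degree at most $n-2$ with the parity of $n$, and compute its Fourier coefficients against $\bm{u}$, showing that the boundary term $2zP_n(z;z)P_{n-2j}(z;z)e^{-z^4}$ produced by integrating $x\partial_xP_n$ by parts is exactly cancelled by the moving-endpoint contribution $z\langle\dot{\bm{u}},P_nP_{n-2j}\rangle$, leaving $4\langle\bm{u},x^4P_nP_{n-2j}\rangle$, which truncates at $P_{n-4}$ by two applications of \eqref{eq:3trrSn2}; the coefficients then follow from \eqref{eq:gammah_m}. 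Each step checks out (I verified the parity bookkeeping and the cancellation, e.g.\ at $n=2$ the identity reduces to $2\gamma_1-z\dot{\gamma}_1=4\gamma_1\gamma_2(\gamma_1+\gamma_2+\gamma_3)$, consistent with the Toda-type equations of the paper). Your approach buys something real here: the formula \eqref{eq:rnk} quoted by the paper is the standard one for a weight $e^{-zx^4}$ on a \emph{fixed} interval, whereas here the endpoints $\pm z^{3/4}$ also move with $z$ and generate boundary terms in $\langle\bm{v},\dot{Q}_nQ_k\rangle$ that \eqref{eq:rnk} does not display; your direct computation makes precisely this boundary bookkeeping explicit and shows the cancellation that justifies the final statement \eqref{eq:Pnpunto}. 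The concluding cross-check through $Q_n$ is consistent with this but is not needed; the main argument stands on its own.
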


% \begin{proposition}
%     The polynomials $Q_n(x;z)$ satisfy the differential property

%     \begin{equation}
%         \dot{Q}_n(x;z) = \frac{\gamma_n\gamma_{n-1}}{z^{3/2}}\biggl[\gamma_{n+1}+\gamma_n+\gamma_{n-1}+\gamma_{n-2}\biggr]Q_{n-2}(x;z)+\biggl[\frac{\gamma_n\gamma_{n-1}\gamma_{n-2}\gamma_{n-3}}{z^2}\biggr]Q_{n-4}(x;z).
%     \label{eq:Qpunto}
%     \end{equation}

%     where the dot $<\cdot>$ means derivative with respect to the variable $z$.
%     \end{proposition}

\begin{proof}
Since $\dot{Q}_n(x;z)$ is a polynomial of degree $n-2$ we can write

\begin{equation*}
\dot{Q}_n(x;z) = \displaystyle\sum_{k=0}^{n-2} r_{n,k}(z) Q_k(x;z),
\end{equation*}

where the coefficients $r_{n,k}(z)$ are given by (see \cite{IsmailMa2011,
WVAssche22})

\begin{equation}
r_{n,k}(z) = \frac{\bigl<\bm{v},x^4Q_nQ_k \big>}{||Q_k||^2} = \frac{1}{%
||Q_k||^2}\displaystyle\int_{-z^{3/4}}^{z^{3/4}} x^4
Q_n(x;z)Q_k(x;z)e^{-zx^4}dx.  \label{eq:rnk}
\end{equation}
Since the polynomials $Q_n(x;z)$ are orthogonal with respect to the linear
functional $\bm{v}$, we see that $r_{n,k}(z) = 0$ for all $0\leq k < n-4$.
Moreover, since $x^4$ is an even polynomial and the polynomials $Q_k(x;z)$
are symmetric we have $r_{n,k}(z) = 0$ if $n\equiv k \text{ mod(2)}$. Then
we conclude that 
\begin{equation*}
\dot{Q}_n(x;z) = r_{n,n-2}(z)Q_{n-2}(x;z) + r_{n,n-4}(z)Q_{n-4}(x;z).
\end{equation*}
From (\ref{eq:rnk}) and (\ref{eq:x^2Qn}), we have 
\begin{align}
||Q_{n-2}||^2 r_{n,n-2} = \bigl<\bm{v},x^4Q_nQ_{n-2}\bigr> = \frac{1}{\sqrt{z%
}}(\gamma_{n+1}+\gamma_n)||Q_n||^2+\frac{\gamma_n\gamma_{n-1}}{z^{3/2}}%
(\gamma_{n-1}+\gamma_{n-2}) ||Q_{n-2}||^2  \label{eq:rn-2}
\end{align}
and since $Q_n(x;z) = x^n + \mathcal{O}(x^{n-1})$ 
\begin{equation}
||Q_{n-4}||^2r_{n,n-4} = \bigl<\bm{v},x^4Q_nQ_{n-4}\bigr> = ||Q_n||^2.
\label{eq:rn-4}
\end{equation}
Using (\ref{eq:||Qn||/||Qn-1||}) in (\ref{eq:rn-2}) and (\ref{eq:rn-4}) we
get 
\begin{equation*}
\dot{Q}_n(x;z) = \frac{\gamma_n\gamma_{n-1}}{z^{3/2}}\biggl[%
\gamma_{n+1}+\gamma_n+\gamma_{n-1}+\gamma_{n-2}\biggr]Q_{n-2}(x;z)+\biggl[%
\frac{\gamma_n\gamma_{n-1}\gamma_{n-2}\gamma_{n-3}}{z^2}\biggr]Q_{n-4}(x;z).
\label{eq:Qpunto}
\end{equation*}
Thus, taking into account (\ref{eq:QnPn}) we obtain 
\begin{align*}
& -\frac{n}{4}z^{-n/4-1}P_n(z^{1/4}x;z) + z^{-n/4}\biggl[\frac{x}{4}%
z^{-3/2}\partial_yP_n(y;z)\biggr|_{y=z^{1/4}x} + \dot{P}_n(z^{1/4}x;z)\biggr]
\\
& = \frac{\gamma_n\gamma_{n-1}}{z^{3/2}}\left[\gamma_{n+1}+\gamma_n+%
\gamma_{n-1}+\gamma_{n-2}\right]z^{-\frac{n}{4}+\frac{1}{2}%
}P_{n-2}(z^{1/4}x;z) + \left[\frac{\gamma_n\gamma_{n-1}\gamma_{n-2}%
\gamma_{n-3}}{z^2}\right]z^{-\frac{n}{4}+1}P_{n-4}(z^{1/4}x;z).
\end{align*}
The result follows by multiplying both sides by $4z^{n/4+1}$ and changing $%
z^{1/4}x\longrightarrow x$.
\end{proof}

\begin{corollary}
Using (\ref{eq:3trrFreud}) we can reduce (\ref{eq:Pnpunto}) to 
\begin{align}
-&nP_n(x;z) + x\partial_xP_n(x;z) + z\dot{P}_n(x;z) +4\gamma_n(z)\biggl[%
\gamma_{n+1}(z)+\gamma_n(z)+\gamma_{n-1}(z)+x^2\biggr]P_n(x;z)  \notag \\
&= 4\gamma_n(z)\biggl[\gamma_{n+1}(z)+\gamma_n(z)+x^2\biggr]xP_{n-1}(x;z).
\label{eq:L1z}
\end{align}
Then the differential operator 
\begin{equation*}
\mathcal{L}_n = x\partial_x +z\partial_z+4\gamma_n\biggl[\gamma_{n+1}+%
\gamma_n+\gamma_{n-1}+x^2\biggr]-n.
\end{equation*}
satisfies 
\begin{equation*}
\mathcal{L}_nP_n(x;z) = 4\gamma_n\biggl[\gamma_{n+1}+\gamma_n+x^2\biggr]%
xP_{n-1}(x;z).
\end{equation*}
\end{corollary}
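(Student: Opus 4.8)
The starting point is the relation \eqref{eq:Pnpunto} obtained in the previous Proposition; the plan is to eliminate the two polynomials $P_{n-2}(x;z)$ and $P_{n-4}(x;z)$ appearing on its right-hand side in favour of $P_n(x;z)$ and $xP_{n-1}(x;z)$, using nothing more than the three-term recurrence \eqref{eq:3trrFreud} and its once-iterated form \eqref{eq:3trrSn2}. First I would factor the common coefficient $4\gamma_n\gamma_{n-1}$ out of the right-hand side of \eqref{eq:Pnpunto}, leaving the bracket
\[
(\gamma_{n+1}+\gamma_n+\gamma_{n-1}+\gamma_{n-2})P_{n-2}+\gamma_{n-2}\gamma_{n-3}P_{n-4}.
\]
The shifted recurrence \eqref{eq:3trrSn2} with $n\mapsto n-2$ reads $x^2P_{n-2}=P_n+(\gamma_{n-1}+\gamma_{n-2})P_{n-2}+\gamma_{n-2}\gamma_{n-3}P_{n-4}$, so this bracket collapses to $(\gamma_{n+1}+\gamma_n+x^2)P_{n-2}-P_n$. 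Hence the right-hand side of \eqref{eq:Pnpunto} equals $4\gamma_n(\gamma_{n+1}+\gamma_n+x^2)\gamma_{n-1}P_{n-2}-4\gamma_n\gamma_{n-1}P_n$.

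The second step is to apply \eqref{eq:3trrFreud} in the form $\gamma_{n-1}P_{n-2}=xP_{n-1}-P_n$ to the remaining $P_{n-2}$, which turns the expression into
\[
4\gamma_n(\gamma_{n+1}+\gamma_n+x^2)\,xP_{n-1}-4\gamma_n(\gamma_{n+1}+\gamma_n+\gamma_{n-1}+x^2)P_n.
\]
Transferring the last term to the left-hand side of \eqref{eq:Pnpunto} produces exactly \eqref{eq:L1z}. From \eqref{eq:L1z} the operator statement is immediate: since the derivative-in-$z$ term enters as $z\dot P_n=z\partial_zP_n$, the left-hand side is precisely $\mathcal{L}_nP_n(x;z)$ with $\mathcal{L}_n=x\partial_x+z\partial_z+4\gamma_n[\gamma_{n+1}+\gamma_n+\gamma_{n-1}+x^2]-n$, and the identity $\mathcal{L}_nP_n(x;z)=4\gamma_n[\gamma_{n+1}+\gamma_n+x^2]xP_{n-1}(x;z)$ follows.

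The computation is purely algebraic and presents no real obstacle; the only point requiring a little care is the order of the two reductions. One must first use \eqref{eq:3trrSn2} to absorb the $P_{n-4}$ term together with part of the $P_{n-2}$ coefficient, and only afterwards use \eqref{eq:3trrFreud} on the leftover $\gamma_{n-1}P_{n-2}$; performing the substitutions in the reverse order reintroduces an odd-indexed polynomial $P_{n-3}$ and the right-hand side fails to close on $\{P_n, xP_{n-1}\}$. I would therefore carry out the steps in exactly that sequence, and, since \eqref{eq:3trrSn2} is itself just an iteration of \eqref{eq:3trrFreud}, the whole reduction indeed rests only on the three-term recurrence, as the statement asserts.
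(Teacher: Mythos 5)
Your reduction is correct and is exactly the route the paper intends: the paper states the corollary without writing out the computation, merely noting that \eqref{eq:L1z} follows from \eqref{eq:Pnpunto} via the three-term recurrence, and your two-step elimination (first absorbing $P_{n-4}$ with the shifted \eqref{eq:3trrSn2}, then replacing $\gamma_{n-1}P_{n-2}$ by $xP_{n-1}-P_n$) fills in that algebra correctly. The operator reformulation then follows immediately from $z\dot P_n = z\partial_z P_n$, as you say.
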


Now, we will deduce a differential equation which is satisfied by the zeros
of $P_n(x;z)$.

\begin{proposition}
Let $(x_{n,k})_{1\leq k\leq n}$ be the zeros of $P_n(x;z)$. Then they
satisfy the differential equation 
\begin{equation*}
\dot{x}_{n,k}(z)=\frac{x_{n,k}(z)}{z}\biggl[\frac{\mathcal{A}_n(z;z)}{%
\mathcal{A}_n(x_{n,k}(z);z)}\biggr],
\end{equation*}
where the polynomial $\mathcal{A}_n(x;z)$ was defined in (\ref{eq:AnFreud}).
\end{proposition}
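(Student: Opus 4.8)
The plan is to obtain the formula by implicit differentiation of the defining identity $P_n(x_{n,k}(z);z)=0$ with respect to $z$, and then to convert the resulting expression into the stated closed form by eliminating the derivatives $\dot P_n$ and $P_{n-1}$ in favour of $\partial_x P_n$. Using the dot notation for $\partial_z$ as above, the chain rule gives $\partial_x P_n(x_{n,k};z)\,\dot x_{n,k}+\dot P_n(x_{n,k};z)=0$. Since $\bm{u}$ is positive-definite, its orthogonal polynomials have simple real zeros, so $\partial_x P_n(x_{n,k};z)\neq 0$ and
\begin{equation*}
\dot x_{n,k}(z)=-\frac{\dot P_n(x_{n,k}(z);z)}{\partial_x P_n(x_{n,k}(z);z)} .
\end{equation*}
Thus everything reduces to expressing $\dot P_n$ at a zero of $P_n$ through $\partial_x P_n$ at the same point.

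Next I would evaluate the differential recurrence of the preceding Corollary, equation (\ref{eq:L1z}), at $x=x_{n,k}$. Because $P_n(x_{n,k};z)=0$, the two terms carrying $P_n$ disappear and one is left with
\begin{equation*}
x_{n,k}\,\partial_x P_n(x_{n,k};z)+z\,\dot P_n(x_{n,k};z)=4\gamma_n(z)\bigl(\gamma_{n+1}(z)+\gamma_n(z)+x_{n,k}^2\bigr)x_{n,k}\,P_{n-1}(x_{n,k};z).
\end{equation*}
To remove the unwanted $P_{n-1}$ I would use the structure relation (\ref{eq:diffAnBnFreud}), again evaluated at $x=x_{n,k}$: since $P_n$ vanishes there, $\phi(x_{n,k};z)\,\partial_x P_n(x_{n,k};z)=4\gamma_n(z)\,\mathcal{A}_n(x_{n,k};z)\,P_{n-1}(x_{n,k};z)$. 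One checks $\mathcal{A}_n(x_{n,k};z)\neq 0$: otherwise this identity would force $\phi(x_{n,k};z)\,\partial_x P_n(x_{n,k};z)=0$, but $\phi(x_{n,k};z)=x_{n,k}^2-z^2<0$ since the zeros lie in $(-z,z)$, contradicting simplicity of the zeros. Hence $4\gamma_n(z)\,P_{n-1}(x_{n,k};z)=\phi(x_{n,k};z)\,\partial_x P_n(x_{n,k};z)\big/\mathcal{A}_n(x_{n,k};z)$.

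Substituting this into the previous display gives
\begin{equation*}
z\,\dot P_n(x_{n,k};z)=x_{n,k}\,\partial_x P_n(x_{n,k};z)\left[\frac{\bigl(\gamma_{n+1}(z)+\gamma_n(z)+x_{n,k}^2\bigr)\phi(x_{n,k};z)}{\mathcal{A}_n(x_{n,k};z)}-1\right].
\end{equation*}
The last ingredient is an algebraic identity read off from the definition (\ref{eq:AnFreud}): because $\phi(x;z)=x^2-z^2$, the combination $\mathcal{A}_n(x;z)-\bigl(x^2+\gamma_{n+1}(z)+\gamma_n(z)\bigr)\phi(x;z)$ is independent of $x$, and evaluating at $x=z$, where $\phi(z;z)=0$, shows it equals $\mathcal{A}_n(z;z)$. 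Therefore $\bigl(\gamma_{n+1}+\gamma_n+x_{n,k}^2\bigr)\phi(x_{n,k};z)-\mathcal{A}_n(x_{n,k};z)=-\mathcal{A}_n(z;z)$, so $z\,\dot P_n(x_{n,k};z)=-x_{n,k}\,\partial_x P_n(x_{n,k};z)\,\mathcal{A}_n(z;z)\big/\mathcal{A}_n(x_{n,k};z)$; dividing by $-\partial_x P_n(x_{n,k};z)$ and by $z>0$ yields the claimed equation. I do not anticipate a genuine obstacle here: the computation is short, and the only points needing a word of justification are the two non-vanishing facts ($\partial_x P_n\neq 0$ and $\mathcal{A}_n(x_{n,k};z)\neq 0$ at a zero, both from simplicity of the zeros) and the cancellation of the $x$-dependent parts of $\mathcal{A}_n(x;z)$ and $(x^2+\gamma_{n+1}+\gamma_n)\phi(x;z)$.
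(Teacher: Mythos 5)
Your proposal is correct and follows essentially the same route as the paper: implicit differentiation of $P_n(x_{n,k};z)=0$, evaluation of the differential recurrence \eqref{eq:L1z} and of the structure relation \eqref{eq:diffAnBnFreud} (equivalently \eqref{eq:L1Pn}) at the zero to eliminate $\dot P_n$ and $P_{n-1}$, and the observation from \eqref{eq:AnFreud} that $\mathcal{A}_n(x;z)-(x^2+\gamma_{n+1}+\gamma_n)\phi(x;z)$ is independent of $x$ and equals $\mathcal{A}_n(z;z)$. Your added justifications that $\partial_x P_n$ and $\mathcal{A}_n$ do not vanish at the zeros are correct and slightly more careful than the paper's presentation.
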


\begin{proof}
Taking derivatives with respect to $z$ in (\ref{eq:Pn(xnk)})

\begin{equation*}
\partial_xP_n(x;z)\biggr|_{x=x_{n,k}}\dot{x}_{n,k}+\dot{P}_n(x_{n,k};z) = 0.
\end{equation*}
Then 
\begin{equation}
\dot{x}_{n,k} = -\left[\frac{\dot{P}_n(x;z)}{\partial_xP_n(x;z)}\right]%
_{x=x_{n,k}}.  \label{eq:xnkpunto}
\end{equation}
From (\ref{eq:L1Pn}) we have 
\begin{equation}
\partial_xP_n(x,z)\biggl|_{x=x_{n,k}} = \frac{4\gamma_n(z)\mathcal{A}%
_n(x_{n,k},z)}{\phi(x_{n,k},z)}P_{n-1}(x_{n,k},z).  \label{eq:L1Pnxnk}
\end{equation}
On the other hand, from (\ref{eq:L1z}) we get 
\begin{equation}
\dot{P}_n(x_{n,k},z) = \frac{x_{n,k}}{z}\biggl[4\gamma_n(\gamma_{n+1}+%
\gamma_n+x^2_{n,k})-\partial_xP_n(x_{n,k};z)\biggr]P_{n-1}(x_{n,k};z).
\label{eq:Pnpuntoxnk}
\end{equation}
Using (\ref{eq:L1Pnxnk}) and (\ref{eq:Pnpuntoxnk}) in (\ref{eq:xnkpunto}) we
obtain

\begin{equation*}
\dot{x}_{n,k} = \frac{x_{n,k}}{z\mathcal{A}_n(x_{n,k};z)}\biggl[\mathcal{A}%
_n(x_{n,k};z)-\phi(x_{n,k};z)(\gamma_{n+1}+\gamma_n+x^2_{n,k})\biggr].
\end{equation*}
From $(\ref{eq:AnFreud})$ our statement follows.
\end{proof}

\section{Numerical examples}

In this section, we derive and analyze the zeros of the symmetric truncated Freud polynomials $\{P_{n}(x;z)\}_{n\geq 0}$, along with other related quantities. The zeros of these polynomials exhibit different behavior depending on whether the degree $n$ is even or odd. Using Mathematica$^{\copyright}$ software, we illustrate the variation of these zeros as a function of the parameter $z$ for certain truncated Freud polynomials of fixed degree. Specifically, in Figure \ref{fig:Freud5}, we present the last zero of the truncated Freud polynomial $P_{5}(x;z)$, of degree five, across a range of values for $z$ from $z=0.2$ up to infinity. These zeros are presented numerically in Table \ref{tab:FreudZeros5}. Specifically, in Figure \ref{fig:Freud5}, we present the last zero of the truncated Freud polynomial $P_{5}(x;z)$, of degree five, across a range of values for $z$ from $z=0.2$ up to infinity. These zeros are presented numerically in Table \ref{tab:FreudZeros5}. 

Following this, Figure \ref{fig:Freud6} illustrates a similar situation for the degree-six polynomial $P_{6}(x;z)$, with $z$ values extending from close to zero to infinity, and the last two zeros of this polynomial are presented in Table \ref{tab:FreudZeros6}. It is worth noting that, at least in these low-degree polynomials, the zeros for $z\approx 2$ are already very close to the zeros of Freud polynomials, corresponding to $z\rightarrow \infty $. Additionally, the bottom panels of Figures \ref{fig:Freud5} and \ref{fig:Freud6} provide a zoomed-in view near the $x$-axis, allowing us to closely observe the graphs of the polynomials with smaller values of $z$, those with the dotted and dashed graphs, in greater detail. 

As in the previous sections, the zeros of the polynomial $P_{n}(x;z)$ are denoted by $x_{n,k}$. In the following tables, we present the last zeros of polynomials $P_{5}(x;z)$ and $P_{6}(x;z)$.

%%%%%%%%%%%%%%%%%%%%%%%%%%%%%%%%%%%%%%%%%%%%%%%%%%%%%%%%%%%%%%%%%%%%%%%%%%%%%%%%%%%%%%%%%%%%%%%%%%

%%%%%%%%%%%%%%%%%%%%%%%%%%%%%%%%%%%%%%%%%%%%%%%%%%%%%%%%%%%%%%%%%%%%%%%%%%%%%%%%%%%%%%%%%%%%%%%%%%

\begin{table}[!ht]
	\centering\renewcommand{\arraystretch}{1.2} 
	\begin{tabular}{|c|c|c|}
		\hline
		$z$ & $x_{5,4}$ & $x_{5,5}$ \\ \hline
		$0.2$ & 0.107685 & 0.181230 \\ \hline
		$0.4$ & 0.215105 & 0.362271 \\ \hline
		$0.6$ & 0.320950 & 0.542169 \\ \hline
		$0.9$ & 0.468896 & 0.803263 \\ \hline
		$1.2$ & 0.584567 & 1.029070 \\ \hline
		$1.4$ & 0.631505 & 1.130200 \\ \hline
		$1.5$ & 0.644491 & 1.158470 \\ \hline
		$\infty$ & 0.655248 & 1.180460 \\ \hline
	\end{tabular}%
	\caption{The two largest zeros of $P_{5}(x;z) $ for several values of $z$.}
	\label{tab:FreudZeros5}
\end{table}

%%%%%%%%%%%%%%%%%%%%%%%%%%%%%%%%%%%%%%%%%%%%%%%%%%%%%%%%%%%%%%%%%%%%%%%%%%%%%%%%%%%%%%%%%%%%%%%%%%

%%%%%%%%%%%%%%%%%%%%%%%%%%%%%%%%%%%%%%%%%%%%%%%%%%%%%%%%%%%%%%%%%%%%%%%%%%%%%%%%%%%%%%%%%%%%%%%%%%

\begin{figure}[!ht]
	\centering
	\includegraphics[width=0.7\textwidth]{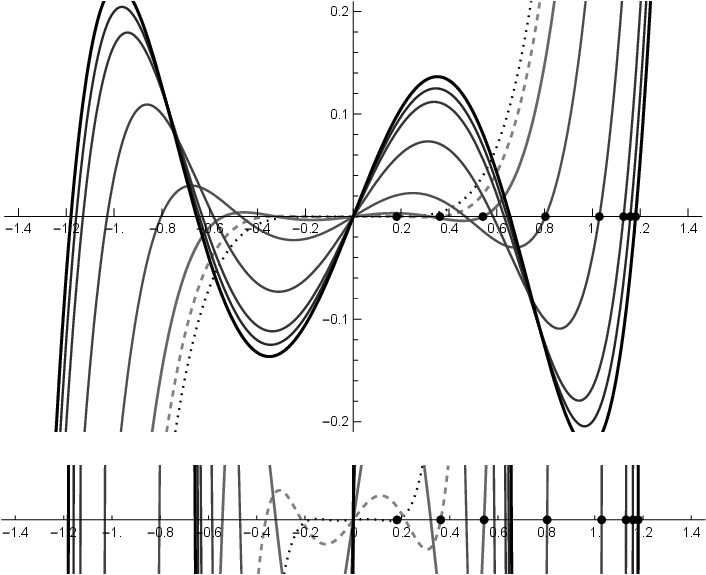}
	\caption{Behavior of the last zero of $P_{5}(x;z)$ for several values of $z$.}
	\label{fig:Freud5}
\end{figure}

%%%%%%%%%%%%%%%%%%%%%%%%%%%%%%%%%%%%%%%%%%%%%%%%%%%%%%%%%%%%%%%%%%%%%%%%%%%%%%%%%%%%%%%%%%%%%%%%%%

\bigskip

%%%%%%%%%%%%%%%%%%%%%%%%%%%%%%%%%%%%%%%%%%%%%%%%%%%%%%%%%%%%%%%%%%%%%%%%%%%%%%%%%%%%%%%%%%%%%%%%%%

\begin{table}[!ht]
	\centering\renewcommand{\arraystretch}{1.2} 
	\begin{tabular}{|c|c|c|}
		\hline
		$z$ & $x_{6,5}$ & $x_{6,6}$ \\ \hline
		$0.3$    & 0.1982974 & 0.2797096 \\ \hline
		$0.4$    & 0.2642083 & 0.3728558 \\ \hline
		$0.65$   & 0.4266802 & 0.6045856 \\ \hline
		$0.9$    & 0.5795190 & 0.8310874 \\ \hline 
		$1.2$    & 0.7308850 & 1.0794365 \\ \hline
		$1.4$    & 0.7984810 & 1.2066653 \\ \hline	
		$1.5$    & 0.8196970 & 1.2490734 \\ \hline
		$\infty$ & 0.8415723 & 1.2914650 \\ \hline
	\end{tabular}%
	\caption{The two largest zeros of $P_{6}(x;z)$ for various values of $z$.}
	\label{tab:FreudZeros6}
\end{table}

%%%%%%%%%%%%%%%%%%%%%%%%%%%%%%%%%%%%%%%%%%%%%%%%%%%%%%%%%%%%%%%%%%%%%%%%%%%%%%%%%%%%%%%%%%%%%%%%%%

%%%%%%%%%%%%%%%%%%%%%%%%%%%%%%%%%%%%%%%%%%%%%%%%%%%%%%%%%%%%%%%%%%%%%%%%%%%%%%%%%%%%%%%%%%%%%%%%%%

\begin{figure}[!ht]
	\centering
	\includegraphics[width=0.7\textwidth]{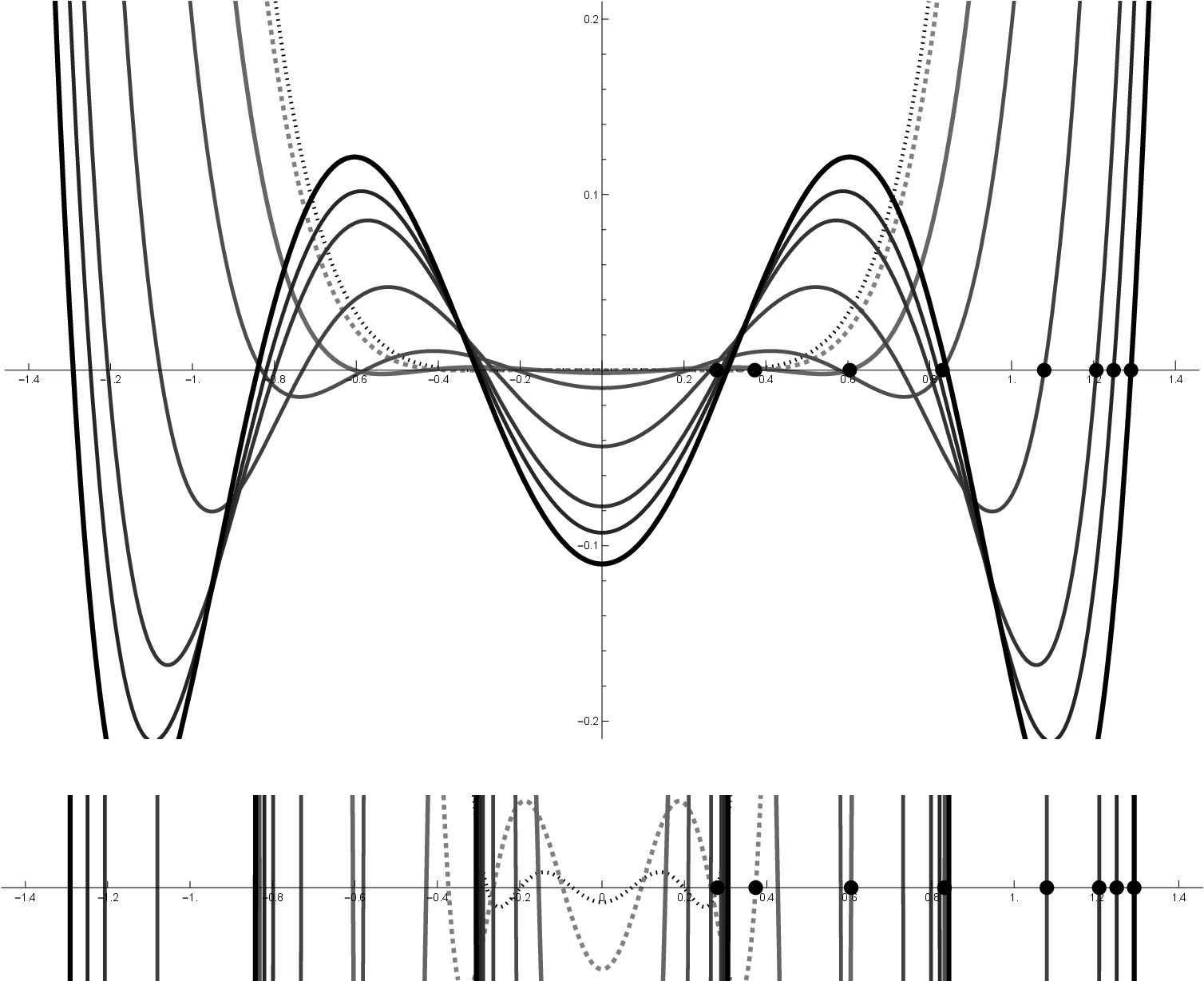}
	\caption{Behavior of the last zero of $P_{6}(x;z)$ for several values of $z$. }
	\label{fig:Freud6}
\end{figure}

%%%%%%%%%%%%%%%%%%%%%%%%%%%%%%%%%%%%%%%%%%%%%%%%%%%%%%%%%%%%%%%%%%%%%%%%%%%%%%%%%%%%%%%%%%%%%%%%%%

%%%%%%%%%%%%%%%%%%%%%%%%%%%%%%%%%%%%%%%%%%%%%%%%%%%%%%%%%%%%%%%%%%%%%%%%%%%%%%%%%%%%%%%%%%%%%%%%%%

In the following, we will study the zeros of a polynomial of interest due to its electrostatic interpretation, as developed in Section \ref%
{sect-Electr-Int}. Equation (\ref{eq:GE(xnk)}) there represents the electrostatic equilibrium condition for the $n$ real zeros $\{x_{n,k}\}_{k=1}^{n}$, of the truncated Freud polynomial $P_{n}(x;z)$, so therefore, the zeros of $P_{n}(x;z)$ represent the critical points of the total energy. In this way, the electrostatic interpretation of the distribution of the $n$ zeros, suggests an equilibrium configuration under the action of the external potential given in (\ref{totalExtPoten}), where the first term $V_{long}(x)$ is a long-range potential tied to a Christoffel perturbation of the Freud weight function, and $V_{n,short}(x;z)$ represents a short-range potential associated with unit charges located at the four zeros of the bi-quartic equation (\ref{bicuartica})%

\begin{equation*}
	\mathcal{A}_{n}(x;z)=x^{4}+b_{n}(z)x^{2}+c_{n}(z).
\end{equation*}%
We define in (\ref{etas})-(\ref{zetas}) the solutions of the above equation, namely
\begin{eqnarray*}
	\eta _{1,2}(n,z) &=&\pm \sqrt{\frac{1}{2}\left( \sqrt{b_{n}^{2}(z)-4c_{n}(z)}%
		-b_{n}(z)\right) }, \\
	\zeta _{1,2}(n,z) &=&\pm i\sqrt{\frac{1}{2}\left( \sqrt{%
			b_{n}^{2}(z)-4c_{n}(z)}+b_{n}(z)\right) }.
\end{eqnarray*}

Table \ref{Tabla3} shows the zeros of $\mathcal{A}_{n}(x;z)$ for a fixed values of $z=1$ and different increasing values of $n$. As $n$ increases, it is easy to see that $\mathcal{A}_{n}(x;z)$ exhibits two symmetric real zeros $\eta_{1}(n,z)$ and $\eta_{2}(n,z)$, along with two additional conjugate purely imaginary zeros $\zeta _{1}(n,z)$ and $\zeta _{2}(n,z)$. These zeros are represented in the complex plane in Figure \ref{fig:electzeros}.

%%%%%%%%%%%%%%%%%%%%%%%%%%%%%%%%%%%%%%%%%%%%%%%%%%%%%%%%%%%%%%%%%%%%%%%%%%%%%%%%%%%%%%%%%%%%%%%%%%

%%%%%%%%%%%%%%%%%%%%%%%%%%%%%%%%%%%%%%%%%%%%%%%%%%%%%%%%%%%%%%%%%%%%%%%%%%%%%%%%%%%%%%%%%%%%%%%%%%

\begin{figure}[!ht]
	\centerline{\includegraphics[width=9cm,keepaspectratio]{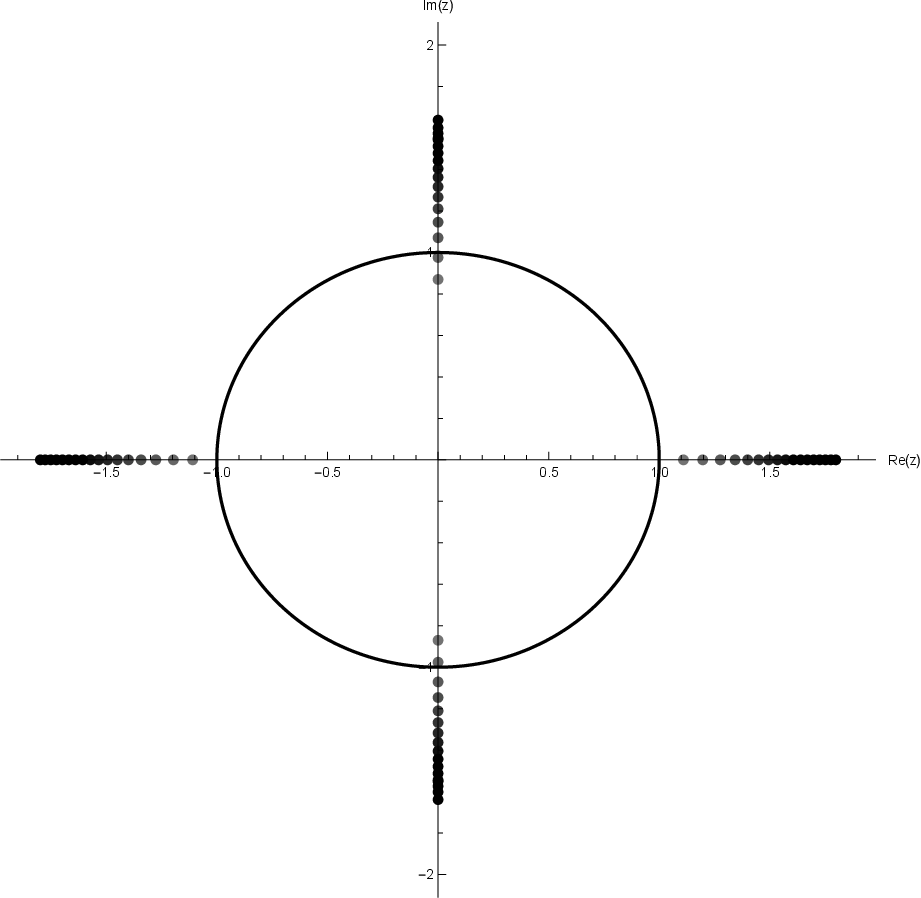}}
	\caption{Zeros of $\mathcal{A}_{n}(x;z)$ for $z=1$ and values of $n$ from $1$ to $17$.}
	\label{fig:electzeros}
\end{figure}

%%%%%%%%%%%%%%%%%%%%%%%%%%%%%%%%%%%%%%%%%%%%%%%%%%%%%%%%%%%%%%%%%%%%%%%%%%%%%%%%%%%%%%%%%%%%%%%%%%

%%%%%%%%%%%%%%%%%%%%%%%%%%%%%%%%%%%%%%%%%%%%%%%%%%%%%%%%%%%%%%%%%%%%%%%%%%%%%%%%%%%%%%%%%%%%%%%%%%

\begin{table}[!ht]
	\centering\renewcommand{\arraystretch}{1.2} {\small 
		\begin{tabular}{lcccc}
			\hline
%			& \multicolumn{2}{c}{$\pm \sqrt{z_{1}}$} & \multicolumn{2}{c}{$\pm \sqrt{z_{2}}$} \\ \cline{2-3}\cline{4-5}
			$n$ & $\eta_{1}(n,z)$ & $\zeta_{1}(n,z)$ & $\eta_{2}(n,z)$ & $\zeta_{2}(n,z)$ \\ \hline
			$n=1$ & $+ \,1.10947$ & $+ \,0.870003\,i$ & $- \,1.10947$ & $- \,0.870003\,i$ \\ 
			$n=2$ & $+ \,1.19659$ & $+ \,0.975701\,i$ & $- \,1.19659$ & $- \,0.975701\,i$ \\ 
			$n=3$ & $+ \,1.27583$ & $+ \,1.07093\,i$ & $- \,1.27583$ & $- \,1.07093\,i$ \\ 
			$n=4$ & $+ \,1.34277$ & $+ \,1.14612\,i$ & $- \,1.34277$ & $- \,1.14612\,i$ \\ 
			$n=5$ & $+ \,1.3997$ & $+ \,1.2106\,i$ & $- \,1.3997$ & $- \,1.2106\,i$ \\ 
			$n=6$ & $+ \,1.44951$ & $+ \,1.26696\,i$ & $- \,1.44951$ & $- \,1.26696\,i$ \\ 
			$n=7$ & $+ \,1.49408$ & $+ \,1.31726\,i$ & $- \,1.49408$ & $- \,1.31726\,i$ \\ 
			$n=8$ & $+ \,1.53462$ & $+ \,1.3628\,i$ & $- \,1.53462$ & $- \,1.3628\,i$ \\ 
			$n=9$ & $+ \,1.57192$ & $+ \,1.40449\,i$ & $- \,1.57192$ & $- \,1.40449\,i$ \\ 
			$n=10$ & $+ \,1.60653$ & $+ \,1.44302\,i$ & $- \,1.60653$ & $- \,1.44302\,i$ \\ 
			$n=11$ & $+ \,1.6389$ & $+ \,1.47888\,i$ & $- \,1.6389$ & $- \,1.47888\,i$ \\ 
			$n=12$ & $+ \,1.66932$ & $+ \,1.51246\,i$ & $- \,1.66932$ & $- \,1.51246\,i$ \\ 
			$n=13$ & $+ \,1.69806$ & $+ \,1.54408\,i$ & $- \,1.69806$ & $- \,1.54408\,i$ \\ 
			$n=14$ & $+ \,1.72533$ & $+ \,1.57404\,i$ & $- \,1.72533$ & $- \,1.57404\,i$ \\ 
			$n=15$ & $+ \,1.75127$ & $+ \,1.60162\,i$ & $- \,1.75127$ & $- \,1.60162\,i$ \\ 
			$n=16$ & $+ \,1.77603$ & $+ \,1.63848\,i$ & $- \,1.77603$ & $- \,1.63848\,i$ \\ 
			$n=17$ & $+ \,1.79819$ & $+ \,1.55118\,i$ & $- \,1.79819$ & $- \,1.55118\,i$ \\ \hline
		\end{tabular}%
	}
	\caption{Zeros $\protect\eta_{1,2}(n,z)$ and $\protect\zeta_{1,2}(n,z)$ for several values of $n$.}
	\label{Tabla3}
\end{table}

%%%%%%%%%%%%%%%%%%%%%%%%%%%%%%%%%%%%%%%%%%%%%%%%%%%%%%%%%%%%%%%%%%%%%%%%%%%%%%%%%%%%%%%%%%%%%%%%%%

%%%%%%%%%%%%%%%%%%%%%%%%%%%%%%%%%%%%%%%%%%%%%%%%%%%%%%%%%%%%%%%%%%%%%%%%%%%%%%%%%%%%%%%%%%%%%%%%%%

\section{Concluding remarks}

We have defined the truncated Freud polynomials $P_n(x;z)$, orthogonal with
respect to the linear functional 
\begin{equation*}
\bigl<\bm{u} , p(x) \bigr> = \displaystyle\int_{-z}^z p(x)e^{-x^4}dx, \quad
p\in \mathbb{P}, \quad z>0.
\end{equation*}
Such a linear functional satisfies the Pearson equation

\begin{equation*}
D((x^2-z^2)\bm{u})+(4x^3(x^2-z^2)-2x)\bm{u} = 0.
\end{equation*}
We have studied the MOPS $P_n(x;z)$ as semiclassical of class $4$. For this
sequence, we deduce the Laguerre-Freud equations, the first structure
relation, the lowering and raising operators as well as the corresponding
holonomic differential equation (in $x$) that it satisfies. As an
application, the dynamical behaviour and the electrostatic interpretation of
the zeros of such a sequence were derived. Finally, some illustrative
numerical tests concerning the zeros of the polynomials $P_n(x;z)$ are
given. In a further contribution we want to deal with the asymptotic
expansion of $P_n(x;z)$ as $n\rightarrow \infty$, $z\rightarrow \infty$ as
well as when both $n,z\rightarrow \infty$.

\subsection*{Acknowledgments}

The work of Francisco Marcell\'an has been supported by the research project
PID2021- 122154NB-I00 \emph{Ortogonalidad y Aproximaci\'on con Aplicaciones
en Machine Learning y Teor\'ia de la Probabilidad} funded by
MICIU-AEI-10.13039-501100011033 and by \emph{ERDF A Way of making Europe}.

The work of Edmundo J. Huertas and Alberto Lastra has been supported by the Ministerio de Ciencia e Innovaci\'on-Agencia Estatal de Investigaci\'on MCIN/AEI/10.13039/501100011033 and the European Union \textquotedblleft NextGenerationEU\textquotedblright /PRTR, under grant TED2021-129813A-I00. The work of Alberto Lastra is also partially supported by the project PID2019-105621GB-I00 of Ministerio de Ciencia e Innovaci\'on, Spain.

Part of this research was conducted while Edmundo J. Huertas was visiting the ICMAT (Instituto de Ciencias Matem\'aticas), from jan-2023 to jan-2024 under the Program \textit{Ayudas de Recualificaci\'on del Sistema Universitario Espa\~{n}ol para 2021-2023 (Convocatoria 2022) - R.D. 289/2021 de 20 de abril (BOE de 4 de junio de 2021)}. This author wish to thank the ICMAT, Universidad de Alcal\'a, and the
Plan de Recuperaci\'on, Transformaci\'on y Resiliencia (NextGenerationEU) of the
Spanish Government for their support.

%\newpage

% \section{The variable $z$}
% In this section we study $\bm{u}_{2n}$, $\gamma_{n}$ and the zeros of the polynomials $P_n(x;z)$ as functions of $z$. Setting $x=zt$ the moments read

% \begin{equation}
%     \bm{u}_{2n}(z) = \displaystyle\int_{-z}^zx^{2n}e^{-x^4}dx = z^{2n+1}\displaystyle\int_{-1}^1 t^{2n}e^{-z^4t^4}dx\label{eq:u'(z)}
% \end{equation}
% and we have the following result .

% \begin{proposition}
%     The sequence of even moments $\bm{u}_{2n}$ associated with the linear functional (\ref{eq:FreudLF}) satisfies the differential-difference relation
%     \begin{equation*}
%         \bm{u}'_{2n+2}=z^2\bm{u}'_{2n}
%     \end{equation*}

% \end{proposition}
% \begin{proof}
%     Taking the derivative with respect to $z$ in (\ref{eq:u'(z)}) gives
%     \begin{equation*}
%         z\bm{u}_{2n}' = (2n+1)\bm{u}_{2n}-4\bm{u}_{2n+4}.
%     \end{equation*}
%     Taking into account that
%     \begin{equation*}
%         \frac{d}{dz}\bigl(\phi x^{2n}\bigr)
%     \end{equation*}
% \end{proof}

\end{document}